\newtheorem{theorem}{Theorem}[section]
\newtheorem{lemma}[theorem]{Lemma}
\newtheorem{corollary}[theorem]{Corollary}
\newtheorem{question}[theorem]{Question}
\newtheorem{example}[theorem]{Example}
\theoremstyle{definition}
\newtheorem{definition}[theorem]{Definition}
\newtheorem{proposition}[theorem]{Proposition}
\theoremstyle{remark}
\begin{document}

\title[Hyperspace of finite unions of convergent sequences]
{Hyperspace of finite unions of convergent sequences}

\author{JingLing Lin}
\address{(JingLing Lin): School of mathematics and statistics,
Minnan Normal University, Zhangzhou 363000, P. R. China}
\email{jinglinglin1995@163.com}

\author{Fucai Lin*}
\address{(Fucai Lin): 1. School of mathematics and statistics,
Minnan Normal University, Zhangzhou 363000, P. R. China; 2. Fujian Key Laboratory of Granular Computing and Applications,
Minnan Normal University, Zhangzhou 363000, P. R. China}
\email{linfucai@mnnu.edu.cn; linfucai2008@aliyun.com}

\author{Chuan Liu}
\address{(Chuan Liu): Department of Mathematics,
Ohio University Zanesville Campus, Zanesville, OH 43701, USA}
\email{liuc1@ohio.edu}

\dedicatory{Dedicated to professor Jinjin Li on the occasion of his 60 years anniversary.}

\thanks{The second author is  supported by the Key Program of the Natural Science Foundation of Fujian Province (No: 2020J02043), the NSFC (No. 11571158), the Program for New Century Excellent Talents in Fujian Province University, the Institute of Meteorological Big Data-Digital Fujian and Fujian Key Laboratory of Data Science and Statistics. The first author is supported by the Young and middle-aged project in Fujian Province (No. JAT190397).\\
*corresponding author}

\keywords{hyperspace; rank $k$-diagonal; convergent sequence; $sof$-countability; $snf$-countability; $csf$-countability; network; $\gamma$-space.}
\subjclass[2020]{Primary 54A20; secondary 40B99, 54A25, 54B20, 54D20, 54D25, 54E20.}

\begin{abstract}
The symbol $\mathcal{S}(X)$ denotes the hyperspace of finite unions of convergent sequences in a
Hausdorff space $X$. This hyperspace is endowed with the Vietoris topology. First of all, we give a characterization of convergent sequence in $\mathcal{S}(X)$. Then we consider some cardinal invariants on $\mathcal{S}(X)$, and compare the character, the pseudocharacter, the $sn$-character, the $so$-character, the network weight and $cs$-network weight of $\mathcal{S}(X)$ with
the corresponding cardinal function of $X$. Moreover, we consider rank $k$-diagonal on $\mathcal{S}(X)$, and give a space $X$ with a rank 2-diagonal such that $S(X)$ does not have any $G_{\delta}$-diagonal. Further, we study the relations of some generalized metric properties of $X$ and its hyperspace $\mathcal{S}(X)$.
Finally, we pose some questions about the hyperspace $\mathcal{S}(X)$.
\end{abstract}

\maketitle
\section{Introduction}
The hyperspaces were originally studied by Vietoris in 1920s. In the study of hyperspaces, ones mainly discussed some special nonempty subsets, such as closed subsets, compact subsets, finite subsets and so on, see \cite{CG, KB, M, TM,TM1,TM1997,SA,IN, LX,TZ}. It is well known that each sequential space is determined by their nontrivial convergent sequences. In 2015, the authors in \cite{GO} first introduced hyperspaces consisting of nontrivial convergent sequences, which is denoted by $S_{c}(X)$. In \cite{DPR}, the authors pointed out that the convergence of sequences is an important tool to determine the topological properties in Hausdorff
spaces, and the study of hyperspaces can provide information about the topological
behavior of the original space and vice versa. In this paper, we mainly consider the hyperspace $\mathcal{S}(X)$ of finite unions of convergent sequences in a Hausdorff space $X$ which contains the hyperspace consisting of non-trivial convergent sequences and the hyperspace consisting of finite subsets.

Given a space $X$, we define its hyperspaces as the following sets:

\smallskip
$2^{X}=\{A\subset X : A\ \mbox{is nonempty and closed in}\ X\}$;

\smallskip
$\mathcal{K}(X)=\{A\in 2^{X}: A\ \mbox{is compact in}\ X\};$

\smallskip
$\mathcal{F}_{n}(X)=\{F\in 2^{X}: F\ \mbox{is a finite subset of}\ X\};$

\smallskip
$\mathcal{S}(X)=\{S\in 2^{X}: S\ \mbox{is the unions of finitely many convergent sequences of}\ X\}.$

Let $\mathcal{P}$ be a family of subsets of a space $X$. For any
$r\in\mathbb{N}$ and $P_1, ..., P_r\in \mathcal{P}$, denote the set $$\{S\in \mathcal{S}(X): S\subset
{\bigcup_{i=1}^r}P_i\ \mbox{and}\ S\cap P_j\neq \emptyset, 1\leq
j\leq r\}$$by $\langle
P_1, ..., P_r\rangle$. If $\mathcal{B}=\{P_1, ..., P_r\}$, then we denote $\langle
P_1, ..., P_r\rangle$ by $\langle\mathcal{B}\rangle$. In particular, if $\mathcal{P}$ be the family of all open subsets of a space $X$, then we just endow
$\mathcal{S}(X)$ with the Vietoris (that is, finite) topology, and a basic open set of $\mathcal{S}(X)$ is of the form
$\langle\mathcal{B}\rangle$ for any $\mathcal{B}=\{P_1, ..., P_r\}\subset\mathcal{P}$ and $r\in\mathbb{N}$.

The paper is organized as follows. In Section 2, we
introduce the necessary notations and terminology which are used for
the rest of the paper. In Section 3, we give a characterization of convergent sequence in $\mathcal{S}(X)$, which plays an important role in this paper.
In Section 4, we mainly discuss some cardinal invariants on $\mathcal{S}(X)$, such as, the character, the pseudocharacter, the $sn$-character, the $so$-character, the network weight and $cs$-network weight, etc. In Section 5, we study the rank $k$-diagonal of $\mathcal{S}(X)$, and gives an example to show that $\mathcal{S}(X)$ does not have any $G_{\delta}$-diagonal even if $X$ has a rank $2$-diagonal. In Section 6, we consider some generalized metric property  on $\mathcal{S}(X)$, such as, $\gamma$-spaces. In Section 7, we pose some questions on $\mathcal{S}(X)$.

\smallskip
\section{Preliminaries}
In this section, we introduce the necessary notations and terminology. Throughout this paper, all topological spaces are assumed to be Hausdorff, unless otherwise is explicitly stated. First of all, let $\mathbb{N}$, $\omega$, $\mathbb{Q}$, $\mathbb{P}$ and $\mathbb{R}$ denote the sets of all positive
  integers, non-negative integers, rational number, irrational number and real numbers, respectively. If $\mathcal{P}$ is a family of subsets of $X$, we denote $\mathcal{P}^{<\omega}$ by the set of all the finite subsets of $\mathcal{P}$. For undefined
  notations and terminology, the reader may refer to
  \cite{E1989}, \cite{G1984} and \cite{linbook}.

\begin{definition}
Let $X$ be a topological space, and let $A\subset X$.

\smallskip
$\bullet$  A subset $U$ of $X$ is called a {\it sequential neighborhood} of $A$ if $A\subset U$ and each sequence converging to some point $x\in A$ is eventually in $U$.

\smallskip
$\bullet$  A subset $P$ of $X$ is called a {\it sequential neighborhood} of $x \in X$, if each sequence converging to $x$ is eventually in $P$.

\smallskip
$\bullet$  A subset $U$ of $X$ is called {\it sequentially open} if $U$ is a sequential neighborhood of each of its points.
\end{definition}

Let $X$ be a space. We say that a sequence $\{A_{n}\}$ consisting of subsets of $X$ converges to a subset $A\subset X$ if for each open set $U$ in $X$ with $A\subset U$ there exists $N\in\mathbb{N}$ such that $A_{n}\subset U$ for any $n>N$.

\begin{definition}
Let $X$ be a space and let $\mathcal{P}$ be a cover of $X$. The family $\mathcal{P}$ is a {\it $CS$-network} of $X$ if, whenever a sequence $\{A_{n}\}_{n\in\mathbb{N}}$ consisting of subsets of $X$ converges to a subset $A\subset X$ and $U$ is an open neighborhood of $A$ in $X$, then there exist $m\in\mathbb{N}$ and $P\in \mathcal{P}$ such that $A\cup\bigcup\{A_{n}:n\geq m\}\subset P\subset U$.
\end{definition}

\begin{definition}
Recall that a subset $H$ of space X is a $G_{\delta}$-set if there is a sequence $\{U_{i}\}_{i\in \mathbb{N}}$ of open sets in $X$ such that $H=\bigcap_{i\in \mathbb{N}}U_{i}$ . A space $X$ is said to have a $G_{\delta}$-diagonal if the diagonal $\Delta_{X}=\{\langle x,x\rangle:x\in X\}$ is a $G_{\delta}$-set in $X^{2}$.
\end{definition}

\begin{definition}\cite{linbook}
A space $X$ is called an \emph{$S_{2}$}-{space} ({\it Arens' space})  if
$$X=\{\infty\}\cup \{x_{n}: n\in \mathbb{N}\}\cup\{x_{n, m}: m, n\in
\omega\}$$ and the topology is defined as follows:

\smallskip
(i) Each $x_{n, m}$ is isolated;

\smallskip
(ii) A basic neighborhood of $x_{n}$ is $\{x_{n}\}\cup\{x_{n, m}: m>k\}$, where $k\in\omega$;

\smallskip
(iii) A basic neighborhood of $\infty$ is $$\{\infty\}\cup (\bigcup\{V_{n}:n>k\})\ \mbox{for some}\ k\in \omega,$$ where $V_{n}$ is a neighborhood of $x_{n}$ for each $n\in\omega$.
\end{definition}

\begin{definition}\cite{linbook}
A space $X$ is called an \emph{ $S_\omega$}-{space} if
$$X=\{x\}\cup\{x_n(m): m\in \omega, n\in
\mathbb{N}\}$$ and the topology is defined as follows:

\smallskip
(a) Each $x_n(m)$ is an isolated point of $X$;

\smallskip
(b) The basic neighborhoods of $x$ is $$\{\{x\}\cup\{x_n(m): n\geq f(m)\}\}, \mbox{where}\ f\in\mathbb{N}^\omega.$$.
\end{definition}

\begin{definition}\cite{linbook}
Let $\mathscr P$ be a cover of a space $X$ such that

\smallskip
(1) $\mathscr P =\bigcup_{x\in X}\mathscr{P}_{x}$;

\smallskip
(2) For each point $x\in X$, if $U, V\in\mathscr{P}_{x}$, we have that $W\subset U\cap V$ for some $W\in \mathscr{P}_{x}$;

\smallskip
(3) For each point $x\in X$ and each open neighborhood $U$ of $x$, there is a $P\in\mathscr P_x$ such that $x\in P \subset U$.

\smallskip
$\bullet$ For any point $x\in X$, if each element of $\mathscr P_x$ is a sequential neighborhood of $x$ in $X$, then $\mathscr P_x$ is called an \emph{sn-network} at point $x$ in $X$. The family $\mathscr P$ is called an \emph{sn-network} for $X$ if each $\mathscr P_x$ is an sn-network at $x$ for each $x\in X$, and $X$ is called \emph{snf-countable} if $X$ has an $sn$-network $\mathscr P$ such that $\mathscr P_x$ is countable for all $x\in X$.

\smallskip
$\bullet$ For any point $x\in X$, if each element of $\mathscr P_x$ is a sequential open neighborhood of $x$ in $X$, then $\mathscr P_x$ is called an \emph{so-network} at point $x$ in $X$. The family $\mathscr P$ is called an \emph{so-network} for $X$ if each $\mathscr P_x$ is an so-network at $x$ for each $x\in X$, and $X$ is called \emph{sof-countable} if $X$ has an $so$-network $\mathscr P$ such that $\mathscr P_x$ is countable for all $x\in X$.
\end{definition}

\begin{definition}\cite{linbook}
Let $\mathscr P =\bigcup_{x\in X}\mathscr{P}_{x}$ be a cover of a space $X$, where each $x\in \cap\mathscr{P}_{x}$.

\smallskip
$\bullet$ For each $x\in X$, if for every sequence $\{x_{n}\}_{n\in\mathbb{N}}$ converging to $x\in U$ with $U$ open in $X$, there exists $P\in \mathscr{P}_{x}$ such that $\{x_{n}\}_{n\in\mathbb{N}}$ is eventually in $P$ and $P\subset U$, then $\mathscr{P}_{x}$ is called a \emph{cs-network} at $x$ in $X$. The family $\mathscr P$ is called
a \emph{cs-network} for $X$ if each $\mathscr{P}_{x}$ is a cs-network at $x$ for each $x\in X$. A space $X$ is called \emph{csf-countable} if $X$ has a
\emph{cs-network} $P$ such that each $\mathscr{P}_{x}$ is countable.

\smallskip
$\bullet$ For each $x\in X$, if for every sequence $\{x_{n}\}$ converges to $x\in U$ with $U$ open in $X$, there is a $P\in\mathcal{P}_{x}$ and a subsequence $\{x_{n_{i}}: i\in \mathbb{N}\}\subset\{x_{n}: n\in \mathbb{N}\}$ such that $\{x\}\cup\{x_{n_{i}}: i\in \mathbb{N}\}\subset P\subset U$, then $\mathscr{P}_{x}$ is called a \emph{cs$^{\ast}$-network} at $x$ in $X$. The family $\mathcal{P}$ is called a \emph{$cs^{\ast}$-network}
of $X$ if each each $\mathscr{P}_{x}$ is a cs$^{\ast}$-network at $x$ for each $x\in X$.
\end{definition}

\begin{definition}\cite{G1984}
Let $\mathscr P$ be a family of subsets of a space $X$.

\smallskip
$\bullet$ The family $\mathscr P$ is called a {\it network} if for each $x\in X$ and any open neighborhood $U$ of $x$  there exists $P\in\mathscr {P}$ such that $x\in P\subset U$.

\smallskip
$\bullet$ The family $\mathscr P$ is called a {\it $k$-network} if for every compact subset $K$ of $X$ and an arbitrary open set $U$ containing $K$ in $X$ there is a finite subfamily $\mathscr {P}^{\prime}\subset\mathscr {P}$ such that $K\subset \bigcup\mathscr {P}^{\prime}\subset U$.

\smallskip
$\bullet$ The space $X$ is an \emph{$\aleph_{0}$-space} if $X$ has a countable $k$-network.

\smallskip
$\bullet$ The space $X$ is {\it cosmic} if it is a regular space with a countable network.
\end{definition}

 \smallskip
\section{A characterization of convergent sequences of $\mathcal{S}(X)$}
In this section, we give a characterization of convergent sequences in $\mathcal{S}(X)$, which plays an important role in this paper. First of all, we need some lemmas.

Clearly, each element $A$ of $\mathcal{S}(X)$ can be represented as the following form:
\begin{enumerate}
\item $A=\bigcup _{n=1}^{k} S_{n}$, where $S_{n}=\{s_{nj}\}_{j\in \mathbb{N}}\cup\{s_{n}\}$ and $s_{nj}\rightarrow s_{n}~(j\rightarrow\infty)$ for each $n\in\{1, ..., k\}$ such that $S_{n}\cap S_{m}=\emptyset$ for distinct $n, m\in\{1,...,k\}$.

\smallskip
\item For each $n\in\{1, ..., k\}$, if $S_{n}$ is a trivial convergent sequence then $s_{nj}=s_{n}$ for each $j\in\mathbb{N}$.
\end{enumerate}

Throughout this paper, we always say that $A$ has this kind of form above for any $A\in\mathcal{S}(X)$.

\begin{lemma}\label{11}
For any $A=\bigcup _{n=1}^{k} S_{n}\in \mathcal{S}(X)$, if $\widehat{\mathcal{U}}$ is an open neighborhood of $A$ in $\mathcal{S}(X)$, then there exist a positive integer $N\geq k$ and disjoint open sets $V_{1}, ..., V_{N}$ in $X$ such that the following conditions hold:
\begin{enumerate}
\item $s_{i}\in V_{i}$ for each $i\leq k$;

\smallskip
\item $V_{i}\subset X\setminus\bigcup_{j=1}^{k}V_{j}$ and $|V_{i}\cap (A\setminus\bigcup_{j=1}^{k}V_{j})|=1$ for each $k<i\leq N$;

\smallskip
\item $A\in \langle V_{1},...,V_{N}\rangle \subset \widehat{U}$.
\end{enumerate}
\end{lemma}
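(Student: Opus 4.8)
The plan is to reduce first to a basic neighbourhood: pick open sets $W_{1},\ldots,W_{r}$ in $X$ with $A\in\langle W_{1},\ldots,W_{r}\rangle\subseteq\widehat{\mathcal{U}}$, so it suffices to produce pairwise disjoint open sets $V_{1},\ldots,V_{N}$ with $N\geq k$ obeying (1), (2) and $A\in\langle V_{1},\ldots,V_{N}\rangle\subseteq\langle W_{1},\ldots,W_{r}\rangle$. Throughout I would use three elementary observations: (a) each $S_{n}$ is compact (a convergent sequence with its limit), hence so is $A$, and both are closed; (b) every point of $A$ other than $s_{1},\ldots,s_{k}$ is isolated in $A$ --- if $x=s_{nj}$ and $x\neq s_{m}$ for all $m$, then by uniqueness of sequential limits in a Hausdorff space $x$ occurs only finitely often among the $s_{nj}$ and is not an accumulation point of $S_{n}$, while $x\notin S_{m}$ for $m\neq n$ and each $S_{m}$ is closed; (c) if $W_{j}$ contains no $s_{n}$, then $W_{j}\cap A$ is finite, since each $W_{j}\cap S_{n}$ is an open subset of the compact sequence $S_{n}$ omitting its limit.

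Next I would fix neighbourhoods of the limits. Let $U_{n}^{0}=\bigcap\{W_{i}:s_{n}\in W_{i}\}$; using that the pairwise disjoint compact sets $S_{1},\ldots,S_{k}$ can be enclosed in pairwise disjoint open sets of the Hausdorff space $X$, shrink $U_{n}^{0}$ to pairwise disjoint open $U_{1},\ldots,U_{k}$ with $s_{n}\in U_{n}\subseteq U_{n}^{0}$ and $U_{n}\cap A\subseteq S_{n}$ (intersect also with $X\setminus\bigcup_{m\neq n}S_{m}$). Since $s_{nj}\to s_{n}\in U_{n}$, all but finitely many terms of each sequence fall into the matching $U_{n}$, so $F:=A\setminus\bigcup_{n}U_{n}$ is finite; enlarge it to the still finite set $F^{*}:=F\cup\bigcup\{W_{j}\cap A: W_{j}\cap\{s_{1},\ldots,s_{k}\}=\emptyset\}$ (finite by (c)). By (b), $F^{*}=\{a_{1},\ldots,a_{p}\}$ consists of points isolated in $A$, so I can choose open $O_{l}\ni a_{l}$ with $O_{l}\cap A=\{a_{l}\}$; put $Q_{l}=\bigcap\{W_{i}:a_{l}\in W_{i}\}$.

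The decisive step is the disjointification. Set $T_{n}:=(S_{n}\cap U_{n})\setminus F^{*}$; since $S_{n}\cap U_{n}$ is closed (its only possible accumulation point, $s_{n}$, already lies in it) and we remove finitely many points isolated in $A$, each $T_{n}$ is compact and contains $s_{n}$, and $T_{1},\ldots,T_{k},\{a_{1}\},\ldots,\{a_{p}\}$ are pairwise disjoint compact subsets of $X$; hence there are pairwise disjoint open $G_{1},\ldots,G_{k},H_{1},\ldots,H_{p}$ with $T_{n}\subseteq G_{n}$, $a_{l}\in H_{l}$. Define $V_{n}:=U_{n}\cap G_{n}$ $(n\leq k)$, $V_{k+l}:=O_{l}\cap Q_{l}\cap H_{l}$ $(l\leq p)$, and $N:=k+p$. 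What remains is routine verification: the $V_{i}$ are pairwise disjoint (they sit inside the disjoint $G_{n},H_{l}$); $V_{n}\cap A=T_{n}$ and $V_{k+l}\cap A=\{a_{l}\}$, whence $A=\bigcup_{n}T_{n}\cup F^{*}\subseteq\bigcup_{i}V_{i}$ and $A\setminus\bigcup_{j\leq k}V_{j}=F^{*}$, giving (1) and (2); and each $V_{i}$ lies in some $W_{j}$ while each $W_{j}$ contains some $V_{i}$ --- a $V_{n}$ with $s_{n}\in W_{j}$ if $W_{j}$ meets $\{s_{1},\ldots,s_{k}\}$, otherwise $V_{k+l}$ for a point $a_{l}\in W_{j}\cap A\subseteq F^{*}$ --- so $\langle V_{1},\ldots,V_{N}\rangle\subseteq\langle W_{1},\ldots,W_{r}\rangle\subseteq\widehat{\mathcal{U}}$, i.e. (3).

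The delicate point is the tension between shrinking the $U_{n}$ to achieve disjointness and keeping $A$ covered while leaving every $W_{j}$ inhabited by some $V_{i}$: a naive point-by-point Hausdorff separation of the finitely many relevant points re-introduces uncovered terms of the sequences, and a $W_{j}$ meeting $A$ only in non-limit points may contain no $V_{i}$. Both are handled by (i) folding all of $W_{j}\cap A$ for such $W_{j}$ into the finite set $F^{*}$, using compactness of the $S_{n}$, and (ii) carrying out the separation in one stroke at the level of the compact sets $T_{n}$ rather than point by point, so that nothing outside $F^{*}$ is lost.
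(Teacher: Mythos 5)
Your overall architecture (shrink disjoint neighbourhoods of the limits $s_{1},\dots,s_{k}$, collect a finite exceptional set, isolate its points, and disjointify compact pieces) is the same as the paper's, but there is a genuine gap at the one place where you deviate: your observation (c) is false. An open subset of a convergent sequence that omits the limit need \emph{not} be finite --- only closed (compact) subsets omitting the limit are forced to be finite, since every subset of $\{s_{nj}:j\in\mathbb{N}\}$ is open in $S_{n}$. Concretely, take $X=\mathbb{R}$, $A=S_{1}=\{0\}\cup\{1/m:m\in\mathbb{N}\}$, and the basic neighbourhood $\langle W_{1},W_{2}\rangle$ with $W_{1}=(-1,2)$ and $W_{2}=\bigcup_{m}(1/2m-\varepsilon_{m},\,1/2m+\varepsilon_{m})$ for small $\varepsilon_{m}$: then $W_{2}$ contains no limit point of $A$, yet $W_{2}\cap A=\{1/2m:m\in\mathbb{N}\}$ is infinite. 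In this situation your set $F^{*}=F\cup\bigcup\{W_{j}\cap A:W_{j}\cap\{s_{1},\dots,s_{k}\}=\emptyset\}$ is infinite, and the rest of the construction collapses: you cannot enumerate $F^{*}$ as $a_{1},\dots,a_{p}$, cannot run the finite disjointification producing $H_{1},\dots,H_{p}$, and indeed no choice of finitely many $V_{k+1},\dots,V_{N}$ each meeting $A\setminus\bigcup_{j\leq k}V_{j}$ in exactly one point can absorb an infinite set excluded from the $V_{j}$'s with $j\leq k$ (condition (2) forces $A\setminus\bigcup_{j\leq k}V_{j}$ to be finite).

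The repair is exactly the move the paper makes: for each $W_{j}$ (there, $U_{i}$ with $i\in\Lambda$) containing none of the limits, do not fold all of $W_{j}\cap A$ into the exceptional set, but choose a \emph{single} witness $b_{j}\in W_{j}\cap A$ (nonempty because $A\in\langle W_{1},\dots,W_{r}\rangle$). Then require the neighbourhoods $U_{n}$ of the limits to avoid only these finitely many witnesses, so the exceptional set is $F\cup\{b_{j}\}$, which is finite; the remaining points of $W_{j}\cap A$ are simply swallowed by the appropriate $V_{n}$, $n\leq k$, which is harmless, while the witness $b_{j}$ supplies the $V_{i}\subset W_{j}$ needed to conclude $\langle V_{1},\dots,V_{N}\rangle\subset\langle W_{1},\dots,W_{r}\rangle$. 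With that single change (and deleting claim (c)), your verification of (1)--(3) goes through and coincides in substance with the paper's proof.
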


\begin{proof}
Since $\widehat{U}$ is an open neighborhood of $A$ in $\mathcal{S}(X)$, there exist open sets $U_{1},...,U_{t}$ in $X$ such that $A\in\langle U_{1}, ..., U_{t}\rangle \subset \widehat{U} $. Let $\Lambda_{n}=\{j: s_{n}\in U_{j}, j\leq t\}$ for any $n\in\{1,...,k\}$. Put $\Lambda=\{1, \cdots, t\}\setminus\bigcup_{n=1}^{k}\Lambda_{n}$. For each $i\in\Lambda$, take an arbitrary $b_{i}\in A\cap U_{i}$. Let $A_{t}=\{b_{i}: i\in\Lambda\}$. Moreover, it is easy to see that each $\Lambda_{n} \neq\emptyset.$ Since $X$ is Hausdorff and $s_{n}\in \bigcap_{j\in \Lambda_{n}}U_{j}$ for any $n\in\{1,...,k\}$, it follows that we can find disjoint open sets $V_{1},...,V_{k}$ in $X$ such that $s_{n}\in V_{n}\subset \bigcap _{j\in \Lambda_{n}}U_{j}$ and $V_{n}\cap A_{t}=\emptyset$. Set $K_{n}=(\{s_{nj}: j\in\mathbb{N}\}\cup\{s_{n}\})\cap V_{n}$ for any $n\in\{1,...,k\}$. Then the family $\{K_{n}: n\in\{1,...,k\}\}$ consists of disjoint compact and closed subsets of $X$ and $K_{n}\subset V_{n}$  for any $n\in\{1,...,k\}$.
Evidently, the set $X'=A\cap (X\setminus \bigcup _{n=1}^{k} V_{n})$ is finite. Therefore, we can set $X'\cup A_{t}=\{a_{k+1}, ..., a_{N}\}$. Then it follows that there exist mutually disjoint open sets $V_{k+1}, ..., V_{N}$ in $X$ satisfy the following conditions:

\smallskip
(a) $V_{n}\cap A=\{a_{n}\}$ for each $n\in \{k+1, \cdots, N\}$;

\smallskip
(b) $\bigcup_{n=k+1}^{N} V_{n}\subset \bigcup_{j=1}^{t}U_{j}$;

\smallskip
(c) for each $i\in\Lambda$, there exists $n\in\{k+1, \cdots, N\}$ such that $b_{i}\in V_{n}$ and $V_{n}\subset U_{i}$.

From \cite[Lemma 2.3.1]{M}, we can easily verify that $$A\in \langle V_{1}, ..., V_{N}\rangle \subset\langle U_{1}, ..., U_{t}\rangle  \subset\widehat{U}$$ and $V_{1}, ..., V_{N}$ satisfy the conditions (1)-(3) above.
\end{proof}

\begin{lemma}\label{16}
Let $X$ be a Hausdorff space and the sequence $\{A_{n}\}$ of $\mathcal{S}(X)$ converge to a point $A$ in $\mathcal{S}(X)$. Then $A\cup\bigcup_{n\in\mathbb{N}}A_{n}$ is a countable compact metrizable subspace of $X$.
\end{lemma}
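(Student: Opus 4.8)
The plan is to verify the three asserted properties of $Y:=A\cup\bigcup_{n\in\mathbb N}A_n$ in turn, the only one genuinely using the convergence hypothesis being compactness. Countability is immediate: by the definition of $\mathcal S(X)$ each $A_n$ and $A$ is a union of finitely many convergent sequences, hence countable, so $Y$ is a countable union of countable sets; and $Y$ is Hausdorff as a subspace of $X$.

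For compactness the key point I would isolate is that for every open $W\subseteq X$ with $A\subseteq W$, the set $\langle W\rangle=\{S\in\mathcal S(X):S\subseteq W\}$ is a basic Vietoris-open neighbourhood of $A$ in $\mathcal S(X)$ — here the clause ``$S\cap W\neq\emptyset$'' is automatic, since every element of $\mathcal S(X)$ lies in $2^X$ and is therefore nonempty. Consequently, since $A_n\to A$, there is $M\in\mathbb N$ with $A_n\subseteq W$ for all $n>M$. Now given a cover $\mathcal W$ of $Y$ by sets open in $X$, I would first use that $A$, being a finite union of convergent sequences, is compact to extract $O_1,\dots,O_r\in\mathcal W$ covering $A$; put $W=O_1\cup\cdots\cup O_r$ and take the corresponding $M$. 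Then $Y\setminus W\subseteq\bigcup_{n=1}^{M}A_n$, which is a finite union of compact sets, hence compact, so it is covered by finitely many further members $O_{r+1},\dots,O_s\in\mathcal W$; thus $\{O_1,\dots,O_s\}$ is a finite subcover of $Y$. (Lemma \ref{11} could be used to describe neighbourhoods of $A$ more explicitly, but it is not needed here.)

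Finally, at this stage $Y$ is a countable compact Hausdorff space, and I would close with the classical fact that every such space is metrizable: $Y$ is regular (compact Hausdorff), and being countable it has a countable network, whence a compact Hausdorff space with a countable network is second countable and Urysohn's metrization theorem applies (equivalently, one may quote that a countable compact Hausdorff space is homeomorphic to a countable successor ordinal with its order topology). I do not expect a real obstacle in this lemma; the step that deserves a little care is the compactness argument, specifically the observation that convergence in the Vietoris topology forces $\{A_n\}$ to lie eventually inside every open set containing $A$, which is exactly what reduces the statement to the already-known compactness of finite unions of convergent sequences.
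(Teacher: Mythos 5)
Your proposal is correct, and it reaches the conclusion by the same overall decomposition as the paper (countability is trivial, compactness is the substantive point, and metrizability then follows because a compact Hausdorff space with a countable network is metrizable — your closing step is literally the paper's). The one genuine difference is the compactness step: the paper simply cites \cite[Theorem 0.2]{IN}, a general fact about Vietoris limits in the hyperspace of compact sets, whereas you prove compactness of $Y=A\cup\bigcup_{n\in\mathbb N}A_n$ directly. Your argument is sound: $\langle W\rangle$ is indeed a basic Vietoris neighbourhood of $A$ for every open $W\supseteq A$ (the clause $S\cap W\neq\emptyset$ being automatic for nonempty $S$), so $A_n\subseteq W$ eventually; covering the compact set $A$ by finitely many members of a given open cover, taking $W$ to be their union, and then covering the compact set $\bigcup_{n\le M}A_n\supseteq Y\setminus W$ by finitely many further members yields a finite subcover. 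This buys a self-contained, elementary proof (essentially the standard proof of the cited theorem specialized to sequences of elements of $\mathcal S(X)$), at the cost of a few extra lines; the paper's version is shorter but depends on the external reference. Either route for metrizability (countable network in a compact Hausdorff space, or the Mazurkiewicz--Sierpi\'nski description of countable compact Hausdorff spaces as countable successor ordinals) is acceptable, and the first is exactly what the paper uses.
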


\begin{proof}
Since $\{A_{n}\}$ converges to $A$, the set $B=A\cup\bigcup_{n\in\mathbb{N}}A_{n}$ is a compact subset in $X$ by \cite[Theorem 0.2]{IN}. Then $B$ is metrizable since a Hausdorff compact space with a countable network is metrizable \cite{G1984}, hence $B$ is a countable compact metrizable subspace of $X$.
\end{proof}

Now, we can prove the main result in this section.

\begin{theorem}\label{18}
Let $X$ be a space and $A\in \mathcal{S}(X)$. For any sequence $\{A_{n}\}$ in $\mathcal{S}(X)$, the following conditions (1)-(3) are equivalent:

\smallskip
(1) The sequence $\{A_{n}\}$ converges to $A$ in $\mathcal{S}(X)$.

\smallskip
(2) The sequence $\{A_{n}\}$ satisfies the following conditions:

\smallskip
(i) For any strictly increasing subsequence $\{n_{k}\}\subset\mathbb{N}$, we have $A=\bigcap_{n=1}^{\infty}\overline{\bigcup_{k\geq n}A_{n_{k}}}$;

\smallskip
(ii)For any neighborhood $U$ of $A$ in $X$, there is $N\in\mathbb{N}$ such that $A_{n}\subset U$ for any $n>N$.

\smallskip
(3) The sequence $\{A_{n}\}$ satisfies the following conditions:

\smallskip
(i$^{\prime}$) For any strictly increasing subsequence $\{n_{k}\}\subset\mathbb{N}$, we have $A=\bigcap_{n=1}^{\infty}\overline{\bigcup_{k\geq n}A_{n_{k}}}$;

\smallskip
(ii$^{\prime}$) For any sequential neighborhood $U$ of $A$ in $X$, there is $N\in\mathbb{N}$ such that $A_{n}\subset U$ for any $n>N$.
\end{theorem}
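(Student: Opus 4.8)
The plan is to prove the cycle $(3)\Rightarrow(2)\Rightarrow(1)\Rightarrow(3)$, which establishes all the equivalences. Two preliminary remarks organize everything. First, every open neighbourhood $U$ of $A$ in $X$ is in particular a sequential neighbourhood of $A$: if $x_n\to x\in A\subseteq U$ with $U$ open, then $\{x_n\}$ is eventually in $U$. Since (i) and (i$'$) are the same condition and (ii$'$) is formally stronger than (ii), this already gives $(3)\Rightarrow(2)$. Second, since $\langle U\rangle$ and $\langle X,O\rangle$ are basic open sets of the Vietoris topology on $\mathcal{S}(X)$ and the sets $\langle U_1,\dots,U_t\rangle$ form a base, convergence $\{A_n\}\to A$ in $\mathcal{S}(X)$ yields both standard ``halves'' of Vietoris convergence, which I will use freely: for every open $U\supseteq A$ one has $A_n\subseteq U$ for all large $n$; and for every open $O$ with $O\cap A\neq\emptyset$ one has $A_n\cap O\neq\emptyset$ for all large $n$. (Alternatively, Lemma~\ref{11} can be invoked to replace a basic neighbourhood of $A$ by one in the normalized form it produces.)

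For $(2)\Rightarrow(1)$ I would fix a basic neighbourhood $\langle U_1,\dots,U_t\rangle$ of $A$ and show $A_n\in\langle U_1,\dots,U_t\rangle$ for all large $n$. That $A_n\subseteq\bigcup_{i\le t}U_i$ eventually is precisely (ii) applied to the open neighbourhood $\bigcup_{i\le t}U_i$ of $A$. Fix $j\le t$; since $A\in\langle U_1,\dots,U_t\rangle$ we have $A\cap U_j\neq\emptyset$. If $A_n\cap U_j=\emptyset$ for infinitely many $n$, then choosing a strictly increasing $\{n_k\}$ with $A_{n_k}\cap U_j=\emptyset$ for all $k$ gives $\bigcup_kA_{n_k}\subseteq X\setminus U_j$, so (i) forces $A=\bigcap_m\overline{\bigcup_{k\ge m}A_{n_k}}\subseteq\overline{X\setminus U_j}=X\setminus U_j$, contradicting $A\cap U_j\neq\emptyset$. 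Hence $A_n\cap U_j\neq\emptyset$ for all large $n$; combining the finitely many thresholds (over $j\le t$, plus the one for $\bigcup_{i\le t}U_i$) gives $A_n\in\langle U_1,\dots,U_t\rangle$ eventually, i.e.\ $\{A_n\}\to A$.

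The substantial implication is $(1)\Rightarrow(3)$. Assuming $\{A_n\}\to A$ in $\mathcal{S}(X)$, Lemma~\ref{16} gives that $B:=A\cup\bigcup_nA_n$ is a countable compact metrizable subspace of $X$ containing $A$ and every $A_n$; fix a compatible metric $d$ on $B$. For (i$'$): a subsequence of a convergent sequence converges to the same limit, so it suffices to prove $A=\bigcap_m\overline{\bigcup_{k\ge m}A_k}=:L$. The inclusion $A\subseteq L$ is immediate from the lower half of Vietoris convergence. For $L\subseteq A$, note that $L\subseteq\overline{\bigcup_kA_k}\subseteq B$ because $B$ is closed in $X$; if some $x\in L\setminus A$ existed, then, $A$ being closed in $B$, I would pick $\varepsilon>0$ with the closed ball $\{y\in B:d(x,y)\le\varepsilon\}$ disjoint from $A$, observe that this ball is closed in $X$ so its $X$-complement is an open neighbourhood of $A$, and use the upper half of Vietoris convergence to get $N$ with $\bigcup_{k\ge N}A_k$ disjoint from the ball; since all $A_k\subseteq B$, a neighbourhood of $x$ in $B$ — hence a neighbourhood of $x$ in $X$ — would then miss $\bigcup_{k\ge N}A_k$, so $x\notin\overline{\bigcup_{k\ge N}A_k}\supseteq L$, a contradiction. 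Thus $L=A$.

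Finally, for (ii$'$): let $U$ be a sequential neighbourhood of $A$ in $X$. I would first show $U\cap B$ is a genuine neighbourhood of $A$ in $B$: if not, some $a\in A$ fails to be interior to $U\cap B$ in $B$, and then, using that $B$ is first countable, a sequence taken from $B\setminus U$ along a countable neighbourhood base of $a$ in $B$ converges to $a\in A$ while avoiding $U$, contradicting the defining property of $U$. Then I would choose an open $O$ in $X$ with $O\cap B=\mathrm{int}_B(U\cap B)$, so that $A\subseteq O$ and $O\cap B\subseteq U$, and conclude from the upper half of Vietoris convergence that $A_n\subseteq O$, hence $A_n\subseteq O\cap B\subseteq U$, for all large $n$. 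The step I expect to be the main obstacle is exactly this passage within $(1)\Rightarrow(3)$: converting the purely sequential hypotheses (about $U$ in (ii$'$), and about separating $x$ from $A$ in the proof of $L\subseteq A$) into honest open subsets of $X$ around $A$. This is where the metrizability of $B$ furnished by Lemma~\ref{16} is indispensable — in a general Hausdorff space a sequential neighbourhood of $A$ may contain no open neighbourhood of $A$, and without this input the characterization would break down.
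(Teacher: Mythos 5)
Your proof is correct, and its skeleton (the cycle $(3)\Rightarrow(2)\Rightarrow(1)\Rightarrow(3)$, the two halves of Vietoris convergence, and Lemma~\ref{16}) matches the paper's; the genuine divergence is inside $(1)\Rightarrow(3)$. Your $(2)\Rightarrow(1)$ is essentially the paper's argument, except that you work with an arbitrary basic neighbourhood $\langle U_1,\dots,U_t\rangle$ rather than first normalizing via Lemma~\ref{11}, and you spell out the subsequence contradiction for the ``lower'' part that the paper leaves implicit. For (i$^{\prime}$), the paper argues directly in $X$: a point of $A\setminus B$ is handled with $\langle X,V\rangle$, and a point $b\in B\setminus A$ is separated from the compact set $A$ by an open $V$ with $\overline{V}\cap A=\emptyset$, using $\langle X\setminus\overline{V}\rangle$ --- no metric needed --- whereas you route the inclusion $L\subseteq A$ through the metric on $B=A\cup\bigcup_n A_n$; both work, but the paper's version shows metrizability is not needed there. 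For (ii$^{\prime}$), the paper picks witnesses $a_k\in A_{n_k}\setminus U$, argues via (i$^{\prime}$) that they form an infinite set, extracts a convergent subsequence inside the compact metrizable $B$ (Lemma~\ref{16}), shows its limit lies in $A$ by (i$^{\prime}$), and contradicts the sequential-neighbourhood property of $U$; you instead upgrade $U$ to an honest neighbourhood: $U\cap B$ is a neighbourhood of $A$ in $B$ by first countability of $B$, you extend its $B$-interior to an $X$-open $O$ with $A\subseteq O$ and $O\cap B\subseteq U$, and finish with the upper half of convergence. Your version is arguably cleaner (it bypasses the paper's briskly justified ``$\{a_k\}$ is infinite'' step) at the price of leaning on Lemma~\ref{16} more heavily. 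One phrasing to tighten: a neighbourhood of $x$ in $B$ is not literally a neighbourhood of $x$ in $X$; your argument is saved because the sets to be avoided ($\bigcup_{k\geq N}A_k$, and later the $A_n$) are contained in $B$, so the $X$-open set witnessing relative openness already misses them --- say this explicitly.
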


\begin{proof}
Obviously, (3) $\Rightarrow$ (2). It suffices to prove ~(2) $\Rightarrow$ (1) and ~(1) $\Rightarrow$ (3).

\smallskip
(2) $\Rightarrow$ (1). Assuming that the sequence $\{A_{n}\}$ satisfies the conditions (i) and (ii). Take an arbitrary open neighborhood $\hat{\mathcal{U}}$ of $A\in \mathcal{S}(X)$. Now we will prove that there exists $N\in\mathbb{N}$ such that $A_{n}\in \hat{\mathcal{U}}$ for any $n>N$. From Lemma~\ref{11}, we can find disjoint open sets $V_{1}, V_{2}, ~\cdots, V_{m}$ in $X$ such that $A\in \langle V_{1}, V_{2},\cdots, V_{m}\rangle\subset\hat{\mathcal{U}}$. For each $i\in\{1, \cdots, m\}$, it follows from (i) that there exists $N(i)\in\mathbb{N}$ such that $V_{i}\cap A_{n}\neq\emptyset$ for each $n>N(i)$. Put $N_{1}=\max\{N(i): i=1, 2, \cdots, m\}$. Then $A_{n}\cap V_{i}\neq\emptyset$ for any $i\in\{1, \cdots, m\}$ and $n>N_{1}$. Since $A\in \langle V_{1}, V_{2}, \cdots, V_{m}\rangle$, we can see that $A\subset\bigcup_{i=1}^{m}V_{i}$, then there exists $N_{2}\in \mathbb{N}$ by (ii) such that $A_{n}\subset\bigcup_{i=1}^{m}V_{i}$ for any $n>N_{2}$. Put $N=\max\{N_{1}, N_{2}\}$, then $A_{n}\in\langle V_{1}, V_{2}, \cdots, V_{m}\rangle\subset\hat{\mathcal{U}}$ for each $n>N$. Therefore,  the sequence $\{A_{n}\}$ converges to $A$ in $\mathcal{S}(X)$.

\smallskip
(1) $\Rightarrow$ (3). Assume that the sequence ~$\{A_{n}\}$ converges to ~$A$ in ~$\mathcal{S}(X)$. We will prove that sequence $\{A_{n}\}$ satisfies the conditions (i$^{\prime}$) and (ii$^{\prime}$).

\smallskip
(i$^{\prime}$) Suppose that there exists strictly increasing subsequence $\{n_{k}\}\subset\mathbb{N}$ such that $A\neq\bigcap_{n=1}^{\infty}\overline{\bigcup_{k\geq n}A_{n_{k}}}=B$. Obviously, the subsequence $\{A_{n_{k}}\}$ also converges to $A$. In order to obtain a contradiction, we divide the proof into the following two cases:

\smallskip
{\bf case 1:} $A\setminus B\neq\emptyset$.

\smallskip
Take any $a\in A\setminus B$. Then $a\not\in B$, thus there exists $N\in\mathbb{N}$ such that $a\not\in \overline{\bigcup_{k>N}A_{n_{k}}}$. Then there is an open neighborhood of $a$ in $X$ such that $V\cap \overline{\bigcup_{k>N}A_{n_{k}}}=\emptyset$. Let $\widehat{U}=\langle X, V\rangle$, hence $\widehat{U}$ is an open neighborhood of $A$ in $\mathcal{S}(X)$, but $A_{n_{k}}\not\in\widehat{U}$ for any $k>N$, which is a contradiction with $\{A_{n_{k}}\}$ converging to $A$ in $\mathcal{S}(X)$.

\smallskip
{\bf case 2:} $B\setminus A\neq\emptyset$.

\smallskip
Take any $b\in B\setminus A$. Then $b\not\in A$, hence from the compactness of $A$ in $X$ it follows that there exists an open set $V$ of $b$ in $X$ such that $\overline{V}\cap A=\emptyset$. Put $\widehat{U}=\langle X\setminus \overline{V}\rangle$. Then $\widehat{U}$ is an open neighborhood of $A$ in $\mathcal{S}(X)$. However,  since $b\in B\setminus A$, there exists a subsequence $\{A_{n_{k_{l}}}\}$ of $\{A_{n_{k}}\}$ such that $V\cap A_{n_{k_{l}}}\neq\emptyset$ for each $l\in\mathbb{N}$, hence $A_{n_{k_{l}}}\not\subset X\setminus \overline{V}$, that is, $A_{n_{k_{l}}}\not\in \widehat{U}$, which is a contradiction.

\smallskip
(ii$^{\prime}$) Suppose not, then we can find a sequential neighborhood $U$ of $A$ in $X$ such that for any $k\in\mathbb{N}$, there is $n_{k}>k$ satisfying $A_{n_{k}}\setminus U\neq\emptyset$. Without loss of generality, we may assumed that $\{n_{k}\}_{k\in \mathbb{N}}$ is strictly increasing. For any $k\in\mathbb{N}$, take $a_{k}\in A_{n_{k}}\setminus U$. Since ~$A\subset U$, we see that $a_{k}\not\in A$. From ~(i$^{\prime}$), it easily see that $\{a_{k}: k\in\mathbb{N}\}$ is an infinite set. By Lemma~\ref{16}, $\{a_{k}: k\in\mathbb{N}\}$ must have a convergent subsequence and let $\{a_{k_{l}}: l\in\mathbb{N}\}$ converge to $a$. From (i$^{\prime}$), we know that $a\in A$, then there is $N\in\mathbb{N}$ such that $a_{k_{l}}\in U$ for any $l>N$, which is a contradiction. Consequently, there exists $N\in \mathbb{N}$ such that $A_{n}\subset U$ for any $n>N$.
\end{proof}

Finally, we study the hyperspace $\mathcal{S}(X)$ with a $cs$-network.

\begin{lemma}\label{12}
Let the sequence $\{A_{n}\}_{n\in N}$ in $\mathcal{S}(X)$ converge to a point $A=\bigcup _{n=1}^{k} S_{n}$. If $V_{1}, ..., V_{N}$ are mutually disjoint open sets in $X$ which satisfy the conditions in Lemma~\ref{11}, then sequence $\{A_{n}\cap V_{m}\}_{n\in \mathbb{N}}$ converges to the set $A\cap V_{m}$ in $X$ for each $m\in \{1, ..., N\}$.
\end{lemma}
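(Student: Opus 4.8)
The plan is to unwind the definition of convergence of a sequence of subsets and, for each fixed index $m$, to reduce it to the Vietoris convergence $A_{n}\to A$ in $\mathcal{S}(X)$ applied to one suitably chosen basic open neighbourhood of $A$.

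Fix $m\in\{1,\dots,N\}$ and let $W$ be an arbitrary open subset of $X$ with $A\cap V_{m}\subset W$; I must find $N_{0}\in\mathbb{N}$ such that $A_{n}\cap V_{m}\subset W$ for all $n>N_{0}$. Consider the finite family of open sets $\mathcal{B}'=\{V_{1},\dots,V_{m-1},\,W\cap V_{m},\,V_{m+1},\dots,V_{N}\}$. The first step is to verify that $\langle\mathcal{B}'\rangle$ is an open neighbourhood of $A$ in $\mathcal{S}(X)$, i.e. that $A\in\langle\mathcal{B}'\rangle$. Since $V_{1},\dots,V_{N}$ satisfy the conditions of Lemma~\ref{11}, we have $A\subset\bigcup_{i=1}^{N}V_{i}$ and $A\cap V_{i}\neq\emptyset$ for every $i$. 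Hence $A\cap(W\cap V_{m})=A\cap V_{m}\neq\emptyset$ (using $A\cap V_{m}\subset W$), while $A$ still meets each $V_{i}$ with $i\neq m$; and if $a\in A$ then $a$ lies in exactly one $V_{i}$, with $a\in A\cap V_{m}\subset W\cap V_{m}$ in the case $i=m$, so $A\subset\big(\bigcup_{i\neq m}V_{i}\big)\cup(W\cap V_{m})$. This gives $A\in\langle\mathcal{B}'\rangle$.

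Now apply the hypothesis that $\{A_{n}\}$ converges to $A$ in $\mathcal{S}(X)$: there is $N_{0}\in\mathbb{N}$ with $A_{n}\in\langle\mathcal{B}'\rangle$ for every $n>N_{0}$, so in particular $A_{n}\subset\big(\bigcup_{i\neq m}V_{i}\big)\cup(W\cap V_{m})$. Because $V_{1},\dots,V_{N}$ are mutually disjoint, intersecting this inclusion with $V_{m}$ eliminates every term $V_{i}$ with $i\neq m$ and yields $A_{n}\cap V_{m}\subset W\cap V_{m}\subset W$ for all $n>N_{0}$. Since $W$ was an arbitrary open set containing $A\cap V_{m}$, this says exactly that $\{A_{n}\cap V_{m}\}_{n\in\mathbb{N}}$ converges to $A\cap V_{m}$ in $X$, and $m\in\{1,\dots,N\}$ was arbitrary. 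There is no serious obstacle here: the argument is a localization using disjointness of the $V_{i}$, and the only step demanding a little care is checking that replacing $V_{m}$ by $W\cap V_{m}$ preserves all three defining requirements of the symbol $\langle\cdot\rangle$ — which is exactly where the conditions $A\cap V_{i}\neq\emptyset$ and $A\cap V_{m}\subset W$ enter.
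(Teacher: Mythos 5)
Your proof is correct and follows essentially the same route as the paper's: the paper also fixes a neighborhood $U_{m}$ of $A\cap V_{m}$, replaces $V_{m}$ by $O_{m}=V_{m}\cap U_{m}$ in the basic open set $\langle V_{1},\dots,V_{N}\rangle$, applies convergence of $\{A_{n}\}$ to this neighborhood of $A$, and uses disjointness of the $V_{i}$ to conclude $A_{n}\cap V_{m}\subset O_{m}\subset U_{m}$ eventually. No gaps; your verification that $A$ lies in the modified basic open set is exactly the point the paper leaves implicit.
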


\begin{proof}
Fix $m\in \{1,...,N\}$, let $U_{m}$ be a neighborhood of $A\cap V_{m}$ in $X$, and let $O_{m}=V_{m}\cap U_{m}$. Then $O_{m}$ is a neighborhood of $A\cap V_{m}$ in $X$, hence it follows that $$A\in \langle V_{1},..., V_{m-1}, O_{m}, V_{m+1},..., V_{N}\rangle.$$ Since $\{A_{n}\}_{n\in \mathbb{N}}$ converges to the point $A$, there exists $M\in \mathbb{N}$ such that $$\{A_{n}: n>M\}\subset \langle V_{1}, ..., V_{m-1}, O_{m}, V_{m+1},..., V_{N}\rangle\cap \langle V_{1},..., V_{N}\rangle.$$ Because $V_{1},...,V_{N}$ are mutually disjoint open sets and $O_{m}\subset V_{m}$, then $A_{n}\cap V_{m} \subset O_{m}\subset U_{m}$ for any $n>M$. Therefore, $\{A_{n}\cap V_{m}\}_{n\in \mathbb{N}}$ converges to the set $A\cap V_{m}$ in $X$.
\end{proof}

\smallskip
\begin{theorem}
If $\mathcal{N}$ is a $CS$-network of a space $X$, then $$\mathcal{B}=\{\langle \mathcal{P}\rangle: \mathcal{P}\ \mbox{is a finite subset of}\ \mathcal{N}\}$$ is a $cs$-network of $\mathcal{S}(X)$.
\end{theorem}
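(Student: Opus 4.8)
The plan is to show directly that $\mathcal{B}$ is a cs-network at each point $A\in\mathcal{S}(X)$. Fix $A=\bigcup_{n=1}^k S_n\in\mathcal{S}(X)$, a sequence $\{A_n\}$ in $\mathcal{S}(X)$ converging to $A$, and an open neighborhood $\widehat{U}$ of $A$ in $\mathcal{S}(X)$. By Lemma~\ref{11} I would first replace $\widehat{U}$ by a basic neighborhood: there exist mutually disjoint open sets $V_1,\dots,V_N$ in $X$ with $A\in\langle V_1,\dots,V_N\rangle\subset\widehat{U}$ satisfying conditions (1)--(3) of that lemma. Since $\{A_n\}$ converges to $A$, all but finitely many $A_n$ lie in $\langle V_1,\dots,V_N\rangle$; discard the finitely many that do not, so without loss of generality $A_n\in\langle V_1,\dots,V_N\rangle$ for every $n$. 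Then by Lemma~\ref{12}, for each fixed $m\in\{1,\dots,N\}$ the sequence of subsets $\{A_n\cap V_m\}_{n\in\mathbb{N}}$ of $X$ converges to $A\cap V_m$.

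Now I would invoke the $CS$-network hypothesis on each coordinate. For each $m$, the set $A\cap V_m$ is nonempty (it contains $s_m$ if $m\le k$, or the single point $a_m$ if $m>k$) and $V_m$ is an open neighborhood of it; since $\{A_n\cap V_m\}$ converges to $A\cap V_m$, there are $j_m\in\mathbb{N}$ and $P_m\in\mathcal{N}$ with
$$(A\cap V_m)\cup\bigcup_{n\ge j_m}(A_n\cap V_m)\subset P_m\subset V_m.$$
Put $\mathcal{P}=\{P_1,\dots,P_N\}$, a finite subset of $\mathcal{N}$, so $\langle\mathcal{P}\rangle\in\mathcal{B}$. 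Since each $P_m\subset V_m$, we get $\langle\mathcal{P}\rangle\subset\langle V_1,\dots,V_N\rangle\subset\widehat{U}$. It remains to check that $A$ together with a tail of $\{A_n\}$ lies in $\langle\mathcal{P}\rangle$. Let $j=\max\{j_1,\dots,j_N\}$. For $n\ge j$: each $A_n\subset\bigcup_m V_m$, so $A_n=\bigcup_m (A_n\cap V_m)\subset\bigcup_m P_m$; and for each $m$, $A_n\cap P_m\supset A_n\cap(A_n\cap V_m)=A_n\cap V_m\ne\emptyset$ because $A_n\in\langle V_1,\dots,V_N\rangle$. Hence $A_n\in\langle\mathcal{P}\rangle$ for all $n\ge j$. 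The identical computation with $A$ in place of $A_n$ (using $A\in\langle V_1,\dots,V_N\rangle$ and $A\cap V_m\subset P_m$) gives $A\in\langle\mathcal{P}\rangle$.

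Finally I would assemble these into the definition of a cs-network: given the convergent sequence $\{A_n\}\to A$ and the neighborhood $\widehat{U}$, the element $\langle\mathcal{P}\rangle\in\mathcal{B}$ satisfies $\{A\}\cup\{A_n:n\ge j\}\subset\langle\mathcal{P}\rangle\subset\widehat{U}$, so $\{A_n\}$ is eventually in $\langle\mathcal{P}\rangle$; and $\mathcal{B}$ clearly covers $\mathcal{S}(X)$ since $\mathcal{N}$ covers $X$ (for $A=\bigcup_{n=1}^k S_n$, picking $P\in\mathcal{N}$ with $A\subset P$ — available because $\{A\}$ is a convergent constant sequence and $X\in$ any open cover situation, or more carefully using that $A$ is compact and $\mathcal{N}$ a network — gives $A\in\langle P\rangle\in\mathcal{B}$). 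I expect the main obstacle to be purely bookkeeping: making sure the coordinatewise convergence from Lemma~\ref{12} is genuinely available (which needs the preliminary step of discarding the finitely many $A_n\notin\langle V_1,\dots,V_N\rangle$), and verifying the nonemptiness condition $A_n\cap P_m\ne\emptyset$ cleanly rather than the easier containment condition. One subtlety worth stating explicitly is that $\mathcal{B}$ as defined should be taken to range only over finite subfamilies $\mathcal{P}$ of $\mathcal{N}$ for which $\langle\mathcal{P}\rangle$ is nonempty, or equivalently we simply note that the particular $\langle\mathcal{P}\rangle$ produced is nonempty since it contains $A$.
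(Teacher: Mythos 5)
Your proposal is correct and follows essentially the same route as the paper: Lemma~\ref{11} to reduce to a basic neighborhood $\langle V_1,\dots,V_N\rangle$, Lemma~\ref{12} for coordinatewise convergence $A_n\cap V_m\to A\cap V_m$, the $CS$-network applied in each coordinate to get $P_m$ with $(A\cap V_m)\cup\bigcup_{n\ge j_m}(A_n\cap V_m)\subset P_m\subset V_m$, and then taking the maximum index to conclude $\{A\}\cup\{A_n:n\ge j\}\subset\langle P_1,\dots,P_N\rangle\subset\widehat{U}$. The extra bookkeeping you supply (discarding an initial segment so the $A_n$ lie in $\langle V_1,\dots,V_N\rangle$, and the explicit check that $A_n\cap P_m\neq\emptyset$) only makes explicit what the paper leaves implicit.
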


\begin{proof}
Suppose a sequence $\{A_{n}\}_{n\in \mathbb{N}}\subset \mathcal{S}(X)$ converges to a point $A$ in $\mathcal{S}(X)$. Put $A=\bigcup _{n=1}^{k} S_{n}$, and let $\widehat{\mathcal{U}}$ be a neighborhood of $A$ in $\mathcal{S}(X)$. By Lemma~\ref{11}, there exists mutually disjoint open sets $V_{1}, ..., V_{N}$ in $X$ such that $A\in\langle V_{1}, ..., V_{N}\rangle \subset \widehat{\mathcal{U}}$. Since $\{A_{n}\}_{n\in N}$ converges to $A$, we can find $N_{0}\in\mathbb{N}$ such that $A_{n}\in\langle V_{1},..., V_{N}\rangle$ for any $n>N_{0}$. It follows from Lemma~\ref{12} that $\{A_{n}\cap V_{m}\}_{n\in \mathbb{N}}$ converges to $A\cap V_{m}$ in $X$ for any $m\in\{1,...,N\}$. Since $\mathcal{N}$ is a $CS$-network for $X$, we conclude that $$(A\cap V_{m})\cup \bigcup\{A_{n}\cap V_{m}: n>N_{m}\}\subset P_{m}\subset V_{m}$$ for some $N_{m}\in \mathbb{N}$ and $P_{m}\in \mathcal{N}$. Set $M=\max\{N_{m}: m\leq N\}$, then $$\{A\}\cup \{A_{n}:n>M\}\subset \langle P_{1},...,P_{N}\rangle\subset \langle V_{1},...,V_{n}\rangle\subset \widehat{\mathcal{U}}.$$ Consequently, $\mathcal{B}$ is a $cs$-network of $\mathcal{S}(X)$.
\end{proof}

 \smallskip
\section{Some cardinal invariants on $\mathcal{S}(X)$}
In this section, we mainly discuss some cardinal invariants on $\mathcal{S}(X)$, such as, the pseudocharacter, the character, the $sn$-character, the $sn\omega$-weight, etc. First, we recall some concepts.

\begin{definition}
Let $X$ be a space.
\begin{enumerate}
\item $\pi (X)=\omega+min\{|\mathcal{B}|:\mathcal{B}\ \mbox{is}\ \pi\mbox{-base}\ \mbox{of}\ X\}$.

\smallskip
\item $d(X)=\omega+min\{|S|:S\ \mbox{is a dense subset of}\ X\}$.

\smallskip
\item $\chi(x, X)=\omega+min\{|\mathcal{U}|:\mathcal{U}\ \mbox{is a neighborhood base of}\ x\ \mbox{in}\ X\}$.

\smallskip
\item $\psi(x, X)=\omega+min\{|\mathcal{G}|:\mathcal{G}\ \mbox{is a family of open sets in}\ X\ \mbox{such that}\ \cap\mathcal{G}=\{x\}\}$.

\smallskip
\item $\chi(X)=sup\{\chi (x, X):x\in X\}$; in particular, we say that $X$ is {\it first-countable} if $\chi(X)\leq\aleph_{0}$.

\smallskip
\item $\psi(X)=sup\{\psi(x, X):x\in X\}$; in particular, we say that $X$ is {\it of countable pseudocharacter} if $\psi(X)\leq\aleph_{0}$.

\smallskip
\item $\omega(X)=\omega+min\{|\mathcal{B}|:\mathcal{B}\ \mbox{is a base of}\ X\}$.
\end{enumerate}
\end{definition}

Clearly, we have the following proposition.

\begin{proposition}
Let $X$ be a space. Then $d(X)=d(\mathcal{S}(X))$.
\end{proposition}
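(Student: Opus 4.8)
The plan is to prove the two inequalities $d(\mathcal{S}(X)) \le d(X)$ and $d(X) \le d(\mathcal{S}(X))$ separately, exhibiting explicit dense subsets in each direction. For the inequality $d(\mathcal{S}(X)) \le d(X)$, I would start with a dense subset $D \subset X$ with $|D| = d(X)$ and consider the family $\mathcal{F}_n(D)$ of all finite nonempty subsets of $D$. Since $|D| \ge \omega$ (by the convention in the definition that $d(X) = \omega + \min\{\dots\}$), we have $|\mathcal{F}_n(D)| = |D| = d(X)$. Note that every finite subset of $X$ is a (trivial) finite union of convergent sequences, so $\mathcal{F}_n(D) \subset \mathcal{S}(X)$. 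The claim is then that $\mathcal{F}_n(D)$ is dense in $\mathcal{S}(X)$: given a nonempty basic open set $\langle V_1, \dots, V_r \rangle$ of $\mathcal{S}(X)$, each $V_i$ is a nonempty open subset of $X$, hence meets $D$ in some point $d_i$; then $F = \{d_1, \dots, d_r\}$ is a finite subset of $D$ with $F \subset \bigcup_{i=1}^r V_i$ and $F \cap V_j \ne \emptyset$ for every $j$, so $F \in \langle V_1, \dots, V_r \rangle \cap \mathcal{F}_n(D)$. This shows $d(\mathcal{S}(X)) \le |\mathcal{F}_n(D)| = d(X)$.

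For the reverse inequality $d(X) \le d(\mathcal{S}(X))$, I would take a dense subset $\mathcal{D} \subset \mathcal{S}(X)$ with $|\mathcal{D}| = d(\mathcal{S}(X))$ and form $D = \bigcup \mathcal{D} \subset X$, i.e., the union of all the sets belonging to $\mathcal{D}$. Each member of $\mathcal{D}$ is a finite union of convergent sequences, hence a countable subset of $X$, so $|D| \le |\mathcal{D}| \cdot \omega = d(\mathcal{S}(X))$. The claim is that $D$ is dense in $X$: let $U$ be a nonempty open subset of $X$, and consider the basic open set $\langle X, U \rangle$ of $\mathcal{S}(X)$ (here I am implicitly using that $X$ itself is open, and $\langle X, U\rangle = \{S \in \mathcal{S}(X) : S \cap U \ne \emptyset\}$ is nonempty since, e.g., any one-point set in $U$ belongs to it). Since $\mathcal{D}$ is dense in $\mathcal{S}(X)$, there is some $S \in \mathcal{D} \cap \langle X, U \rangle$, and then $S \cap U \ne \emptyset$, so $D \cap U \supseteq S \cap U \ne \emptyset$. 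Hence $D$ is dense in $X$ and $d(X) \le |D| \le d(\mathcal{S}(X))$.

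Combining the two inequalities gives $d(X) = d(\mathcal{S}(X))$. I do not expect any genuine obstacle here; the proof is essentially a matter of writing down the right dense sets and checking them against basic open sets of the Vietoris topology. The only points requiring a little care are: (i) making sure the cardinality bookkeeping respects the ``$\omega + {}$'' normalization built into the definition of $d$, so that adjoining countably many points or taking finite subsets does not increase the cardinal; and (ii) correctly using the form of basic Vietoris open sets $\langle V_1,\dots,V_r\rangle$ — in particular remembering that membership requires both $S \subset \bigcup_i V_i$ \emph{and} $S$ meeting each $V_j$, which is exactly why finite subsets (rather than single points) are the natural dense family in the forward direction.
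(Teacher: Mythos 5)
Your proof is correct, and in fact the paper offers no argument at all for this proposition (it is stated as "clearly"), so your write-up simply supplies the standard details the authors left implicit: finite subsets of a dense set of $X$ are dense in $\mathcal{S}(X)$, and the union of a dense family in $\mathcal{S}(X)$ is dense in $X$, with the cardinality bookkeeping handled by the $\omega+{}$ normalization and the countability of each member of $\mathcal{S}(X)$. Both directions check out against the Vietoris basic open sets exactly as you describe, so there is nothing to correct.
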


The following three theorems show that the relations of the character, the pseudocharacter and the $\pi$-character in a space $X$ and its hyperspace $\mathcal{S}(X)$. From \cite[Theorem 1.2]{IN}, we can find a first-countable space $X$ and a space $Y$ with countable pseudocharacter such that $\mathcal{K}(X)$ is not first-countable and $\mathcal{K}(Y)$ is not of countable pseudocharacter respectively. However, the following Theorems~\ref{mingti 10} and \ref{mingti 7} show that the situations are different for the class of hyperspaces of finite unions of convergent sequences.

\begin{theorem}\label{mingti 10}
$\psi(\mathcal{S}(X))=\psi(X)$.
\end{theorem}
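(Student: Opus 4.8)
The plan is to prove the two inequalities $\psi(\mathcal{S}(X))\leq\psi(X)$ and $\psi(X)\leq\psi(\mathcal{S}(X))$ separately, the latter being essentially trivial since $X$ embeds into $\mathcal{S}(X)$ as a closed subspace (identifying $x$ with the singleton $\{x\}$, or with a trivial convergent sequence), and pseudocharacter does not increase when passing to a subspace. The substance is in the first inequality.

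For $\psi(\mathcal{S}(X))\leq\psi(X)$, let $\kappa=\psi(X)\geq\aleph_0$ and fix $A=\bigcup_{n=1}^k S_n\in\mathcal{S}(X)$ in its standard form. I would build a family $\mathcal{G}$ of open neighborhoods of $A$ in $\mathcal{S}(X)$ with $|\mathcal{G}|\leq\kappa$ and $\bigcap\mathcal{G}=\{A\}$. The first ingredient: since $B=A$ is a countable compact (metrizable) subset of $X$ — or more simply just a countable set — and $X$ has pseudocharacter $\leq\kappa$, for each point $a\in A$ choose a family $\mathcal{G}_a$ of $\leq\kappa$ open sets in $X$ with $\bigcap\mathcal{G}_a=\{a\}$; since $A$ is countable, the union $\bigcup_{a\in A}\mathcal{G}_a$ still has size $\leq\kappa$. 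From finite intersections of these one can manufacture, for each $a$, a family of open sets separating $a$ from every other point of $A$, and then by compactness extract for the whole set $A$ a family $\{U_\alpha:\alpha<\kappa\}$ of open neighborhoods of $A$ in $X$ with $\bigcap_\alpha U_\alpha$ meeting $A$ in exactly $A$ in a suitably controlled way; the key point to track is that $\bigcap_\alpha\langle X,U_\alpha\rangle$ forces any $S$ in it to be contained in $\overline{A}$-nearby region and to meet every "cell" around each $a$.

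The second ingredient handles the requirement that $S$ not miss any point of $A$ and not have "extra" points: use neighborhoods of the form $\langle V_1,\dots,V_k, W\rangle$ where the $V_i$ are disjoint open sets shrinking to the limit points $s_i$ (and to the isolated/finite part of $A$), and $W$ ranges over the family from the first step. By Lemma~\ref{11} every neighborhood of $A$ in $\mathcal{S}(X)$ refines to one of the form $\langle V_1,\dots,V_N\rangle$, so it suffices to intersect down to $\{A\}$ using countably-many-times-$\kappa$-many such basic sets. Concretely I would take, for each $i\leq k$, a family of $\leq\kappa$ disjoint-from-the-other-limit-points open sets $V_i^\beta\ni s_i$ with $\bigcap_\beta V_i^\beta=\{s_i\}$ (possible since $\psi(s_i,X)\leq\kappa$ and, using that $A$ is compact metrizable, we can also arrange a countable neighborhood base if we like — though pseudocharacter suffices), together with neighborhoods isolating the finitely many non-limit points of $A$; then the collection $\mathcal{G}$ of all basic neighborhoods $\langle V_1^{\beta_1},\dots,V_k^{\beta_k},U_\alpha\rangle$ with $\beta_1,\dots,\beta_k,\alpha<\kappa$ has cardinality $\leq\kappa$, and I claim $\bigcap\mathcal{G}=\{A\}$: any $S$ in the intersection is contained in $\bigcup_i V_i^{\beta_i}\cup U_\alpha$ for all indices, meets each $V_i^{\beta_i}$, hence accumulates only at the $s_i$ and contains each $s_i$; controlling $U_\alpha$ pins down the remaining finitely many points, giving $S=A$.

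The main obstacle I anticipate is the bookkeeping in the second ingredient: showing that finitely many "limit-point cells" plus one auxiliary neighborhood genuinely force $S=A$, i.e. that $S$ cannot contain a stray convergent sequence hiding inside $U_\alpha\setminus\bigcup_i V_i^{\beta_i}$ near a point of $A$ that is not one of the $s_i$. This is where one must use that $A$ is compact metrizable (Lemma~\ref{16} is not directly about $A$ alone, but $A\in\mathcal{S}(X)$ is itself a finite union of convergent sequences, hence compact metrizable), so $A$ has countable pseudocharacter as a subspace and the finitely many isolated-in-$A$ points can be separated off with $\leq\kappa$ (indeed $\leq\omega$) open sets; combining this with the disjointness from Lemma~\ref{11} closes the gap. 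I would also remark that the argument in fact shows $\psi(A,\mathcal{S}(X))\leq\psi(X)$ pointwise, from which $\psi(\mathcal{S}(X))\leq\psi(X)$ follows by taking the supremum, and the reverse inequality from the closed embedding $x\mapsto\{x\}$ completes the proof.
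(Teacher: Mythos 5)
Your reverse inequality and your exclusion of \emph{extra} points are fine: with $\bigcap_{\beta}V_i^{\beta}=\{s_i\}$ and a family $\{U_\alpha\}$ of open sets containing $A$ whose intersection adds no new points (obtained, as you gesture, from finite subcovers of $A$ by members of the pseudocharacter families), any $S$ in all your basic sets satisfies $S\subset A$. The genuine gap is on the other side: nothing in your family forces $A\subset S$. Your basic sets $\langle V_1^{\beta_1},\dots,V_k^{\beta_k},U_\alpha\rangle$ have only $k+1$ slots, with cells only around the limit points $s_1,\dots,s_k$, and your proposal rests on the false premise that $A$ has ``finitely many non-limit points'': a nontrivial convergent sequence contributes infinitely many points isolated in $A$ (the finite leftover set in Lemma~\ref{11} is finite only relative to a \emph{fixed} choice of $V_1,\dots,V_k$, and grows as these shrink). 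Concretely, if $a$ is any sequence term of $A$ (not one of the $s_i$), then $S=A\setminus\{a\}$ is closed, lies in $\mathcal{S}(X)$, contains every $s_i$, and is contained in every $U_\alpha\supset A$; hence it belongs to every member of your family, so $\bigcap\mathcal{G}\neq\{A\}$. The slots involving $U_\alpha$ cannot ``pin down'' membership of individual points of $A$ in $S$, since $U_\alpha$ contains all of $A$ and the Vietoris conditions relative to a superset of $A$ are vacuous for any $S\subset A$. A second, subtler problem: even at a limit point, a mere pseudocharacter family $\{V_1^\beta\}$ at $s_1$ need not contain a member disjoint from an \emph{infinite} closed $S\subset A$ missing $s_1$ (each member may contain a tail of another sequence $S_2$ without containing $s_2$), so your parenthetical ``though pseudocharacter suffices'' is not correct.

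The paper's proof repairs exactly these points, and you would need the same ingredients: enumerate \emph{all} points of $K=A$ as $\{a_n:n\in\mathbb{N}\}$; at each $a_n$ take a pseudocharacter family $\mathscr{B}_n$ in $X$ \emph{augmented by} a countable subfamily $\mathscr{B}_n'$ whose traces on $K$ form a local base at $a_n$ in $K$ (available because $K$ is compact metrizable by Lemma~\ref{16}); then take
$\mathscr{P}=\{\langle\mathscr{B}'\rangle:\mathscr{B}'\in\mathscr{B}^{<\omega},\ K\subset\bigcup\mathscr{B}',\ K\cap U\neq\emptyset\ \mbox{for all}\ U\in\mathscr{B}'\}$ with $\mathscr{B}=\bigcup_n\mathscr{B}_n$, so the finite families of cells range over \emph{all} points of $A$, not just the $s_i$. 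Extra points are excluded by extracting a finite subcover of $K$ by sets missing the stray point (your first ingredient), and a missing point $a_n$ is excluded because some $B\in\mathscr{B}_n'$ has trace missing the compact set $L\subset K$ with $a_n\notin L$, and $\langle\mathscr{B}'\cup\{B\}\rangle\in\mathscr{P}$ then rejects $L$ via the ``must meet $B$'' condition. Without letting the cells run over every point of $A$ and without the trace-base refinement, your family cannot achieve $\bigcap\mathcal{G}=\{A\}$, so the proposal as written does not prove $\psi(\mathcal{S}(X))\leq\psi(X)$.
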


\begin{proof}
Clearly, we have $\psi(\mathcal{S}(X))\geq\psi(X)$. It suffices to prove that $\psi(\mathcal{S}(X))\leq\psi(X)$. Let $\kappa=\psi(X)$. Take an arbitrary $K\in \mathcal{S}(X)$. Denote $K$ by $\{a_{n}: n\in\mathbb{N}\}$. For each $n\in \mathbb{N}$, there exist a family $\mathscr{B}_{n}$ of open sets in $X$ and a countable subfamily $\mathscr{B}_{n}^{\prime}\subset \mathscr{B}_{n}$ such that $|\mathscr{B}_{n}|\leq\kappa$, $\{a_{n}\}=\bigcap\mathscr{B}_{n}$ and  $\mathscr{B}_{n}^{\prime}|_{K}$ is a local base at $a_{n}$ in $K$. Put $\mathscr{B}=\bigcup_{n\in\mathbb{N}}\mathscr{B}_{n}$ and $$\mathscr{P}=\{\langle\mathscr{B}^{\prime}\rangle: \mathscr{B}^{\prime}\in\mathscr{B}^{<\omega},\ K\subset\bigcup\mathscr{B}^{\prime}\ \mbox{and}\ K\cap U\neq\emptyset\ \mbox{for any}\ U\in\mathscr{B}^{\prime}\},$$where $\langle\mathscr{B}^{\prime}\rangle=\langle U_{1}, \cdots, U_{m}\rangle$ if $\mathscr{B}^{\prime}=\{U_{i}: i=1, \cdots, m\}$. Clearly, $|\mathscr{P}|\leq\kappa$. We claim that $\{K\}=\bigcap\mathscr{P}$. Indeed, it is obvious that $K\in\bigcap\mathscr{P}$. Take any $L\in \mathcal{S}(X)\setminus\{K\}$. If $L\setminus K\neq\emptyset$, then take any $x\in L\setminus K$. Hence, for each $n\in \mathbb{N}$, there exists $U_{n}\in\mathscr{B}_{n}$ such that $x\not\in U_{n}$. Obviously, $\{U_{n}: n\in\mathbb{N}\}$ is an open cover of $K$, hence there exists a finite subcover $\{U_{n_{i}}: i\leq N\}$ for some $N\in\mathbb{N}$ such that $K\cap U_{i}\neq\emptyset$ for each $i\leq N$. Then $\langle U_{n_{1}}, \ldots, U_{n_{N}}\rangle\in \mathscr{P}$, but  $L\not\in\langle U_{n_{1}}, \ldots, U_{n_{N}}\rangle$, thus $L\not\in\bigcap\mathscr{P}$. If $L\subset K$ and $K\setminus L\neq\emptyset$, then take any $x\in K\setminus L$. Let $x=a_{n}$. From the compactness of $L$ and $K$, it easily verify that there exists $B\in \mathscr{B}_{n}^{\prime}$ such that $x\in B$ and $B\cap L=\emptyset$. Then, for any $\mathscr{B}^{\prime}\in\mathscr{B}^{<\omega}$, we have $L\not\in\langle\mathscr{B}^{\prime}\cup\{B\}\rangle$, hence $L\not\in\bigcap\mathscr{P}$.

Therefore, $\bigcap\mathscr{P}=\{K\}$.
\end{proof}

\begin{theorem}\label{mingti 7}
$\chi(\mathcal{S}(X))=\chi(X)$.
\end{theorem}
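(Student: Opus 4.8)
The plan is to prove $\chi(\mathcal{S}(X)) = \chi(X)$ by establishing the two inequalities separately. The inequality $\chi(\mathcal{S}(X)) \geq \chi(X)$ should follow from the fact that $X$ embeds into $\mathcal{S}(X)$ as a closed subspace via $x \mapsto \{x\}$ (the singletons, which are trivial convergent sequences), together with the observation that $\langle V \rangle \cap \{\{x\} : x \in X\} = \{\{x\} : x \in V\}$ so neighborhood bases transfer; alternatively it is immediate since taking a local base at $\{x\}$ in $\mathcal{S}(X)$ and intersecting each member with the copy of $X$ yields a local base at $x$. So the real content is $\chi(\mathcal{S}(X)) \leq \chi(X)$.

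For the upper bound, let $\kappa = \chi(X)$ and fix $A = \bigcup_{n=1}^k S_n \in \mathcal{S}(X)$ in the standard form, writing $A = \{a_m : m \in \mathbb{N}\}$ as a countable set. For each point $a \in A$, choose a neighborhood base $\mathscr{B}_a$ at $a$ in $X$ of size $\leq \kappa$; I would close it under finite intersections so it is directed. The natural candidate for a local base at $A$ in $\mathcal{S}(X)$ is the family $\mathscr{P}$ of all sets $\langle V_1, \dots, V_N \rangle$ where each $V_i$ comes from some $\mathscr{B}_a$, the $V_i$ are chosen to be pairwise disjoint, and $\langle V_1, \dots, V_N\rangle$ satisfies the conclusion of Lemma~\ref{11} relative to $A$ (namely $a_i \in V_i$ for $i \leq k$, and for $k < i \leq N$ the set $V_i$ meets $A \setminus \bigcup_{j\leq k} V_j$ in exactly one point). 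Since there are only countably many points in $A$, countably many ways to choose the integer $N$, and $\leq \kappa$ choices from each $\mathscr{B}_a$, we get $|\mathscr{P}| \leq \kappa$. That $\mathscr{P}$ is a local base is exactly what Lemma~\ref{11} delivers: given any open $\widehat{\mathcal{U}} \ni A$, Lemma~\ref{11} produces disjoint open $W_1, \dots, W_N$ in $X$ with $A \in \langle W_1, \dots, W_N\rangle \subset \widehat{\mathcal{U}}$ satisfying conditions (1)--(3); then for each $i \leq k$ shrink $W_i$ to some $V_i \in \mathscr{B}_{a_i}$ with $V_i \subset W_i$, and for $k < i \leq N$ (where $W_i$ captures a single non-limit point $a_{j(i)}$ of $A$) replace $W_i$ by $V_i \in \mathscr{B}_{a_{j(i)}}$ with $a_{j(i)} \in V_i \subset W_i$; shrinking disjoint sets keeps them disjoint, and one checks $A \in \langle V_1, \dots, V_N \rangle \subset \langle W_1, \dots, W_N\rangle \subset \widehat{\mathcal{U}}$, with $\langle V_1, \dots, V_N\rangle \in \mathscr{P}$.

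The one subtlety to handle carefully is verifying that $A$ really does belong to $\langle V_1, \dots, V_N \rangle$ after the shrinking --- i.e. that $A \subset \bigcup_i V_i$ and $A \cap V_i \neq \emptyset$ for every $i$. The second condition is immediate since $a_i \in V_i$. For the first, condition (2) of Lemma~\ref{11} guarantees that $A \subset \bigcup_{j\leq k} W_j \cup (A \setminus \bigcup_{j \leq k} W_j)$ where the latter finite set is exactly the points $a_{j(i)}$, each now lying in its $V_i$; but I must also ensure the limit points and tails stay covered, which is where I use that for $i \leq k$ the relevant tail of $S_i$ near $s_i$ lies in any neighborhood of $s_i$, hence in $V_i$ once $V_i$ is a neighborhood of $s_i = a_i$ --- this is the same compactness/eventual-containment argument used inside the proof of Lemma~\ref{11} via \cite[Lemma 2.3.1]{M}. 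The main obstacle is thus bookkeeping rather than a genuine mathematical difficulty: making the correspondence between the index sets of the two tuples precise and confirming the shrunken basic neighborhood still contains $A$; once that is set up, the cardinality count and the containment chain finish the proof.
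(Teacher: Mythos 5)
Your lower bound and your cardinality count are fine, and your overall strategy (reduce to Lemma~\ref{11} and replace the open sets $W_i$ by basic ones) is the same as the paper's. But the key verification step has a genuine gap: after shrinking each $W_i$ ($i\leq k$) to a \emph{single} basic neighborhood $V_i\in\mathscr{B}_{s_i}$ with $V_i\subset W_i$, the inclusion $A\subset\bigcup_{i=1}^{N}V_i$ fails in general, and your proposed remedy does not repair it. The tail argument only shows that a tail of $S_i$ lies in $V_i$; it says nothing about the finitely many points of $A\cap W_i$ that are \emph{not} in $V_i$ (early terms of $S_i$, or stray points of other sequences $S_m$ that happen to fall in $W_i$). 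Those points are also not covered by the sets $V_i$ with $k<i\leq N$, since by Lemma~\ref{11} each such set meets $A$ only in its one designated point lying outside $\bigcup_{j\leq k}W_j$. Already the simplest example shows the failure: let $X$ be a single convergent sequence $\{0\}\cup\{1/n:n\in\mathbb{N}\}$ and $A=X$, so $k=N=1$ and $W_1=X$; shrinking $W_1$ to a small basic neighborhood $V_1$ of $0$ gives $A\not\subset V_1$, so $A\notin\langle V_1\rangle$. Hence the tuple you construct need not be a neighborhood of $A$ at all, and (as you defined $\mathscr{P}$ to consist of tuples in Lemma~\ref{11} form containing $A$) it need not even belong to $\mathscr{P}$.

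The gap is fixable, and the paper's proof shows how: do not replace each $W_i$ by one basic set, but cover $K\cap W_i$ by basic sets $U_{x,i}\subset W_i$, one for \emph{each} point $x\in K\cap W_i$, and use compactness of $K$ to extract a finite subcover $\mathscr{O}'$; then $\langle\mathscr{O}'\rangle$ contains $K$, lies inside $\langle W_1,\ldots,W_N\rangle$ (each member of $\mathscr{O}'$ sits inside exactly one $W_i$, and disjointness of the $W_i$ forces the subcover to contain, for every $i$, a member inside $W_i$), and is of the form $\langle\mathscr{B}'\rangle$ with $\mathscr{B}'$ a finite subfamily of $\bigcup_{a\in K}\mathscr{B}_a$. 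Correspondingly, the candidate family must be taken as all $\langle\mathscr{B}'\rangle$ with $\mathscr{B}'$ a finite subfamily of the union of the bases that covers $K$ and each of whose members meets $K$ — with no disjointness or ``exactly one point'' constraints — which still has cardinality at most $\kappa$. Equivalently, you could salvage your version by appending, for each of the finitely many omitted points $x\in A\cap(W_i\setminus V_i)$, a basic neighborhood of $x$ contained in $W_i$; but as written, the step ``one checks $A\in\langle V_1,\ldots,V_N\rangle$'' is false.
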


\begin{proof}
Clearly, we have $\chi(X)\leq\chi(\mathcal{S}(X))$. It only need to prove that $\chi(\mathcal{S}(X))\leq\chi(X)$. Take an arbitrary $K\in \mathcal{S}(X)$. Denote $K$ by $\{a_{n}: n\in\mathbb{N}\}$. For each $n\in \mathbb{N}$, there exists a family $\mathscr{B}_{n}$ of open sets in $X$ such that $|\mathscr{B}_{n}|\leq\kappa$ and $\mathscr{B}_{n}$ is a base at point $a_{n}$ in $X$. Put $\mathscr{B}=\bigcup_{n\in\mathbb{N}}\mathscr{B}_{n}$ and $$\mathscr{P}=\{\langle\mathscr{B}^{\prime}\rangle: \mathscr{B}^{\prime}\in\mathscr{B}^{<\omega},\ K\subset\bigcup\mathscr{B}^{\prime}\ \mbox{and}\ K\cap U\neq\emptyset\ \mbox{for any}\ U\in\mathscr{B}^{\prime}\},$$ where $\langle\mathscr{B}^{\prime}\rangle=\langle U_{1}, \cdots, U_{m}\rangle$ if $\mathscr{B}^{\prime}=\{U_{i}: i=1, \cdots, m\}$. Clearly, $|\mathscr{P}|\leq\kappa$. We claim that $\mathscr{P}$ is a base at point $K$ in $\mathcal{S}(X)$. Indeed, for any open neighborhood $\widehat{U}$ of $K$ in $\mathcal{S}(X)$. From Lemma~\ref{11}, there exist disjoint open sets $V_{1}, \cdots, V_{N}$ in $X$ such that $K\in\langle V_{1}, \cdots, V_{N}\rangle\subset \widehat{U}$. For each $i\leq N$ and any $x\in V_{i}\cap K$, there exists an open neighborhood $U_{x, i}\in\mathscr{B}_{n}$ such that $U_{x, i}\subset V_{i}$. Put $\mathscr{O}=\{U_{x, i}: x\in V_{i}\cap K, i\leq N\}$. Then $\mathscr{O}$ is an open cover of $K$, therefore there exists a finite subcover $\mathscr{O}^{\prime}$. Then it easily verify that $\langle\mathscr{O}^{\prime}\rangle\subset \langle V_{1}, \cdots, V_{N}\rangle\subset \widehat{U}$. Therefore, $\mathscr{P}$ is a base at point $K$ in $\mathcal{S}(X)$. By the arbitrary choice of $K$, we have that $\chi(\mathcal{S}(X))\leq\chi(X)$.
\end{proof}

By \cite[Theorem 4]{TM}, we have the following theorem.

\begin{theorem}\label{mingti 9}
(1) $\pi(X)=\pi(\mathcal{S}(X))$;  (2) $\omega(X)=\omega(\mathcal{S}(X))$.
\end{theorem}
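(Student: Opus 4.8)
The plan is to deduce both statements from the cited result \cite[Theorem 4]{TM}, which (in the setting of that paper) asserts that for the Vietoris hyperspace of a suitable class of subsets, the $\pi$-weight and the weight of the hyperspace coincide with those of the base space. Concretely, I would proceed as follows. First I would verify that $\mathcal{S}(X)$ carries exactly the Vietoris topology restricted from $2^{X}$ (or from $\mathcal{K}(X)$, since every element of $\mathcal{S}(X)$ is compact), so that the basic open sets are precisely the traces $\langle U_{1},\dots,U_{r}\rangle\cap\mathcal{S}(X)$ of the standard Vietoris basic sets. This places $\mathcal{S}(X)$ inside the framework to which \cite[Theorem 4]{TM} applies, since $\mathcal{S}(X)$ is a subspace of $\mathcal{K}(X)$ containing $\mathcal{F}_{n}(X)$, and the finite subsets are dense in $\mathcal{S}(X)$ in the Vietoris topology.

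For part~(2), the inequality $\omega(X)\le\omega(\mathcal{S}(X))$ is immediate because $X$ embeds in $\mathcal{S}(X)$ via $x\mapsto\{x\}$ (as a trivial convergent sequence), and this map is a homeomorphism onto its image; weight is monotone under taking subspaces. For the reverse inequality, given a base $\mathcal{B}$ of $X$ with $|\mathcal{B}|=\omega(X)\ge\aleph_{0}$, the family $\{\langle\mathcal{P}\rangle:\mathcal{P}\in\mathcal{B}^{<\omega}\}$ has cardinality $\omega(X)$, and by Lemma~\ref{11} every basic Vietoris neighborhood $\langle V_{1},\dots,V_{N}\rangle$ of a point $A\in\mathcal{S}(X)$ can be refined: choose for each $x\in A\cap V_{i}$ a member of $\mathcal{B}$ between $x$ and $V_{i}$, pass to a finite subcover of the compact set $A$, and observe the resulting $\langle\mathcal{P}\rangle$ lies between $A$ and $\langle V_{1},\dots,V_{N}\rangle$ (using \cite[Lemma 2.3.1]{M} exactly as in the proof of Theorem~\ref{mingti 7}). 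Hence this family is a base of $\mathcal{S}(X)$, giving $\omega(\mathcal{S}(X))\le\omega(X)$.

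For part~(1), the argument is parallel but uses $\pi$-bases in place of bases. Again $\pi(X)\le\pi(\mathcal{S}(X))$ follows from the embedding of $X$, or alternatively from the fact that intersecting a $\pi$-base of $\mathcal{S}(X)$ with the (dense) copy of $X$ need not directly work, so I would instead argue: any nonempty open $U\subset X$ gives the nonempty open $\langle U\rangle$ in $\mathcal{S}(X)$, and a $\pi$-base element of $\mathcal{S}(X)$ inside $\langle U\rangle$ is of the form $\langle W_{1},\dots,W_{r}\rangle$ with all $W_{j}\subset U$, so the $W_{j}$'s (or just $W_{1}$) furnish a $\pi$-base of $X$ of the same cardinality. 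Conversely, given a $\pi$-base $\mathcal{B}$ of $X$, the family of all $\langle\mathcal{P}\rangle$ with $\mathcal{P}\in\mathcal{B}^{<\omega}$ is a $\pi$-base of $\mathcal{S}(X)$: inside any basic open $\langle V_{1},\dots,V_{N}\rangle$ pick $W_{i}\in\mathcal{B}$ with $W_{i}\subset V_{i}$ for each $i$, and then $\langle W_{1},\dots,W_{N}\rangle$ is a nonempty open subset of $\langle V_{1},\dots,V_{N}\rangle$ (it contains, e.g., the finite set with one point chosen in each $W_{i}$). This yields $\pi(\mathcal{S}(X))\le\pi(X)$.

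The only genuinely delicate point is making sure the cited \cite[Theorem 4]{TM} is being invoked in a form that covers $\mathcal{S}(X)$ rather than only $2^{X}$ or $\mathcal{K}(X)$; I expect the main obstacle to be checking that the density of $\mathcal{F}_{n}(X)$ in $\mathcal{S}(X)$ (together with $\mathcal{F}_{n}(X)\subset\mathcal{S}(X)\subset\mathcal{K}(X)$) lets the weight/$\pi$-weight computation pass to the intermediate subspace $\mathcal{S}(X)$ unchanged. This is where I would be most careful, though in fact the self-contained refinement arguments sketched above via Lemma~\ref{11} already give both equalities directly, so the reference merely streamlines the exposition.
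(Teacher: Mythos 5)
Your argument is correct, but it takes a different route from the paper: the paper offers no written proof at all for this theorem, deducing it in one line from \cite[Theorem 4]{TM}, which is a general statement about any hyperspace $\mathcal{H}$ with $\mathcal{F}_{n}(X)\subseteq\mathcal{H}\subseteq 2^{X}$ in the Vietoris topology; since every finite set is a union of finitely many trivial convergent sequences, $\mathcal{F}_{n}(X)\subseteq\mathcal{S}(X)\subseteq\mathcal{K}(X)\subseteq 2^{X}$ and the citation applies directly --- no density of $\mathcal{F}_{n}(X)$ in $\mathcal{S}(X)$ or passage through $\mathcal{K}(X)$ is needed, which resolves the ``delicate point'' you flag at the end. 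What your self-contained version buys is a verification inside the paper's own framework (Lemma~\ref{11} plus \cite[Lemma 2.3.1]{M}), in the same spirit as the paper's proof of Theorem~\ref{mingti 7}. Two small points you gloss over but should state: (a) for $\pi(X)\le\pi(\mathcal{S}(X))$, a $\pi$-base member of $\mathcal{S}(X)$ need not itself be a basic Vietoris set, so first replace each member by a nonempty basic set $\langle W_{1},\dots,W_{r}\rangle$ inside it, and then justify that $\langle W_{1},\dots,W_{r}\rangle\subseteq\langle U\rangle$ forces every $W_{j}\subseteq U$ --- for $x\in W_{j}$ take a finite set with $x$ together with one point from each $W_{i}$, which lies in $\langle W_{1},\dots,W_{r}\rangle$ and hence in $U$; (b) for $\omega(\mathcal{S}(X))\le\omega(X)$, after passing to a finite subcover of $A$ you must ensure each $V_{i}$ still contains a chosen member of $\mathcal{B}$ meeting $A$; this is automatic because the $V_{i}$ from Lemma~\ref{11} are pairwise disjoint and each meets $A$ (or, without disjointness, simply add one basic set $B\subseteq V_{i}$ around a point of $A\cap V_{i}$ for each $i$). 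With these details filled in, both inequalities go through as you describe.
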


The following corollaries are straightforward from the above three theorems respectively.

\begin{corollary}
A space $X$ is first-countable if and only if the hyperspace $\mathcal{S}(X)$ is first countable.
\end{corollary}

\begin{corollary}
A space $X$ is of countable pseudocharacter if and only if the hyperspace $\mathcal{S}(X)$ is of countable pseudocharacter.
\end{corollary}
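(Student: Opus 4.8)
The plan is to obtain this corollary as an immediate consequence of Theorem~\ref{mingti 10}. Recall that, by definition, the assertion ``$X$ is of countable pseudocharacter'' is exactly the inequality $\psi(X)\le\aleph_0$, and similarly ``$\mathcal{S}(X)$ is of countable pseudocharacter'' means $\psi(\mathcal{S}(X))\le\aleph_0$. Since Theorem~\ref{mingti 10} gives the equality of cardinals $\psi(\mathcal{S}(X))=\psi(X)$, the condition $\psi(X)\le\aleph_0$ holds if and only if $\psi(\mathcal{S}(X))\le\aleph_0$ holds; this is precisely the claimed equivalence. Thus the proof reduces to a single line citing Theorem~\ref{mingti 10}, exactly as the preceding corollary reduces to Theorem~\ref{mingti 7}.

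Because the corollary is a direct restatement of Theorem~\ref{mingti 10} at the level $\aleph_0$, there is essentially no obstacle to overcome here --- all the work has already been done. For orientation, it is worth noting where that work sits: the inequality $\psi(\mathcal{S}(X))\ge\psi(X)$ is the easy half (a family of open sets in $\mathcal{S}(X)$ intersecting to $\{K\}$ traces, through the basic sets $\langle X,U\rangle$, to a family of open sets in $X$ intersecting to a prescribed singleton of $X$), while the nontrivial inequality $\psi(\mathcal{S}(X))\le\psi(X)$ is established in the proof of Theorem~\ref{mingti 10} by fixing $K=\{a_n:n\in\mathbb{N}\}\in\mathcal{S}(X)$, choosing at each $a_n$ both a pseudobase $\mathscr{B}_n$ in $X$ of size $\le\kappa=\psi(X)$ and a countable local base $\mathscr{B}_n'$ in the countable compact metrizable subspace $K$ (metrizability coming from Lemma~\ref{16}), and assembling from $\mathscr{B}=\bigcup_n\mathscr{B}_n$ the family $\mathscr{P}$ of basic open sets $\langle\mathscr{B}'\rangle$ that ``surround'' $K$: a set $L$ with $L\setminus K\neq\emptyset$ is separated from $K$ using the $\mathscr{B}_n$'s, and a proper compact subset $L\subsetneq K$ is separated using the $\mathscr{B}_n'$'s. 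When $\psi(X)\le\aleph_0$ every $\mathscr{B}_n$ can be taken countable, so $\mathscr{P}$ is countable and witnesses countable pseudocharacter of $\mathcal{S}(X)$ at $K$; but this special case is already contained in the cardinal equality, so no new argument is required.
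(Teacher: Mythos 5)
Your proposal is correct and matches the paper's approach exactly: the paper derives this corollary directly from Theorem~\ref{mingti 10}, noting that countable pseudocharacter is just the condition $\psi\le\aleph_0$, so the cardinal equality $\psi(\mathcal{S}(X))=\psi(X)$ immediately yields the equivalence. Your additional summary of where the work in Theorem~\ref{mingti 10} lies is accurate but not needed for the corollary itself.
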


\begin{corollary}
A space $X$ has a countable $\pi$-character if and only if the hyperspace $\mathcal{S}(X)$ has a countable $\pi$-character.
\end{corollary}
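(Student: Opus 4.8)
Throughout this plan, for a point $x$ of a space $Z$ I call a family $\mathscr{V}$ of nonempty open subsets of $Z$ a \emph{local $\pi$-base} at $x$ if every neighborhood of $x$ contains some member of $\mathscr{V}$ (the members themselves need not contain $x$), and I write $\pi\chi(x,Z)$ for the least cardinality of such a family; \emph{countable $\pi$-character} means $\pi\chi(x,Z)\le\aleph_0$ at every $x$. Because $\pi$-character is a genuinely local invariant which is not inherited by subspaces, the plan is to prove both implications directly, running parallel to the proof of Theorem~\ref{mingti 7} but with local bases replaced by local $\pi$-bases, rather than to quote the (global) $\pi$-weight identity of Theorem~\ref{mingti 9}. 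The single external tool I will use repeatedly is the containment criterion for basic Vietoris sets (cf. \cite[Lemma 2.3.1]{M}): $\langle W_1,\dots,W_r\rangle\subset\langle V_1,\dots,V_N\rangle$ holds precisely when $\bigcup_i W_i\subset\bigcup_j V_j$ and, for each $j$, some $W_i$ satisfies $W_i\subset V_j$.

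For the implication $\pi\chi(X)\le\aleph_0\Rightarrow\pi\chi(\mathcal{S}(X))\le\aleph_0$, I would fix $K\in\mathcal{S}(X)$ and use that $K$ is countable, say $K=\{a_n:n\in\mathbb{N}\}$. For each $n$ choose a countable local $\pi$-base $\mathscr{B}_n$ at $a_n$ in $X$, set $\mathscr{B}=\bigcup_n\mathscr{B}_n$ (still countable), and let $\mathscr{P}$ be the countable family of all nonempty basic sets $\langle\mathscr{B}'\rangle$ with $\mathscr{B}'$ a finite subset of $\mathscr{B}$. To check $\mathscr{P}$ is a local $\pi$-base at $K$, take any neighborhood $\widehat{U}$ of $K$; by Lemma~\ref{11} there are disjoint open $V_1,\dots,V_N$ with $K\in\langle V_1,\dots,V_N\rangle\subset\widehat{U}$, whence $K\cap V_j\ne\emptyset$ for each $j$. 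Picking $a\in K\cap V_j$ and then $W_j\in\mathscr{B}$ from the local $\pi$-base at $a$ with $W_j\subset V_j$, the containment criterion yields $\langle W_1,\dots,W_N\rangle\subset\langle V_1,\dots,V_N\rangle\subset\widehat{U}$, and this witness lies in $\mathscr{P}$. Thus $\pi\chi(K,\mathcal{S}(X))\le\aleph_0$ for every $K$.

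For the converse $\pi\chi(\mathcal{S}(X))\le\aleph_0\Rightarrow\pi\chi(X)\le\aleph_0$, I would exploit the singletons. Fix $x\in X$ and a countable local $\pi$-base $\mathscr{P}$ at $\{x\}$ in $\mathcal{S}(X)$; replacing each member by a nonempty basic open subset (which preserves the local $\pi$-base property, since shrinking members of a $\pi$-base keeps it a $\pi$-base) I may assume every $P\in\mathscr{P}$ has the form $\langle U_1^P,\dots,U_{r_P}^P\rangle$. Let $\mathscr{B}$ be the countable family of all open sets $U_i^P$ appearing in these representations. Given a neighborhood $W$ of $x$ in $X$, the set $\langle W\rangle=\{S\in\mathcal{S}(X):S\subset W\}$ is a neighborhood of $\{x\}$, so some $P\in\mathscr{P}$ satisfies $\langle U_1^P,\dots,U_{r_P}^P\rangle\subset\langle W\rangle$; by the containment criterion $\bigcup_i U_i^P\subset W$, so each $U_i^P\subset W$. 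Hence $U_1^P\in\mathscr{B}$ is a nonempty open subset of $W$, which shows $\mathscr{B}$ is a countable local $\pi$-base at $x$.

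The main obstacle is exactly the converse: since $\pi$-character does not pass to subspaces, one cannot merely restrict a local $\pi$-base of $\mathcal{S}(X)$ to the copy $\{\{x\}:x\in X\}$ of $X$. The device that rescues the argument is the observation that $\langle W\rangle$ is already a neighborhood of the singleton $\{x\}$, which forces every subordinate basic set to have \emph{all} of its constituent open sets contained in $W$; extracting those constituents then produces the required $\pi$-base downstairs. Two routine points must be verified along the way: that the extracted sets are nonempty (automatic, as $\pi$-base members are nonempty) and that the preliminary reduction to basic representatives preserves the local $\pi$-base property (it does, by the shrinking remark above).
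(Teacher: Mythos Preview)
Your argument is correct. The paper gives no explicit proof of this corollary: it simply states that the three corollaries are ``straightforward from the above three theorems respectively,'' and the only one of those theorems touching $\pi$-invariants is Theorem~\ref{mingti 9}, the global $\pi$-weight identity $\pi(X)=\pi(\mathcal{S}(X))$. As you rightly observe, that identity does not by itself yield the pointwise $\pi$-character equivalence, since countable $\pi$-character need not imply countable $\pi$-weight; so the paper's one-line justification is, at best, elliptical.

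Your approach instead adapts the method of Theorem~\ref{mingti 7} (the character equality) to local $\pi$-bases. The forward direction runs through Lemma~\ref{11} and the Vietoris containment criterion from \cite[Lemma 2.3.1]{M}, exactly parallel to the paper's treatment of character. The converse---where heredity fails and one cannot simply restrict to the singleton copy of $X$---is handled by the observation that $\langle W\rangle$ is already a neighborhood of $\{x\}$ in $\mathcal{S}(X)$, so any subordinate basic set $\langle U_1^P,\dots,U_{r_P}^P\rangle$ must have every $U_i^P\subset W$; extracting these constituents gives the required countable local $\pi$-base downstairs. Both directions are sound, and your proof actually supplies what the paper leaves to the reader. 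One minor point worth making explicit in the forward step is that $\langle W_1,\dots,W_N\rangle\neq\emptyset$, witnessed by any finite set meeting each $W_j$ (finite sets belong to $\mathcal{S}(X)$); you do allude to nonemptiness in your closing remarks, but stating it at the point of use would tighten the write-up.
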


Next we will consider some cardinal invariants of generalized metric spaces of the hyperspace $\mathcal{S}(X)$.

The smallest size $|\mathcal{N}|$ of an $sn$-network (resp. $so$-network, $cs$-network, $cs^{\ast}$-network) $\mathcal{N}$ at
a point $x\in X$ is called the {\it $sn$-character} (resp. {\it $so$-character, $cs$-character, $cs^{\ast}$-character}) of
$X$ at the point $x$ and is denoted by $sn_{\chi}(X, x)$ (resp. $so_{\chi}(X, x)$, $cs_{\chi}(X, x)$, $cs^{\ast}_{\chi}(X, x)$). The
cardinals $sn_{\chi}(X)=\sup_{x\in X}sn_{\chi}(X, x)$, $so_{\chi}(X)=\sup_{x\in X}so_{\chi}(X, x)$, $cs_{\chi}(X)=\sup_{x\in X}cs_{\chi}(X, x)$ and $cs^{\ast}_{\chi}(X)=\sup_{x\in X}cs^{\ast}_{\chi}(X, x)$ are called the {\it $sn$-character, $so$-character, $cs$-character and $cs^{\ast}$-character} of the topological space $X$, respectively. For the empty topological space $X=\emptyset$, we put $sn_{\chi}(X)=so_{\chi}(X)=cs_{\chi}(X)=cs^{\ast}_{\chi}(X)=1$. Moreover, we define the following some cardinal invariant on $X$.

\smallskip
$sn\omega(X)=\aleph_{0}+\min\{|\mathcal{N}|: \mathcal{N}\ \mbox{is an sn-network of}\ X\}$;

\smallskip
$so\omega(X)=\aleph_{0}+\min\{|\mathcal{N}|: \mathcal{N}\ \mbox{is an so-network of}\ X\}$;

\smallskip
$cs\omega(X)=\aleph_{0}+\min\{|\mathcal{N}|: \mathcal{N}\ \mbox{is a cs-network of}\ X\}$;

\smallskip
$cs^{\ast}\omega(X)=\aleph_{0}+\min\{|\mathcal{N}|: \mathcal{N}\ \mbox{is a}\ cs^{\ast}\mbox{-network of}\ X\}$.

\smallskip
The following question is interesting.

\begin{question}\label{q1}
Let $X$ be a topological space. Which the following equalities hold?

\smallskip
(1) $sn_{\chi}(X)=sn_{\chi}(\mathcal{S}(X))$;

\smallskip
(2) $so_{\chi}(X)=so_{\chi}(\mathcal{S}(X))$;

\smallskip
(3) $cs_{\chi}(X)=cs_{\chi}(\mathcal{S}(X))$;

\smallskip
(4) $cs^{\ast}_{\chi}(X)=cs^{\ast}_{\chi}(\mathcal{S}(X))$;

\smallskip
(5) $sn\omega(X)=sn\omega(\mathcal{S}(X))$;

\smallskip
(6) $so\omega(X)=so\omega(\mathcal{S}(X))$;

\smallskip
(7) $cs\omega(X)=cs\omega(\mathcal{S}(X))$;

\smallskip
(8) $cs^{\ast}\omega(X)=cs^{\ast}\omega(\mathcal{S}(X))$.
\end{question}

Now we will give some answers to Question~\ref{q1}. First, we give negative answers to (1) and (5) of Question~\ref{q1}.

\begin{example}
There exists an $snf$-countable space $X$ such that $\mathcal{S}(X)$ is not $snf$-countable; in particular, $sn\omega(\mathcal{S}(X))>sn\omega(X)$.
\end{example}

\begin{proof}
Let $X$ be the $S_{2}$-space. Then $X$ is $snf$-countable. It follows from Proposition~\ref{ming ti 5} in Section 6 that $\mathcal{S}(X)$ contains a copy of $S_{\omega}$. But $S_{\omega}$ is not $snf$-countable, hence $\mathcal{S}(X)$ is not $snf$-countable. Moreover, it is obvious that $sn\omega(\mathcal{S}(X))>sn\omega(X)$.
\end{proof}

However, the equalities (1) and (5) of Question~\ref{q1} in $\mathcal{F}(X)$ hold.

\begin{theorem}\label{tt}
For any space $X$, we have $sn_{\chi}(X)=sn_{\chi}(\mathcal{F}(X))$.
\end{theorem}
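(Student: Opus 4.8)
The plan is to prove the two inequalities $sn_\chi(X) \le sn_\chi(\mathcal F(X))$ and $sn_\chi(\mathcal F(X)) \le sn_\chi(X)$ separately. The first is the easy direction: the map $x \mapsto \{x\}$ embeds $X$ as a closed subspace of $\mathcal F(X)$ (finite subsets of size one form a subspace homeomorphic to $X$), and $sn$-character does not increase when passing to a subspace, so $sn_\chi(X) \le sn_\chi(\mathcal F(X))$ follows immediately. The substance of the theorem is the reverse inequality, and this is where I would concentrate the effort.

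For the reverse inequality, set $\kappa = sn_\chi(X)$ and fix an arbitrary $K = \{a_1, \dots, a_k\} \in \mathcal F(X)$, with the $a_i$ distinct. For each $i$, choose an $sn$-network $\mathscr N_i$ at $a_i$ in $X$ with $|\mathscr N_i| \le \kappa$; by refining we may assume each $\mathscr N_i$ is closed under finite intersections and consists of sequential neighborhoods of $a_i$. Using Hausdorffness, fix once and for all disjoint open sets $W_1, \dots, W_k$ with $a_i \in W_i$, and replace each member of $\mathscr N_i$ by its intersection with $W_i$ so that members of $\mathscr N_i$ and $\mathscr N_j$ are disjoint for $i \ne j$. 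Now define
$$\mathscr P_K = \{\langle P_1, \dots, P_k\rangle : P_i \in \mathscr N_i \text{ for each } i \le k\}.$$
Then $|\mathscr P_K| \le \kappa$, and $K \in \bigcap \mathscr P_K$. The claim to verify is that $\mathscr P_K$ is an $sn$-network at $K$ in $\mathcal F(X)$, i.e. (a) every member of $\mathscr P_K$ is a sequential neighborhood of $K$ in $\mathcal F(X)$, and (b) $\mathscr P_K$ satisfies the network condition: for every open $\widehat U \ni K$ in $\mathcal F(X)$ there is a member of $\mathscr P_K$ inside $\widehat U$.

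Part (b) is routine: given $\widehat U$, apply the $\mathcal F(X)$-analogue of Lemma~\ref{11} to get disjoint open $V_1, \dots, V_N$ with $K \in \langle V_1, \dots, V_N\rangle \subset \widehat U$; since $K$ has exactly $k$ points and the $V_j$ are disjoint, necessarily $N = k$ and after reindexing $a_i \in V_i$, so choosing $P_i \in \mathscr N_i$ with $P_i \subset V_i$ (shrinking $V_i$ inside $W_i$ first if needed) gives $\langle P_1, \dots, P_k\rangle \subset \langle V_1, \dots, V_k\rangle \subset \widehat U$. Part (a) is the main obstacle and is where the characterization of convergent sequences must be used. Let $\langle P_1, \dots, P_k\rangle \in \mathscr P_K$ and let $\{A_n\}$ be a sequence in $\mathcal F(X)$ converging to $K$; I must show $A_n \in \langle P_1, \dots, P_k\rangle$ eventually. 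Apply Theorem~\ref{18} (in its $\mathcal F(X)$ form, or directly since $\mathcal F(X) \subset \mathcal S(X)$): condition (i$'$) forces, for all large $n$, that $A_n$ meets each $W_i$ and is contained in $\bigcup_i W_i$, and moreover that the "piece" $A_n \cap W_i$ converges to $\{a_i\}$ in $X$ (this is the $\mathcal F(X)$-version of Lemma~\ref{12}). Now here is the key point: each $P_i \in \mathscr N_i$ is a \emph{sequential neighborhood of $a_i$ in $X$}, so any sequence of points converging to $a_i$ lies eventually in $P_i$; applying this to the points of the sets $A_n \cap W_i$ (a convergent sequence of finite sets whose points converge to $a_i$), we get $A_n \cap W_i \subset P_i$ for all large $n$, and combined with $A_n \subset \bigcup_i W_i$ and $A_n \cap W_i \ne \emptyset$ this yields $A_n \in \langle P_1, \dots, P_k\rangle$ eventually. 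This establishes (a). Taking the supremum over all $K \in \mathcal F(X)$ gives $sn_\chi(\mathcal F(X)) \le \kappa = sn_\chi(X)$, completing the proof. The delicate bookkeeping is in step (a) — specifically in extracting from "$\{A_n\} \to K$ in $\mathcal F(X)$" the statement that the points of $A_n$ in the $W_i$-slot converge to $a_i$, which is exactly what Theorem~\ref{18}(i$'$) together with the disjointness of the $W_i$ provides.
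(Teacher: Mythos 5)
Your proposal is correct, and its skeleton coincides with the paper's: the easy inequality is handled by the copy of $X$ inside $\mathcal{F}(X)$, the candidate $sn$-network at $K=\{a_1,\dots,a_k\}$ is built from Vietoris basic sets $\langle P_1,\dots,P_k\rangle$ with the $P_i$ drawn from pointwise $sn$-networks and arranged to be pairwise disjoint, and the sequential-neighborhood property is checked through the characterization of convergent sequences in Theorem~\ref{18}. Where you genuinely diverge is in that last verification. The paper first gets $A_n\subset\bigcup_i P_i$ eventually from Theorem~\ref{18}(ii$'$), using that a finite union of sequential neighborhoods of the points $a_i$ is a sequential neighborhood of $K$, and then rules out $A_{n_k}\cap P_{i_0}=\emptyset$ along a subsequence by invoking Lemma~\ref{16}: the set $K\cup\bigcup_k A_{n_k}$ is compact metrizable, so the witnesses yield a nontrivial sequence outside $P_{i_0}$ converging to $a_{i_0}$, contradicting that $P_{i_0}$ is a sequential neighborhood. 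You instead fix disjoint open envelopes $W_i$, use the Lemma~\ref{12}-type piecewise convergence $A_n\cap W_i\to\{a_i\}$, and note that any selection $x_n\in A_n\cap W_i$ then automatically converges to $a_i$, so $A_n\cap W_i\subset P_i$ eventually by the sequential-neighborhood property alone; combined with $A_n\subset\bigcup_i W_i$ and $A_n\cap W_i\neq\emptyset$ this finishes the step. Your route dispenses with the compactness/metrizability input of Lemma~\ref{16} and is, if anything, slightly more elementary, while the paper's route needs no auxiliary open sets $W_i$. Two cosmetic remarks: the members of your $\mathscr{P}_K$ should formally be intersected with $\mathcal{F}(X)$, since $\langle\,\cdot\,\rangle$ is defined as a subset of $\mathcal{S}(X)$; and the containment $A_n\subset\bigcup_i W_i$ comes from (ii)/(ii$'$) (or just from the basic neighborhood $\langle W_1,\dots,W_k\rangle$), not from (i$'$), though since the sequence converges all of these are available.
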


\begin{proof}
Clearly, $sn_{\chi}(X)\leq sn_{\chi}(\mathcal{F}(X))$. It suffices to prove that $sn_{\chi}(\mathcal{F}(X))\leq sn_{\chi}(X)$. Let $sn_{\chi}(X)=\kappa$, and let $\mathcal{P}=\bigcup_{x\in X}\mathcal{P}_{x}$ be a sn-network of $X$, where each $\mathcal{P}_{x}$ is an sn-network of point $x$ and $|\mathcal{P}_{x}|\leq\kappa$. Clearly, it only need to prove $sn_{\chi}(\mathcal{F}(X), A)\leq\kappa$ for each $A\in\mathcal{F}(X)$. Take an arbitrary $A\in\mathcal{F}(X)$. Let $A=\{x_{1}, \cdots, x_{n}\}$, and put
$$\widehat{\mathcal{P}}_{A}=\{\langle P_{x_{1}}, \cdots, P_{x_{n}}\rangle\cap \mathcal{F}(X): P_{x_{i}}\in \mathcal{P}_{x_{i}}, P_{x_{i}}\cap P_{x_{j}}=\emptyset,~i\neq j,~i,j\leq n\}.$$ Then $\widehat{\mathcal{P}}_{A}$ is an sn-network of point $A$ in $\mathcal{F}(X)$ and $|\widehat{\mathcal{P}}_{A}|\leq\kappa$. In fact, it is obvious that $|\widehat{\mathcal{P}}_{A}|\leq\kappa$ and $\widehat{\mathcal{P}}_{A}$ is a network at point $A$ in $\mathcal{F}(X)$. It suffices to prove that each element of $\widehat{\mathcal{P}}_{A}$ is a sequential neighborhood of the point $A$ in $\mathcal{F}(X)$. Take any $\widehat{U}\in \widehat{\mathcal{P}}_{A}$. Then $\widehat{U}=\langle P_{x_{1}}, \cdots, P_{x_{n}}\rangle\cap \mathcal{F}(X)$, where each $P_{x_{i}}\in \mathcal{P}_{x_{i}}$ and $P_{x_{i}}\cap P_{x_{j}}=\emptyset$ if $i\neq j$. Let the sequence $\{A_{n}\}\subset\mathcal{F}(X)$ converge to $A$ in $\mathcal{F}(X)$. Since $U=\bigcup_{i=1}^{n}P_{x_{i}}$ is a sequential neighborhood of $A$ in $X$, it follows from Theorem \ref{18} that there is $N\in\mathbb{N}$ such that $A_{n}\subset U$ for any $n>N$. Without loss of generality, we may assume that $A_{n}\subset U$ for each $n\in\mathbb{N}$. Next we prove that there is $N_{1}\in\mathbb{N}$ such that $A_{n}\in \widehat{U}$ for each $n>N_{1}$. If not, there are subsequences $\{A_{n_{k}}\}$ and $i_{0}\leq n$ such that $A_{n_{k}}\not\in\widehat{U}$ and $A_{n_{k}}\cap P_{x_{i_{0}}}=\emptyset$ for each $k\in\mathbb{N}$. Obviously, $x_{i_{0}}\not\in A_{n_{k}}$, and it follows from Lemma \ref{16} that $K=A\cup \bigcup_{k\in\mathbb{N}}A_{n_{k}}$ is a countable compact metrizable space. However, from Theorem \ref{18}, we see that $x_{i_{0}}$ is the limit point in $K$, hence there is a non-trivial convergence sequence in $K\setminus A$ converges to $x_{i_{0}}$, which contradicts with $A_{n_{k}}\cap P_{x_{i_{0}}}=\emptyset$. Therefore, each element of $\widehat{\mathcal{P}}_{A}$ is a sequential neighborhood of the point $A$ in $\mathcal{F}(X)$. Then $sn_{\chi}(\mathcal{F}(X), A)\leq\kappa$.
\end{proof}

\begin{corollary}
A space $X$ is $snf$-countable if and only if $\mathcal{F}(X)$ is $snf$-countable.
\end{corollary}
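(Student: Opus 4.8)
The plan is to read this corollary off from Theorem~\ref{tt}. The first step is to record the routine equivalence underlying the statement: a space $Y$ is $snf$-countable if and only if $sn_{\chi}(Y,y)\leq\aleph_{0}$ for every $y\in Y$, equivalently $sn_{\chi}(Y)=\sup_{y\in Y}sn_{\chi}(Y,y)\leq\aleph_{0}$. The only direction needing a word is that, given for each $y\in Y$ a countable family $\mathscr{P}_{y}$ which is an $sn$-network at $y$, the union $\mathscr{P}=\bigcup_{y\in Y}\mathscr{P}_{y}$ is an $sn$-network for $Y$ all of whose fibers are countable; this is immediate, since each $\mathscr{P}_{y}$ already satisfies the local conditions in the definition of an $sn$-network. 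The converse is trivial.

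Next, I would apply this equivalence twice, to $Y=X$ and to $Y=\mathcal{F}(X)$, and combine it with the equality $sn_{\chi}(X)=sn_{\chi}(\mathcal{F}(X))$ furnished by Theorem~\ref{tt}. Concretely: if $X$ is $snf$-countable then $sn_{\chi}(X)\leq\aleph_{0}$, so $sn_{\chi}(\mathcal{F}(X))\leq\aleph_{0}$ by Theorem~\ref{tt}, and hence $\mathcal{F}(X)$ is $snf$-countable; conversely, if $\mathcal{F}(X)$ is $snf$-countable then $sn_{\chi}(\mathcal{F}(X))\leq\aleph_{0}$, so $sn_{\chi}(X)\leq\aleph_{0}$, and hence $X$ is $snf$-countable. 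For this last implication the full strength of Theorem~\ref{tt} is not even needed: the inequality $sn_{\chi}(X)\leq sn_{\chi}(\mathcal{F}(X))$ noted at the start of its proof (reflecting that $X$ embeds in $\mathcal{F}(X)$ as the closed set of singletons) already suffices.

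There is essentially no obstacle here; the corollary is a formal consequence of Theorem~\ref{tt}, and the only point requiring care is the bookkeeping between the cardinal-function formulation of $snf$-countability ($sn_{\chi}\leq\aleph_{0}$) and the covering formulation (existence of an $sn$-network with countable fibers), which is routine. If a more self-contained presentation is preferred, one can instead phrase the two implications directly in terms of networks — using the family $\widehat{\mathcal{P}}_{A}$ built in the proof of Theorem~\ref{tt} for the forward direction, and restricting an $sn$-network of $\mathcal{F}(X)$ to the closed copy of $X$ for the backward direction — but this just reproduces a special case of Theorem~\ref{tt}, so the short deduction above is the natural write-up.
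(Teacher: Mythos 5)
Your deduction is correct and matches the paper's intent: the corollary is stated there as an immediate consequence of Theorem~\ref{tt}, exactly as you derive it via the equivalence between $snf$-countability and $sn_{\chi}\leq\aleph_{0}$. No gaps; the write-up is fine.
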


\begin{proposition}
If $\mathcal{N}$ is a $sn$-network of a space $X$, then $$\mathcal{B}=\{\langle B_{1}, ..., B_{n}\rangle \cap \mathcal{F}(X):B_{i}\in \mathcal{N}, i\leq n, n\in\mathbb{N}\}$$ is a $sn$-network of $\mathcal{F}(X)$.
\end{proposition}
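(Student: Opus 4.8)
The plan is to run the proof of Theorem~\ref{tt} once globally rather than point by point. Fix a decomposition $\mathcal{N}=\bigcup_{x\in X}\mathcal{N}_{x}$ in which each $\mathcal{N}_{x}$ is an $sn$-network at $x$. For $A\in\mathcal{F}(X)$, write $A=\{x_{1},\ldots,x_{n}\}$ with the $x_{i}$ pairwise distinct, and set
$$\mathcal{B}_{A}=\{\langle B_{1},\ldots,B_{n}\rangle\cap\mathcal{F}(X):B_{i}\in\mathcal{N}_{x_{i}}\ \mbox{and}\ B_{i}\cap B_{j}=\emptyset\ \mbox{whenever}\ i\neq j\}.$$
Then $\mathcal{B}_{A}\subseteq\mathcal{B}$; the family $\mathcal{B}$ covers $\mathcal{F}(X)$, because for such an $A$ one may choose $B_{i}\in\mathcal{N}_{x_{i}}$ and get $A\in\langle B_{1},\ldots,B_{n}\rangle$; and a routine check gives $\mathcal{B}=\bigcup_{A\in\mathcal{F}(X)}\mathcal{B}_{A}$. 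Hence it suffices to prove that, for every $A\in\mathcal{F}(X)$, the family $\mathcal{B}_{A}$ is an $sn$-network at the point $A$ in $\mathcal{F}(X)$: that it is closed under finite near-intersections at $A$, that it is a network at $A$, and that each of its members is a sequential neighborhood of $A$.

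The first two of these are Vietoris bookkeeping. For the network property, let $\widehat{U}$ be an open neighborhood of $A$ in $\mathcal{F}(X)$ and apply Lemma~\ref{11} inside $\mathcal{S}(X)$; since $A$ is finite, writing $A=\bigcup_{i=1}^{n}\{x_{i}\}$ makes the integer $k$ of that lemma equal $n$ and its clause~(2) vacuous, so we obtain pairwise disjoint open sets $V_{1},\ldots,V_{n}$ in $X$ with $x_{i}\in V_{i}$ and $A\in\langle V_{1},\ldots,V_{n}\rangle\subseteq\widehat{U}$. Using the network clause for $\mathcal{N}$, pick $B_{i}\in\mathcal{N}_{x_{i}}$ with $x_{i}\in B_{i}\subseteq V_{i}$; the $B_{i}$ are pairwise disjoint, $A\in\langle B_{1},\ldots,B_{n}\rangle$, and $\langle B_{1},\ldots,B_{n}\rangle\subseteq\langle V_{1},\ldots,V_{n}\rangle$ by the box calculus of \cite[Lemma 2.3.1]{M}, so $\langle B_{1},\ldots,B_{n}\rangle\cap\mathcal{F}(X)\in\mathcal{B}_{A}$ sits inside $\widehat{U}$. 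For near-intersections, if $\langle B_{1},\ldots,B_{n}\rangle\cap\mathcal{F}(X)$ and $\langle B_{1}',\ldots,B_{n}'\rangle\cap\mathcal{F}(X)$ lie in $\mathcal{B}_{A}$ with $B_{i},B_{i}'\in\mathcal{N}_{x_{i}}$, use the near-intersection clause for $\mathcal{N}$ to choose $B_{i}''\in\mathcal{N}_{x_{i}}$ with $B_{i}''\subseteq B_{i}\cap B_{i}'$; the $B_{i}''$ stay pairwise disjoint, and $\langle B_{1}'',\ldots,B_{n}''\rangle\cap\mathcal{F}(X)\in\mathcal{B}_{A}$ is contained in both given sets.

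The substantive step is that each $\widehat{W}=\langle B_{1},\ldots,B_{n}\rangle\cap\mathcal{F}(X)\in\mathcal{B}_{A}$ is a sequential neighborhood of $A$ in $\mathcal{F}(X)$, and here I would simply transcribe the argument from the proof of Theorem~\ref{tt}. Let $\{A_{m}\}\subseteq\mathcal{F}(X)$ converge to $A$. Since $U:=\bigcup_{i=1}^{n}B_{i}$ is a sequential neighborhood of $A$ in $X$ (each $B_{i}$ being a sequential neighborhood of $x_{i}$), Theorem~\ref{18} gives $A_{m}\subseteq U$ for all large $m$, so we may assume $A_{m}\subseteq U$ for every $m$. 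If $A_{m}\notin\widehat{W}$ for infinitely many $m$, then, as $A_{m}\subseteq U=\bigcup_{i}B_{i}$ already, the pigeonhole principle produces an index $i_{0}\leq n$ and a subsequence $\{A_{m_{k}}\}$ with $A_{m_{k}}\cap B_{i_{0}}=\emptyset$, hence $x_{i_{0}}\notin A_{m_{k}}$, for all $k$. By Lemma~\ref{16} the set $K=A\cup\bigcup_{k}A_{m_{k}}$ is a countable compact metrizable subspace of $X$, and condition~(i$^{\prime}$) of Theorem~\ref{18} applied to $\{A_{m_{k}}\}$ gives $x_{i_{0}}\in\bigcap_{j}\overline{\bigcup_{k\geq j}A_{m_{k}}}$, so $x_{i_{0}}$ is an accumulation point of $\bigcup_{k}A_{m_{k}}$ in the metrizable space $K$; thus some nontrivial sequence drawn from $\bigcup_{k}A_{m_{k}}$ converges to $x_{i_{0}}$. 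Since $B_{i_{0}}$ is a sequential neighborhood of $x_{i_{0}}$ in $X$, that sequence is eventually in $B_{i_{0}}$, contradicting $A_{m_{k}}\cap B_{i_{0}}=\emptyset$. Therefore $A_{m}$ is eventually in $\widehat{W}$, as required.

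Together these give that each $\mathcal{B}_{A}$ is an $sn$-network at $A$, whence $\mathcal{B}=\bigcup_{A}\mathcal{B}_{A}$ is an $sn$-network of $\mathcal{F}(X)$. I expect the main obstacle to be the sequential-neighborhood verification in the third paragraph, but this is precisely what the proof of Theorem~\ref{tt} already does and can be quoted; the only places demanding a little care are the two structural remarks, that $\mathcal{B}$ genuinely decomposes as $\bigcup_{A}\mathcal{B}_{A}$ and that Lemma~\ref{11} reduces to exactly $|A|$ open sets when $A$ is finite.
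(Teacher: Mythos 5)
Your third paragraph — the verification that $\langle B_{1},\ldots,B_{n}\rangle\cap\mathcal{F}(X)$ with $B_{i}\in\mathcal{N}_{x_{i}}$ is a sequential neighborhood of $A=\{x_{1},\ldots,x_{n}\}$, via Theorem~\ref{18}, Lemma~\ref{16} and the pigeonhole/metrizability argument — is exactly the sequential-neighborhood step in the paper's proof of Theorem~\ref{tt}, and it is correct. The gap is in your reduction. The family $\mathcal{B}$ in the statement consists of \emph{all} finite tuples drawn from $\mathcal{N}$, with no disjointness and no requirement that the $B_{i}$ be attached to the points of a single finite set, whereas your $\mathcal{B}_{A}$ collects only pairwise disjoint tuples with $B_{i}\in\mathcal{N}_{x_{i}}$. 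The claimed identity $\mathcal{B}=\bigcup_{A}\mathcal{B}_{A}$ is false in general: with $X=\mathbb{R}$ and $\mathcal{N}$ the family of open intervals with rational endpoints, the set $\langle(0,2),(1,3)\rangle\cap\mathcal{F}(\mathbb{R})$ belongs to $\mathcal{B}$ but equals no $\langle D_{1},\ldots,D_{m}\rangle\cap\mathcal{F}(\mathbb{R})$ with the $D_{i}$ pairwise disjoint (the singleton $\{3/2\}$ lies in it, forcing $m=1$; but $\{1/2,5/2\}$ also lies in it, so $1/2\in D_{1}$ and then $\{1/2\}$ would lie in it as well, a contradiction). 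Consequently what you actually prove is that a proper subfamily of $\mathcal{B}$ is an $sn$-network of $\mathcal{F}(X)$, and that does not yield the statement: in the definition used here, \emph{every} member of $\mathcal{B}$ must be placed in some $\mathcal{B}_{A}$ at which it is a sequential neighborhood (and at which conditions (2) and (3) persist), so a family is not automatically an $sn$-network just because it contains one.

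The repair is small, and it is precisely what the paper's stated Fact is for: your sequential-neighborhood argument never uses disjointness of the $B_{i}$, nor that their witness points are distinct. Given an arbitrary member $\langle B_{1},\ldots,B_{n}\rangle\cap\mathcal{F}(X)$ of $\mathcal{B}$, choose $y_{i}$ with $B_{i}\in\mathcal{N}_{y_{i}}$ and assign it to $A=\{y_{1},\ldots,y_{n}\}\in\mathcal{F}(X)$; since $\bigcup_{i}B_{i}$ is a sequential neighborhood of $A$ (the Fact), Theorem~\ref{18} puts any sequence $A_{m}\to A$ eventually inside $\bigcup_{i}B_{i}$, and your Lemma~\ref{16} argument gives $A_{m}\cap B_{i}\neq\emptyset$ eventually for each $i$, with no disjointness needed. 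You must then also check that the enlarged families $\mathcal{B}'_{A}$ obtained this way still satisfy the directedness condition (2): if all witnesses of $B_{1},\ldots,B_{m}$ and $C_{1},\ldots,C_{l}$ lie in $A=\{x_{1},\ldots,x_{k}\}$, choose for each $i$ some $D_{i}\in\mathcal{N}_{x_{i}}$ contained in the intersection of all $B_{j}$ and $C_{j'}$ witnessed at $x_{i}$; then $\langle D_{1},\ldots,D_{k}\rangle\cap\mathcal{F}(X)$ lies in both given sets. With these adjustments your argument is complete and coincides with the paper's intended proof; your side remark that Lemma~\ref{11} reduces to exactly $|A|$ disjoint open sets for finite $A$ is fine and unaffected.
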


\begin{proof}
Since $\mathcal{N}$ is a $sn$-network of a space $X$, let $\mathcal{N}=\bigcup_{x\in X}\mathcal{N}_{x}$, where $\mathcal{N}_{x}$ is an $sn$-network at point $x$ in $X$. Then it easily see that $\mathcal{B}$ is a $sn$-network of $\mathcal{F}(X)$ from the proof of Theorem~\ref{tt}, (ii$^{\prime}$) of (3) in Theorem~\ref{18} and the following fact:

\smallskip
{\bf Fact:} For any $A=\{x_{1}, \cdots, x_{n}\}\in\mathcal{F}(X)$ and any $P_{i}\in\mathcal{N}_{x_{i}}$ for each $i\leq n$, the set $\bigcup\{P_{i}: i\leq n\}$ is a sequential neighborhood of $A$ in $X$.
\end{proof}

\begin{theorem}
For any space $X$, we have $sn\omega(X)=sn\omega(\mathcal{F}(X))$.
\end{theorem}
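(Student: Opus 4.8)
The inequality $sn\omega(\mathcal{F}(X)) \ge sn\omega(X)$ is immediate, since $X$ embeds as a closed subspace of $\mathcal{F}(X)$ (via $x \mapsto \{x\}$) and an $sn$-network of $\mathcal{F}(X)$ restricts to one of the subspace. So the content is the reverse inequality $sn\omega(\mathcal{F}(X)) \le sn\omega(X)$. Let $\kappa = sn\omega(X)$ and fix an $sn$-network $\mathcal{N} = \bigcup_{x\in X}\mathcal{N}_x$ of $X$ with $|\mathcal{N}| \le \kappa$, where each $\mathcal{N}_x$ is an $sn$-network at $x$. The natural candidate is exactly the family produced by the preceding Proposition, namely
$$\mathcal{B} = \{\langle B_1, \dots, B_n\rangle \cap \mathcal{F}(X) : B_i \in \mathcal{N},\ i \le n,\ n \in \mathbb{N}\}.$$
The Proposition already tells us $\mathcal{B}$ is an $sn$-network of $\mathcal{F}(X)$, so it only remains to check the cardinality bound $|\mathcal{B}| \le \kappa$. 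Since $\mathcal{B}$ is indexed by the finite subsets of $\mathcal{N}$, we have $|\mathcal{B}| \le |\mathcal{N}^{<\omega}| = |\mathcal{N}| + \aleph_0 \le \kappa$ (recall $\kappa \ge \aleph_0$ by definition of $sn\omega$). Hence $sn\omega(\mathcal{F}(X)) \le |\mathcal{B}| + \aleph_0 \le \kappa$, which completes the argument.

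**On the structure and the potential obstacle.** Because the hard combinatorial work — verifying that each $\langle B_1,\dots,B_n\rangle \cap \mathcal{F}(X)$ behaves like a sequential neighborhood and that the family is a genuine network at each point of $\mathcal{F}(X)$ — has been carried out in the proof of Theorem~\ref{tt} and the preceding Proposition, this theorem is essentially a bookkeeping corollary. The one point requiring a moment's care is that the Proposition's family $\mathcal{B}$ is built from the \emph{whole} network $\mathcal{N}$, not from a choice of one network per point; this is precisely what makes the cardinality count clean, since $|\mathcal{N}^{<\omega}| \le \kappa$ whenever $|\mathcal{N}| \le \kappa$. If one instead tried to mimic the pointwise bound $sn_\chi(\mathcal{F}(X),A) \le \kappa$ from Theorem~\ref{tt} and then take a supremum, one would only recover $sn_\chi$, not $sn\omega$, because summing countably many $sn$-networks of size $\le \kappa$ over the (possibly uncountably many) points of $\mathcal{F}(X)$ need not stay bounded by $\kappa$. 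So the real (small) subtlety is to use the \emph{global} construction rather than the pointwise one; once that is recognized there is no genuine obstacle.
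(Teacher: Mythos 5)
Your proof is correct and follows the paper's intended route: the paper leaves this theorem without an explicit proof precisely because it is meant to be the cardinality bookkeeping corollary of the preceding Proposition (the family $\{\langle B_1,\dots,B_n\rangle\cap\mathcal{F}(X): B_i\in\mathcal{N}\}$ built from a whole $sn$-network $\mathcal{N}$ of $X$), combined with the easy lower bound from the copy of $X$ inside $\mathcal{F}(X)$. Your remark that one must use the global family rather than the pointwise construction of Theorem~\ref{tt} is exactly the right observation.
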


\begin{theorem}\label{tt1}\label{tt7}
For any space $X$, we have $so_{\chi}(X)=so_{\chi}(\mathcal{S}(X))$.
\end{theorem}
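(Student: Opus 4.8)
The plan is to mirror the strategy used for $sn_\chi$ in Theorem~\ref{tt}, but working with $so$-networks and the whole hyperspace $\mathcal{S}(X)$ rather than just $\mathcal{F}(X)$. As always one inequality $so_\chi(X)\leq so_\chi(\mathcal{S}(X))$ is trivial, since singletons $\{x\}$ form a closed subspace of $\mathcal{S}(X)$ homeomorphic to $X$ (and an $so$-network at $\{x\}$ in $\mathcal{S}(X)$ restricts to one at $x$ in $X$). So the content is the reverse inequality. Let $\kappa=so_\chi(X)$ and fix an $so$-network $\mathscr P=\bigcup_{x\in X}\mathscr P_x$ with $|\mathscr P_x|\leq\kappa$ and each $P\in\mathscr P_x$ a sequentially open neighborhood of $x$. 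Fix $A=\bigcup_{n=1}^k S_n\in\mathcal{S}(X)$ and write $A=\{a_m:m\in\mathbb N\}$ (a countable set, by definition of $\mathcal{S}(X)$). The candidate $so$-network at $A$ is
$$\widehat{\mathscr P}_A=\{\langle P_{1},\dots,P_{r}\rangle : r\in\mathbb N,\ P_i\in\mathscr P_{b_i}\ \text{for some}\ b_i\in A,\ A\subset\textstyle\bigcup_i P_i,\ A\cap P_i\neq\emptyset\},$$
which has size $\leq\kappa$ because $A$ is countable and each $\mathscr P_{b}$ has size $\leq\kappa$.

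Next I would verify two things: that $\widehat{\mathscr P}_A$ is a network at $A$ in $\mathcal{S}(X)$, and that every member is a sequential neighborhood of $A$ in $\mathcal{S}(X)$. For the network property: given an open neighborhood $\widehat U$ of $A$, apply Lemma~\ref{11} to get disjoint open $V_1,\dots,V_N$ with $A\in\langle V_1,\dots,V_N\rangle\subset\widehat U$; for each $i\leq N$ and each $x\in V_i\cap A$ choose $P_{x,i}\in\mathscr P_x$ with $P_{x,i}\subset V_i$ (legitimate since $\mathscr P_x$ is an $so$-network, in particular a network, at $x$); the sets $\{P_{x,i}\}$ cover the compact set $A$ (compact by Lemma~\ref{16} applied to the constant sequence, or directly), extract a finite subcover, and by \cite[Lemma 2.3.1]{M} the corresponding $\langle\,\cdot\,\rangle$ lies in $\langle V_1,\dots,V_N\rangle\subset\widehat U$ and contains $A$. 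This is exactly the argument from Theorem~\ref{mingti 7}.

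The heart of the proof is showing each $\widehat U=\langle P_1,\dots,P_r\rangle\in\widehat{\mathscr P}_A$ is a sequential neighborhood of $A$. Let $\{A_n\}\subset\mathcal{S}(X)$ converge to $A$ in $\mathcal{S}(X)$. The key observation is that $U:=\bigcup_{i=1}^r P_i$ is a sequential neighborhood of $A$ in $X$: indeed each $P_i$ is sequentially open in $X$ and $A\subset U$, so any sequence in $X$ converging to a point $a\in A$ eventually enters the $P_i$ containing $a$, hence is eventually in $U$ (this is the "Fact" in the $\mathcal{F}(X)$ case). So by (ii$^{\prime}$) of Theorem~\ref{18} there is $N_0$ with $A_n\subset U$ for $n>N_0$; WLOG $A_n\subset U$ for all $n$. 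If $\{A_n\}$ is not eventually in $\widehat U$, then since $A_n\subset U=\bigcup P_i$, the only way to fail $A_n\in\langle P_1,\dots,P_r\rangle$ is $A_n\cap P_{i_0}=\emptyset$ for some $i_0$; by passing to a subsequence we may fix $i_0$ and assume $A_{n_k}\cap P_{i_0}=\emptyset$ for all $k$. Pick $b\in A\cap P_{i_0}$ (nonempty by construction of $\widehat{\mathscr P}_A$). By Lemma~\ref{16}, $K=A\cup\bigcup_k A_{n_k}$ is a countable compact metrizable subspace of $X$. Using (i$^{\prime}$) of Theorem~\ref{18} for the subsequence $\{A_{n_k}\}$, $b\in A$ forces $b\in\bigcap_n\overline{\bigcup_{k\geq n}A_{n_k}}$, so $b$ is a limit point of $\bigcup_k A_{n_k}$ in $K$; as $K$ is metrizable there is a nontrivial sequence in $\bigcup_k A_{n_k}$ converging to $b$, hence (since $P_{i_0}$ is a sequential neighborhood of $b$) eventually inside $P_{i_0}$, contradicting $A_{n_k}\cap P_{i_0}=\emptyset$. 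Therefore $\{A_n\}$ is eventually in $\widehat U$, so $\widehat U$ is a sequential neighborhood of $A$.

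The main obstacle, and the one point where working in $\mathcal{S}(X)$ rather than $\mathcal{F}(X)$ needs care, is the last step: extracting an honest nontrivial convergent sequence to $b$ inside $\bigcup_k A_{n_k}$. The metrizability of $K$ from Lemma~\ref{16} is what makes this routine — without it, a limit point need not be the limit of a sequence. One should also double-check the bookkeeping that $|\widehat{\mathscr P}_A|\leq\kappa$ genuinely uses only countability of $A$ together with $|\mathscr P_b|\leq\kappa$, and that $\widehat{\mathscr P}_A$ is closed under the "finite intersection refinement" condition (2) in the definition of $so$-network; the latter follows because $\langle P_1,\dots,P_r\rangle\cap\langle Q_1,\dots,Q_s\rangle$ can be refined by a member of $\widehat{\mathscr P}_A$ built from the common refinements $\mathscr P_b$-elements inside $P_i\cap Q_j$, again using property (2) for each $\mathscr P_b$. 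Taking the supremum over $A\in\mathcal{S}(X)$ gives $so_\chi(\mathcal{S}(X))\leq\kappa=so_\chi(X)$, completing the proof.
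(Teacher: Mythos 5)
Your overall strategy is the paper's: build Vietoris-type sets from the $so$-network of $X$, use Lemma~\ref{11} for the network property and Lemma~\ref{16} plus Theorem~\ref{18} for the sequential argument. But there is a genuine gap in what you verify. The $so$-character at the point $A$ requires a family of \emph{sequentially open} subsets of $\mathcal{S}(X)$ containing $A$, i.e.\ each member $\widehat U$ must be a sequential neighborhood of \emph{every} point $B\in\widehat U$, not just of $A$. You only check that each $\widehat U\in\widehat{\mathscr P}_A$ is a sequential neighborhood of the single point $A$; that makes $\widehat{\mathscr P}_A$ an $sn$-network at $A$, which is strictly weaker and does not yield $so_{\chi}(\mathcal{S}(X),A)\leq\kappa$. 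The distinction is not pedantic in this paper: the $sn$-analogue of the theorem is false (the $S_2$ example shows $sn$-character is not preserved by $\mathcal{S}(\cdot)$), so one cannot pass from ``$sn$-network at $A$ of size $\kappa$'' to the stated conclusion. The repair is exactly what the paper does: take an arbitrary $B\in\widehat U$ and a sequence $B_n\to B$, and run your argument with $B$ in place of $A$; this works verbatim because membership of $B$ in $\widehat U=\langle P_1,\dots,P_r\rangle$ already gives $B\subset\bigcup_iP_i$ and $B\cap P_i\neq\emptyset$, and each $P_i$ is sequentially open, hence a sequential neighborhood of whichever point of $B$ it contains.

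A secondary flaw: in your network verification you choose $P_{x,i}\in\mathscr P_x$ with $P_{x,i}\subset V_i$ for every $x\in A$ and then ``extract a finite subcover'' by compactness of $A$. The sets $P_{x,i}$ need not be open, so compactness alone does not give a finite subcover; this is where the argument differs from Theorem~\ref{mingti 7}, where genuine open neighborhoods are used. It can be fixed either by noting that $A$ is compact metrizable and sequentially open sets trace to open subsets of $A$, or, as the paper does, by structuring the family: pick elements of $\mathscr P_{s_i}$ at the limit points $s_1,\dots,s_k$ (these are sequential neighborhoods, so they absorb all but finitely many points of $A$) and then finitely many further elements at the leftover points. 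The same caveat applies to your sketch that $\widehat{\mathscr P}_A$ satisfies the refinement condition (2): refining $P_i\cap Q_j$ at each point of the infinite set $A$ does not by itself produce a \emph{finite} tuple, so again one needs the limit-point/tail structure.
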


\begin{proof}
Clearly, $so_{\chi}(X)\leq so_{\chi}(\mathcal{S}(X))$. It suffices to prove that $so_{\chi}(\mathcal{S}(X))\leq so_{\chi}(X)$. Let $so_{\chi}(X)=\kappa$, and $\mathcal{P}=\bigcup_{x\in X}\mathcal{P}_{x}$ be an so-network of $X$, where each $\mathcal{P}_{x}$ is an so-network of point $x$ such that $|\mathcal{P}_{x}|\leq\kappa$. Clearly, it needs only to prove that $so_{\chi}(\mathcal{S}(X), A)\leq\kappa$ for each $A\in \mathcal{S}(X)$. Take an arbitrary $A\in \mathcal{S}(X)$. Set $A=\bigcup _{n=1}^{k} S_{n}$, and define $\widehat{\mathcal{P}}_{A}$ as the family of sets $\widehat{P}=\langle P_{s_{1}}, \cdots, P_{s_{k}}, P_{x_{k+1}}, \cdots, P_{x_{n}}\rangle$ satisfies the following conditions:

\begin{enumerate}
\item $P_{s_{i}}\in \mathcal{P}_{s_{i}}, ~P_{x_{i}}\in \mathcal{P}_{x_{i}}, ~x_{i}\in A\setminus\bigcup_{i=1}^{k}P_{s_{i}}$;

\smallskip
\item $s_{1}, \cdots, s_{k}$ are the limit points of $A$ and $|P_{x_{i}}\cap (A\setminus\bigcup_{j=1}^{k}P_{s_{j}})|=1$ for any $k<i\leq n$;

\smallskip
\item Any two elements of $\{P_{s_{1}}, \cdots, P_{s_{k}}, P_{x_{k+1}}, \cdots, P_{x_{n}}\}$ are disjoint;

\smallskip
\item If $A$ is a finite set, then $\widehat{P}=\langle P_{x_{k+1}}, \cdots, P_{x_{n}}\rangle$, where $A=\{x_{k+1}, \cdots, x_{n}\}$.
\end{enumerate}

We claim that $\widehat{\mathcal{P}}_{A}$ is an so-network of point $A$ in $\mathcal{S}(X)$ and $|\widehat{\mathcal{P}}_{A}|\leq\kappa$. Indeed, it is obvious that $|\widehat{\mathcal{P}}_{A}|\leq\kappa$. Then we first prove that $\widehat{\mathcal{P}}_{A}$ is a network of the point $A$ in $\mathcal{S}(X)$. Let $\widehat{U}$ be any open neighborhood of $A$ in $\mathcal{S}(X)$. Then, from Lemma~\ref{11}, there are disjoint open sets $V_{1}, \cdots, V_{N}$ in $X$ for some positive integers $N\geq k$ such that $V_{1}, \cdots, V_{N}$ satisfy the conditions (1)-(3) of Lemma~\ref{11}. For any $1\leq i\leq k$, there is $P_{s_{i}}\in \mathcal{P}_{s_{i}}$ such that $s_{i}\in P_{s_{i}}\subset V_{i}$. Let $A\setminus\bigcup_{j=1}^{k}P_{s_{j}}=\{x_{k+1}, \cdots, x_{n}\}$. Hence for any $k+1\leq i\leq n,$ there exists $V_{x_{i}}\in\{V_{1}, \cdots,V_{N}\}$ such that $x_{i}\in V_{x_{i}}$, thus $x_{i}\in P_{x_{i}}\subset V_{x_{i}}$ for some $P_{x_{i}}\in \mathcal{P}_{x_{i}}$. Then $$A\in \langle P_{s_{1}}, \cdots, P_{s_{k}}, P_{x_{k+1}}, \cdots, P_{x_{n}}\rangle\subset \langle V_{1}, \cdots, V_{N}\rangle\subset \widehat{U}.$$ Now it only needs to prove that each element of $\widehat{\mathcal{P}}_{A}$ is a sequentially open set in $\mathcal{S}(X)$. Take an arbitrary $\widehat{P}\in \widehat{\mathcal{P}}_{A}$, and let $\widehat{P}=\langle P_{s_{1}}, \cdots, P_{s_{k}}, P_{x_{k+1}}, \cdots, P_{x_{n}}\rangle$. Let $B\in \widehat{P}$, and let the sequence $\{B_{n}\}$ converge to $B$ in $\mathcal{S}(X)$. Since $\mathcal{P}=\bigcup_{x\in X}\mathcal{P}_{x}$ is an so-network of $X$, it easily see that $U=(\bigcup_{i=1}^{k}P_{s_{i}})\cup(\bigcup_{i=k+1}^{n}P_{x_{i}}) $ is a sequential neighborhood of the set $B$ in $X$. From Theorem~\ref{18}, we can see that there is $M\in\mathbb{N}$ such that $B_{n}\subset U$ for any $n>M$. Without loss of generality, we may assume that each $B_{n}$ is contained in $U$. We conclude that there is $N_{1}\in\mathbb{N}$ such that $B_{n}\in \widehat{P}$ for any $n>N_{1}$. If not, there are subsequences $\{B_{n_{k}}\}$ of $\{B_{n}\}$ and $x\in \{s_{1}, \cdots,s_{k}, x_{k+1}, \cdots, x_{n}\}$ such that $B_{n_{k}}\not\in\widehat{P}$ and $B_{n_{k}}\cap P_{{x}}=\emptyset$ for any $k\in\mathbb{N}$. Therefore, $P_{{x}}\cap\bigcup_{k\in\mathbb{N}}B_{n_{k}}=\emptyset$. Since $B\in \widehat{P}$, we have $B\cap P_{{x}}\neq\emptyset$. Take any fixed point $b\in B\cap P_{{x}}$, then $P_{x}$ is a sequential neighborhood of the point $b$ in $X$. However, from Lemma~\ref{16} and Theorem~\ref{18} it follows that $b$ is the limit point of a sequence in $\bigcup_{k\in\mathbb{N}}B_{n_{k}}$, which is a contradiction with  $P_{{x}}\cap(\bigcup_{k\in\mathbb{N}}B_{n_{k}})=\emptyset$. Therefore, $so_{\chi}(\mathcal{S}(X), A)\leq\kappa$.
\end{proof}

\begin{corollary}
A space $X$ is $sof$-countable if and only if $\mathcal{S}(X)$ is $sof$-countable.
\end{corollary}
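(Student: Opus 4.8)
The plan is to read the corollary off directly from Theorem~\ref{tt7}. First I would note that, for any space $Y$, being \emph{$sof$-countable} is equivalent to $so_{\chi}(Y)\leq\aleph_{0}$. Indeed, if $\mathscr{P}=\bigcup_{y\in Y}\mathscr{P}_{y}$ is an $so$-network for $Y$ with every $\mathscr{P}_{y}$ countable, then each $\mathscr{P}_{y}$ witnesses $so_{\chi}(Y,y)\leq\aleph_{0}$, so $so_{\chi}(Y)=\sup_{y\in Y}so_{\chi}(Y,y)\leq\aleph_{0}$; conversely, choosing for each $y\in Y$ a countable $so$-network $\mathscr{Q}_{y}$ at $y$ (which exists because $so_{\chi}(Y,y)\leq\aleph_{0}$) and setting $\mathscr{Q}=\bigcup_{y\in Y}\mathscr{Q}_{y}$ yields an $so$-network for $Y$ that is pointwise countable. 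This equivalence is routine and uses only the definitions from Section~2.

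With this observation in hand, the proof is a short chain of equivalences: $X$ is $sof$-countable $\iff$ $so_{\chi}(X)\leq\aleph_{0}$ $\iff$ $so_{\chi}(\mathcal{S}(X))\leq\aleph_{0}$ $\iff$ $\mathcal{S}(X)$ is $sof$-countable, where the first and third equivalences are the remark just made and the middle one is precisely Theorem~\ref{tt7}.

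I do not expect any genuine obstacle, since all the substantive content — producing, for each $A\in\mathcal{S}(X)$, a countable $so$-network $\widehat{\mathcal{P}}_{A}$ at $A$ out of a ground $so$-network of $X$ by means of Lemma~\ref{11}, Lemma~\ref{16} and Theorem~\ref{18} — has already been carried out in the proof of Theorem~\ref{tt7}. The only point deserving a sentence of care is the translation between the "$so_{\chi}(\cdot)\leq\aleph_{0}$" formulation and the "admits a pointwise-countable $so$-network" formulation of $sof$-countability, and that translation is immediate from the definitions.
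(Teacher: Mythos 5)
Your proposal is correct and matches the paper's intent: the corollary is stated as an immediate consequence of Theorem~\ref{tt7}, exactly the chain of equivalences you give, with the translation between $sof$-countability and $so_{\chi}(\cdot)\leq\aleph_{0}$ being the routine pointwise observation you describe.
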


From the proof of Theorem~\ref{tt1}, we have the following Theorem.

\begin{theorem}\label{tt6}
For any space $X$, we have $so\omega(X)=so\omega(\mathcal{S}(X))$.
\end{theorem}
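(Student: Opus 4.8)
The plan is to prove $so\omega(X)\le so\omega(\mathcal S(X))$ and $so\omega(\mathcal S(X))\le so\omega(X)$ separately. The first inequality is the ``$\omega$-weight'' form of the trivial inequality in Theorem~\ref{tt1}: $X$ is homeomorphic, via $x\mapsto\{x\}$, to the subspace $Z=\{\{x\}:x\in X\}$ of $\mathcal S(X)$, and $so\omega$ does not increase under passage to a subspace. Indeed, if $\mathcal Q=\bigcup_{y}\mathcal Q_{y}$ is an so-network of a space $Y$ and $W\subset Y$, then $\bigcup_{w\in W}\{Q\cap W:Q\in\mathcal Q_{w}\}$ is an so-network of $W$ of cardinality $\le|\mathcal Q|$: a sequence lying in $W$ and converging to $w\in W$ converges to $w$ in $Y$ as well, so each $Q\cap W$ with $w\in Q\in\mathcal Q_{w}$ remains a sequential neighbourhood of each of its points in $W$, while the network and filter-base clauses of the so-network definition are inherited by intersecting the witnessing open sets of $Y$ with $W$. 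Taking $Y=\mathcal S(X)$ and $W=Z$ yields $so\omega(X)=so\omega(Z)\le so\omega(\mathcal S(X))$.

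For the reverse inequality, let $\mathcal P=\bigcup_{x\in X}\mathcal P_{x}$ be an so-network of $X$ with $|\mathcal P|=so\omega(X)=:\lambda\ge\aleph_{0}$ and each $\mathcal P_{x}$ an so-network at $x$. For every $A=\bigcup_{n=1}^{k}S_{n}\in\mathcal S(X)$, form exactly the family $\widehat{\mathcal P}_{A}$ constructed in the proof of Theorem~\ref{tt1} out of these $\mathcal P_{x}$'s; that proof already shows $\widehat{\mathcal P}_{A}$ is a network at $A$ in $\mathcal S(X)$ and that every member of it is sequentially open, and the remaining filter-base clause is checked by shrinking coordinatewise — given two members $\langle P_{s_{1}},\dots\rangle$ and $\langle Q_{s_{1}},\dots\rangle$ of $\widehat{\mathcal P}_{A}$, pick $R_{s_{i}}\in\mathcal P_{s_{i}}$ with $R_{s_{i}}\subset P_{s_{i}}\cap Q_{s_{i}}$ (so $A\setminus\bigcup_{i}R_{s_{i}}$ is still finite), and then choose suitably small, pairwise disjoint members of $\mathcal P$ around the remaining finitely many points of $A$. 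Set $\mathcal B=\bigcup_{A\in\mathcal S(X)}\widehat{\mathcal P}_{A}$, decomposed by this union; then $\mathcal B$ is an so-network of $\mathcal S(X)$. Finally, every element of $\mathcal B$ has the form $\langle P_{1},\dots,P_{m}\rangle$ with $m\in\mathbb{N}$ and $P_{1},\dots,P_{m}\in\mathcal P$, so $\mathcal B$ embeds into $\mathcal P^{<\omega}$, and $|\mathcal P^{<\omega}|=|\mathcal P|=\lambda$ since $\lambda$ is infinite; hence $so\omega(\mathcal S(X))\le|\mathcal B|\le\lambda=so\omega(X)$.

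The only ingredient not already contained in the proof of Theorem~\ref{tt1} is this last cardinality count: amalgamating the pointwise families $\widehat{\mathcal P}_{A}$ over all $A\in\mathcal S(X)$ produces, a priori, a family that could be far larger than $\lambda$, and the point is that each of its members is coded by a finite string of elements of $\mathcal P$, so $|\mathcal B|\le|\mathcal P|^{<\omega}=|\mathcal P|$. This is precisely why the $\omega$-weight equality is not a purely formal consequence of Theorem~\ref{tt1}; the remaining verifications (network, sequential openness, and the filter-base clause for $\mathcal B$) are either quoted from Theorem~\ref{tt1} or routine as indicated above, so I do not expect any serious obstacle there.
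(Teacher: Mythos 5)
Your proposal follows essentially the paper's own route: Theorem~\ref{tt6} is obtained there exactly as you describe, by running the construction of $\widehat{\mathcal{P}}_{A}$ from the proof of Theorem~\ref{tt1} with a single so-network $\mathcal{P}$ of $X$ of size $so\omega(X)$, amalgamating over all $A\in\mathcal{S}(X)$, and noting that every member is coded by a finite tuple of elements of $\mathcal{P}$, so the amalgam has cardinality at most $|\mathcal{P}^{<\omega}|=|\mathcal{P}|$; the reverse inequality via the embedding $x\mapsto\{x\}$ and the restriction of an so-network to a subspace is the paper's ``clearly'' direction, and your verification of it is correct. You are also right that the counting step is the only genuinely new ingredient beyond Theorem~\ref{tt1}.

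One detail in your sketch of the filter-base clause is not quite right, although it is harmless. You propose to shrink coordinatewise, choosing for a remaining point $z$ of $A$ a member of $\mathcal{P}_{z}$ inside sets such as $P_{s_{i}}$ that contain $z$; but $P_{s_{i}}$ is only sequentially open, not open, and clause (3) of the definition only allows you to shrink members of $\mathcal{P}_{z}$ into \emph{open} neighbourhoods of $z$, so a member of $\mathcal{P}_{z}$ contained in $P_{s_{i}}$ need not exist (the paper does not verify this clause either, so you are not behind it here). The clause can, however, be secured at no cost: sequential neighbourhoods, and hence sequentially open subsets of $\mathcal{S}(X)$, are closed under finite intersections, so one may simply close the amalgamated family (pointwise at each $A$) under finite intersections; this preserves sequential openness and the network property at $A$, makes clause (2) trivial, and keeps the cardinality bounded by $|\mathcal{P}|$, so your conclusion $so\omega(\mathcal{S}(X))\le so\omega(X)$ stands.
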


The above Theorems~\ref{tt7} and ~\ref{tt6} give affirmative answers to (2) and (6) in Question~\ref{q1} respectively.

\begin{proposition}\label{pp}
If $\mathcal{N}$ is a $cs$-network (resp. $cs^{\ast}$-network) of a space $X$, then $$\mathcal{B}=\{\langle B_{1}, ..., B_{n}\rangle \cap \mathcal{F}(X):B_{i}\in \mathcal{N}, i\leq n, n\in\mathbb{N}\}$$ is a $cs$-network (resp. $cs^{\ast}$-network) of $\mathcal{F}(X)$.
\end{proposition}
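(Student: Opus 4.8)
The plan is to mirror the structure of the proof of Theorem~\ref{tt} (for $sn$-networks on $\mathcal{F}(X)$) and of the ``$cs$-network'' theorem earlier in Section~3, adapting them to handle both the $cs$ and $cs^{\ast}$ cases simultaneously. First I would write $\mathcal{N}=\bigcup_{x\in X}\mathcal{N}_{x}$, where each $\mathcal{N}_{x}$ is a $cs$-network (resp.\ $cs^{\ast}$-network) at $x$, and observe that $\mathcal{B}$ is a cover of $\mathcal{F}(X)$: given $A=\{x_{1},\dots,x_{n}\}\in\mathcal{F}(X)$ with the $x_i$ distinct, pick by Hausdorffness disjoint open $W_i\ni x_i$ and then $B_i\in\mathcal{N}_{x_i}$ with $x_i\in B_i\subset W_i$, so $A\in\langle B_1,\dots,B_n\rangle\cap\mathcal{F}(X)\in\mathcal{B}$.

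Next, for the network-refinement part, let $\{A_m\}_{m\in\mathbb{N}}\subset\mathcal{F}(X)$ converge to $A=\{x_1,\dots,x_n\}$ in $\mathcal{F}(X)$ and let $\widehat{U}$ be a neighborhood of $A$. By Lemma~\ref{11} choose disjoint open sets $V_1,\dots,V_N$ in $X$ with $A\in\langle V_1,\dots,V_N\rangle\subset\widehat{U}$, and discard the unneeded $V_j$ so that (re-indexing) $A\in\langle V_1,\dots,V_n\rangle$ with $x_i\in V_i$. By Lemma~\ref{12}, each sequence $\{A_m\cap V_i\}_{m}$ converges to $A\cap V_i=\{x_i\}$ in $X$; these are eventually singletons, so from some point on $A_m\cap V_i=\{y_{m,i}\}$ and $y_{m,i}\to x_i$. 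Applying the $cs$-network property of $\mathcal{N}_{x_i}$ to the sequence $\{y_{m,i}\}\to x_i\in V_i$, I get $B_i\in\mathcal{N}_{x_i}$ and $N_i\in\mathbb{N}$ with $y_{m,i}\in B_i\subset V_i$ for all $m>N_i$; in the $cs^{\ast}$ case I instead get a subsequence of $\{y_{m,i}\}$ inside some $B_i\subset V_i$ together with $x_i$. Then $\langle B_1,\dots,B_n\rangle\cap\mathcal{F}(X)\in\mathcal{B}$, it contains $A$, it is contained in $\langle V_1,\dots,V_n\rangle\subset\widehat{U}$, and for $m$ past $\max_i N_i$ (resp.\ along the diagonal subsequence of indices $m$ that works for every $i$ — only finitely many $i$, so this is still an infinite subsequence) we have $A_m\subset\bigcup_i B_i$ and $A_m\cap B_i\ne\emptyset$, i.e.\ $A_m\in\langle B_1,\dots,B_n\rangle$. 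This shows $\{A_m\}$ is eventually (resp.\ frequently, along a subsequence, together with $A$) in an element of $\mathcal{B}$ refining $\widehat{U}$, which is exactly the $cs$-network (resp.\ $cs^{\ast}$-network) condition at $A$.

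Finally I would note that $\mathcal{B}$ decomposes as $\bigcup_{A\in\mathcal{F}(X)}\mathcal{B}_A$ with $\mathcal{B}_A$ the subfamily built from points of $A$ as above, so $\mathcal{B}$ is genuinely a $cs$-network (resp.\ $cs^{\ast}$-network) \emph{for} $\mathcal{F}(X)$, not merely a network with the convergence property at single points.

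The main obstacle is the bookkeeping in the $cs^{\ast}$ case: for each coordinate $i$ one only extracts a subsequence of indices along which $\{y_{m,i}\}$ lands in $B_i$, and one must intersect these $n$ index-subsequences to get a single subsequence of $\{A_m\}$ that simultaneously witnesses membership in $\langle B_1,\dots,B_n\rangle$ for all $i$ at once; since $n$ is finite this intersection is still infinite, so it goes through, but it is the step that requires care. A secondary technical point is the passage ``eventually $A_m\cap V_i$ is a singleton'': this uses that convergence in $\mathcal{F}(X)$ forces, via Lemma~\ref{16} and Theorem~\ref{18}, that no accumulation of the $A_m$ occurs inside $V_i$ other than at $x_i$, so that one is genuinely reduced to a convergent sequence of points in $X$ rather than a convergent sequence of finite sets.
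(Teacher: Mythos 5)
Your overall route (Lemma~\ref{11} plus Lemma~\ref{12}, then a coordinatewise application of the $cs$/$cs^{\ast}$-network property of $X$ at each $x_i$) is the same as the paper's, but there are two genuine errors in the execution. The first is the claim that ``these are eventually singletons, so from some point on $A_m\cap V_i=\{y_{m,i}\}$''. This is false: take $X=\mathbb{R}$, $A=\{0\}$, $A_m=\{1/m,\,2/m\}$; then $A_m\to A$ in $\mathcal{F}(X)$ but $A_m\cap V_1$ is a two-point set for every large $m$. Knowing (via Lemma~\ref{16} and Theorem~\ref{18}) that the $A_m$ accumulate inside $V_i$ only at $x_i$ does not make the traces singletons, so your reduction to a convergent sequence of points breaks down at exactly the step that carries the proof. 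The correct repair is the paper's: set $K=A\cup\bigcup_m A_m$, note that $K$ is compact metrizable with all its limit points in $A$, and that $x_j\notin\overline{V_i}$ for $j\neq i$ (since $V_j$ is an open neighborhood of $x_j$ disjoint from $V_i$); hence $K\cap V_i$ is $\{x_i\}$ together with a (possibly finite) sequence of points converging to $x_i$. Apply the $cs$-network at $x_i$ to an enumeration of this sequence to get $B_i$ with $x_i\in B_i\subset V_i$ containing a tail; the finitely many omitted points of $K\cap V_i$ lie outside $A$, so each of them belongs to only finitely many $A_m$ (as $X\setminus\{p\}$ is a neighborhood of $A$), whence $A_m\cap V_i\subset B_i$ for all large $m$. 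This is what the paper compresses into ``$\{x_i\}\cup\bigcup\{A_n\cap V_i:n\geq N_i\}\subset B_i\subset V_i$''.

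The second error is in your $cs^{\ast}$ bookkeeping: you propose to take, for each coordinate $i$, an infinite set of indices and then ``intersect these $n$ index-subsequences'', asserting the intersection is infinite because $n$ is finite. That is not true (the even and odd indices already have empty intersection). What works, and what the paper's ``by induction on $m$'' means, is nested extraction: apply the $cs^{\ast}$-property at $x_1$ to get a subsequence, then apply it at $x_2$ to that subsequence (which still converges), and so on through the finitely many coordinates, ending with one subsequence $\{n_k\}$ witnessing $A_{n_k}\in\langle B_1,\dots,B_m\rangle\cap\mathcal{F}(X)$ for all $k$. With these two repairs your argument coincides with the paper's proof.
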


\begin{proof}
Suppose a sequence $\{A_{n}\}_{n\in \mathbb{N}}\subset \mathcal{F}(X)$ converges to a point $A=\{x_{1},\cdots,x_{m}\}$ in $\mathcal{F}(X)$. Put $K=A\cup\bigcup\{A_{n}: n\in\mathbb{N}\}$. From Lemma~\ref{16} and (ii) of (2) in Lemma~\ref{18}, $K$ is the union of finitely many convergent sequences in $X$ and the set of the limit points of $K$ is contained in $A$. Let $\widehat{\mathcal{U}}$ be a neighborhood of $A$ in $\mathcal{F}(X)$. Then there exist disjoint open sets $V_{1}, \cdots, V_{m}$ such that $A\in\langle V_{1}, \cdots, V_{m}\rangle\cap\mathcal{F}(X)\subset\widehat{\mathcal{U}}$ and $x_{i}\in V_{i}$ for each $i\leq m$. By Lemma~\ref{12}, $\{A_{n}\cap V_{i}\}_{n\in \mathbb{N}}$ converges to $A\cap V_{i}=\{x_{i}\}$ in $X$ for each $i\leq m$.

\smallskip
(1) If $\mathcal{N}$ is a $cs$-network of $X$, then there exist $B_{i}\in\mathcal{N}$ and $N_{i}\in\mathbb{N}$ such that $$\{x_{i}\}\cup\bigcup\{A_{n}\cap V_{i}: n\geq N_{i}\}\subset B_{i}\subset V_{i}.$$ Put $N=\max\{N_{i}: i\leq m\}$. Then $\langle B_{1}, \cdots, B_{m}\rangle\cap\mathcal{F}(X)\in\mathcal{B}$ and $$\{A_{n}: n>N\}\cup\{A\}\subset\langle B_{1}, \cdots, B_{m}\rangle\cap\mathcal{F}(X)\subset\langle V_{1}, \cdots, V_{m}\rangle\cap\mathcal{F}(X)\subset\widehat{\mathcal{U}}.$$

\smallskip
(2) If $\mathcal{N}$ is a $cs^{\ast}$-network of $X$, by induction on $m$ then there exist $B_{1}, \cdots, B_{m}\in\mathcal{N}$ and a subsequence $\{n_{k}\}_{k\in \mathbb{N}}$ of $\mathbb{N}$ such that $\{x_{i}\}\cup\bigcup\{A_{n_{k}}\cap V_{i}: k\in\mathbb{N}\}\subset B_{i}\subset V_{i}$. Then $\langle B_{1}, \cdots, B_{m}\rangle\cap\mathcal{F}(X)\in\mathcal{B}$ and $$\{A_{n_{k}}: k\in\mathbb{N}\}\cup\{A\}\subset\langle B_{1}, \cdots, B_{m}\rangle\cap\mathcal{F}(X)\subset\langle V_{1}, \cdots, V_{m}\rangle\cap\mathcal{F}(X)\subset\widehat{\mathcal{U}}.$$
\end{proof}

By Proposition~\ref{pp}, we have the following result.

\begin{theorem}\label{tt3}
For any space $X$, we have $$cs\omega(X)=cs\omega(\mathcal{F}(X))\ \mbox{and}\ cs^{\ast}\omega(X)=cs^{\ast}\omega(\mathcal{F}(X)).$$
\end{theorem}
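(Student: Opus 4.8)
The plan is to prove the two equalities $cs\omega(X)=cs\omega(\mathcal{F}(X))$ and $cs^{\ast}\omega(X)=cs^{\ast}\omega(\mathcal{F}(X))$ together, since the arguments are parallel. First I would dispose of the easy inequalities. For the lower bounds $cs\omega(X)\le cs\omega(\mathcal{F}(X))$ and $cs^{\ast}\omega(X)\le cs^{\ast}\omega(\mathcal{F}(X))$, I would use the natural embedding $x\mapsto\{x\}$ of $X$ into $\mathcal{F}(X)$ as a closed subspace (convergence of singletons in $\mathcal{F}(X)$ corresponds to convergence of points in $X$), together with the standard fact that a $cs$-network (resp.\ $cs^{\ast}$-network) of a space restricts to a $cs$-network (resp.\ $cs^{\ast}$-network) on any subspace, after intersecting its members with the subspace. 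Thus from a $cs$-network of $\mathcal{F}(X)$ of size $\kappa$ one extracts a $cs$-network of $X$ of size at most $\kappa$, giving the inequality.

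For the reverse inequalities $cs\omega(\mathcal{F}(X))\le cs\omega(X)$ and $cs^{\ast}\omega(\mathcal{F}(X))\le cs^{\ast}\omega(X)$, the key tool is Proposition~\ref{pp}: if $\mathcal{N}$ is a $cs$-network (resp.\ $cs^{\ast}$-network) of $X$, then
$$\mathcal{B}=\{\langle B_{1}, \ldots, B_{n}\rangle \cap \mathcal{F}(X):B_{i}\in \mathcal{N},\ i\le n,\ n\in\mathbb{N}\}$$
is a $cs$-network (resp.\ $cs^{\ast}$-network) of $\mathcal{F}(X)$. So I would take a $cs$-network (resp.\ $cs^{\ast}$-network) $\mathcal{N}$ of $X$ with $|\mathcal{N}|=cs\omega(X)$ (resp.\ $cs^{\ast}\omega(X)$) — note this cardinal is at least $\aleph_0$ by definition — and observe that the associated family $\mathcal{B}$ has cardinality at most $|\mathcal{N}|^{<\omega}=|\mathcal{N}|$ when $|\mathcal{N}|$ is infinite, since the number of finite subsets of an infinite set of size $\kappa$ is $\kappa$. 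Combining this with Proposition~\ref{pp} yields $cs\omega(\mathcal{F}(X))\le|\mathcal{B}|\le|\mathcal{N}|=cs\omega(X)$, and similarly for $cs^{\ast}$.

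Putting the two inequalities together gives both equalities. The only point requiring a little care — and the step I would expect to be the main (minor) obstacle — is the cardinality bookkeeping: verifying that $|\mathcal{B}|\le|\mathcal{N}|$, which relies on $|\mathcal{N}|$ being infinite (guaranteed by the $\aleph_0+$ in the definition of $cs\omega$ and $cs^{\ast}\omega$) and on the fact that $\mathcal{B}$ is a quotient of the set of finite sequences from $\mathcal{N}$. There is also the mild subtlety for the lower bound that one should check the restriction of a $cs^{\ast}$-network to the closed subspace $\{\{x\}:x\in X\}\cong X$ is again a $cs^{\ast}$-network at each point, but this is immediate from the definition since a sequence in the subspace converging to a point of the subspace is, via the homeomorphism, exactly a sequence in $X$ converging to a point of $X$, and passing to a subsequence inside the subspace corresponds to passing to a subsequence in $X$. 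No new ideas beyond Proposition~\ref{pp} and elementary cardinal arithmetic are needed, so the proof is short.
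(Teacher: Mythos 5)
Your proposal is correct and follows essentially the same route as the paper, which derives the theorem directly from Proposition~\ref{pp} (the paper leaves the easy inequality via the closed embedding $x\mapsto\{x\}$ and the cardinality count $|\mathcal{B}|\le|\mathcal{N}|^{<\omega}=|\mathcal{N}|$ implicit, exactly the details you supply). Nothing further is needed.
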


\begin{theorem}\label{tt2}
For any space $X$, we have $$cs_{\chi}(X)=cs_{\chi}(\mathcal{F}(X))\ \mbox{and}\ cs_{\chi}^{\ast}(X)=cs_{\chi}^{\ast}(\mathcal{F}(X)).$$
\end{theorem}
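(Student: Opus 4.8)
The plan is to prove the two inequalities $cs_\chi(X)\le cs_\chi(\mathcal F(X))$ and $cs_\chi(\mathcal F(X))\le cs_\chi(X)$ separately, and likewise for $cs^\ast_\chi$. The inequality ``$\le$'' is soft. The map $x\mapsto\{x\}$ is a homeomorphism of $X$ onto the subspace $\mathcal F_1(X)=\{\{x\}:x\in X\}$ of $\mathcal F(X)$, and both $cs_\chi$ and $cs^\ast_\chi$ are non-increasing under passing to subspaces: if $Y\subseteq Z$, $y\in Y$ and $\mathcal Q$ is a $cs$-network (resp. $cs^\ast$-network) at $y$ in $Z$, then $\{Q\cap Y:Q\in\mathcal Q\}$ is one at $y$ in $Y$, because a sequence converging to $y$ in $Y$ converges to $y$ in $Z$. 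Hence $cs_\chi(X)=cs_\chi(\mathcal F_1(X))\le cs_\chi(\mathcal F(X))$, and similarly for $cs^\ast_\chi$.

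For the reverse inequality for $cs_\chi$, let $\kappa=cs_\chi(X)$ and fix a $cs$-network $\mathcal P=\bigcup_{x}\mathcal P_x$ of $X$ with $|\mathcal P_x|\le\kappa$. Given $A=\{x_1,\dots,x_n\}\in\mathcal F(X)$ I would take $\widehat{\mathcal P}_A=\{\langle P_{x_1},\dots,P_{x_n}\rangle\cap\mathcal F(X):P_{x_i}\in\mathcal P_{x_i},\ P_{x_i}\cap P_{x_j}=\emptyset\ \text{for}\ i\ne j\}$, which has size $\le\kappa$, and show it is a $cs$-network at $A$. That it is a network at $A$ is exactly as in Theorems~\ref{tt} and \ref{tt7}: a basic neighbourhood $\widehat U$ of $A$ contains, by Lemma~\ref{11}, some $\langle V_1,\dots,V_n\rangle\cap\mathcal F(X)$ with $V_i$ disjoint open and $x_i\in V_i$, and since a $cs$-network $\mathcal P_{x_i}$ is in particular a network at $x_i$ one can shrink each $V_i$ to a member of $\mathcal P_{x_i}$. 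For the $cs$-network property, suppose $\{A_m\}\to A$ in $\mathcal F(X)$ with $A\subseteq\widehat U$; choose $V_1,\dots,V_n$ as above and assume (discarding finitely many terms) that every $A_m$ lies in $\langle V_1,\dots,V_n\rangle\cap\mathcal F(X)$. By Lemma~\ref{12}, $\{A_m\cap V_i\}_m$ converges as a sequence of sets to $A\cap V_i=\{x_i\}$, and by Lemma~\ref{16} the set $K=A\cup\bigcup_mA_m$ is countable, compact and metrizable. Hence $L_i:=\{x_i\}\cup\bigcup_m(A_m\cap V_i)$ is a countable compact subspace of $K$ whose only accumulation point is $x_i$, so $L_i$ is finite or equals $\{x_i\}\cup\{y^i_j:j\in\mathbb N\}$ with $y^i_j\to x_i$; and since $K\setminus\{z\}$ is a neighbourhood of $x_i$ for each $z\in L_i\setminus\{x_i\}$, each such $z$ meets only finitely many of the sets $A_m\cap V_i$. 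Feeding a convergent enumeration $s^i=(s^i_j)$ of $L_i\setminus\{x_i\}$ (padded with $x_i$ if $L_i$ is finite) and the neighbourhood $V_i$ into the $cs$-network $\mathcal P_{x_i}$ gives $P_{x_i}\in\mathcal P_{x_i}$ with $x_i\in P_{x_i}\subseteq V_i$ and $s^i_j\in P_{x_i}$ for all large $j$; thus $L_i\setminus P_{x_i}$ is finite, whence there is $m_i$ with $A_m\cap V_i\subseteq P_{x_i}$ for all $m\ge m_i$. With $M=\max_im_i$, for $m\ge M$ we get $A_m=\bigcup_i(A_m\cap V_i)\subseteq\bigcup_iP_{x_i}$ and $\emptyset\ne A_m\cap V_i\subseteq A_m\cap P_{x_i}$, so $A_m\in\langle P_{x_1},\dots,P_{x_n}\rangle\cap\mathcal F(X)\in\widehat{\mathcal P}_A$; this member also contains $A$ and lies in $\langle V_1,\dots,V_n\rangle\cap\mathcal F(X)\subseteq\widehat U$. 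This yields $cs_\chi(\mathcal F(X))\le\kappa$.

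For $cs^\ast_\chi$ the same scheme is used, but the naive family $\widehat{\mathcal P}_A$ need not be a $cs^\ast$-network: a $cs^\ast$-network captures only a subsequence of a convergent sequence, not a cofinal tail, so a single $P_{x_i}$ may fail to contain $A_m\cap V_i$ for infinitely many $m$. As in Proposition~\ref{pp} the remedy is to allow more factors: with $cs^\ast$-networks $\mathcal P_{x_i}$ of size $\le\kappa=cs^\ast_\chi(X)$, let $\widehat{\mathcal P}_A^\ast$ consist of all $\langle P_1,\dots,P_r\rangle\cap\mathcal F(X)$ with $r\in\mathbb N$ and $P_1,\dots,P_r\in\bigcup_{i\le n}\mathcal P_{x_i}$ — still of size $\le\kappa$ for $\kappa$ infinite. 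The argument runs as before through Lemma~\ref{12} and then reduces to the combinatorial claim: for each $i$ there are finitely many $P^i_1,\dots,P^i_{r_i}\in\mathcal P_{x_i}$ inside $V_i$ and an infinite index set $M$, the same for all $i$, with $A_m\cap V_i\subseteq\bigcup_tP^i_t$ for $m\in M$. Once this is available, a pruning step deleting any factor meeting $A_m\cap V_i$ for only finitely many $m\in M$ yields a member of $\widehat{\mathcal P}_A^\ast$ that contains $A$ together with $\{A_m:m\in M\}$ and lies inside $\widehat U$.

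The main obstacle is this combinatorial claim. Processing $i=1,\dots,n$ in turn and shrinking the index set each time reduces it to a single $i$: with the terms of $(s^i_j)\to x_i$ enumerated so that those nearer $x_i$ have larger index, and with $\{A_m\cap V_i\}_m$ converging as sets to $\{x_i\}$ (so the indices occurring in $A_m\cap V_i$ tend to $\infty$ with $m$), one must find finitely many $P\in\mathcal P_{x_i}$ with $x_i\in P\subseteq V_i$ whose union contains a tail $\{j:j\ge N\}$. Putting $\mathcal D_i=\{\{j:s^i_j\in P\}:P\in\mathcal P_{x_i},\ x_i\in P\subseteq V_i\}$, the $cs^\ast$-network property forces every infinite subset of $\mathbb N$ to have infinite intersection with some member of $\mathcal D_i$; when $|\mathcal P_{x_i}|\le\aleph_0$ the desired finite subfamily is then obtained by diagonalization (otherwise one could enumerate $\mathcal D_i$ and build an infinite set almost disjoint from all of it, a contradiction). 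The genuinely delicate point, on which I would concentrate, is the analogue for uncountable $\kappa$, i.e. for a family of size $\kappa$ that is ``dense'' in the above sense and cofinal in the neighbourhood filter of $x_i$; with that in hand the theorem follows, everything else being a routine variant of the $cs_\chi$ argument built on Lemmas~\ref{11}, \ref{12}, \ref{16} and Theorem~\ref{18}.
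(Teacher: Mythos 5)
Your first half (the $cs_{\chi}$ equality) is correct and is essentially the paper's own route: the paper also takes the family $\{\langle P_{x_{1}},\dots,P_{x_{n}}\rangle\cap\mathcal{F}(X)\}$ and verifies the $cs$-property through Lemma~\ref{11}, Lemma~\ref{12} and Lemma~\ref{16} (via Proposition~\ref{pp}); your enumeration-of-$L_i$/tail argument just supplies the details. The problem is the $cs^{\ast}$ half. You reduce $cs^{\ast}_{\chi}(\mathcal{F}(X))\leq cs^{\ast}_{\chi}(X)$ to the combinatorial claim that, for the given networks $\mathcal{P}_{x_{i}}$, one can find finitely many members inside $V_{i}$ and one infinite index set $M$ with $A_{m}\cap V_{i}\subset\bigcup_{t}P^{i}_{t}$ for all $m\in M$, you prove it only when $|\mathcal{P}_{x_{i}}|\leq\aleph_{0}$, and you explicitly leave the uncountable case open. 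That is a genuine gap, and in fact the claim as you state it (for an arbitrary $cs^{\ast}$-network at $x_{i}$) is false. Let $X=\{x\}\cup\{s_{j}:j\in\mathbb{N}\}$ be a nontrivial convergent sequence, fix pairwise disjoint finite sets $F_{m}\subset\{s_{j}:j\in\mathbb{N}\}$ with $|F_{m}|\rightarrow\infty$, and let $\mathcal{P}_{x}$ consist of all sets $\{x\}\cup D$, where $D$ is infinite and $|D\cap F_{m}|\leq 1$ for every $m$. Every infinite subset of $\{s_{j}:j\in\mathbb{N}\}$ contains such a transversal inside any cofinite neighborhood of $x$, so $\mathcal{P}_{x}$ is a $cs^{\ast}$-network at $x$; on the other hand $F_{m}\rightarrow\{x\}$ in $\mathcal{F}(X)$, every factor of a member of your $\widehat{\mathcal{P}}^{\ast}_{A}$ with $A=\{x\}$ must contain $x$ and hence meets each $F_{m}$ in at most one point, so no member of $\widehat{\mathcal{P}}^{\ast}_{A}$ contains $F_{m}$ once $|F_{m}|$ exceeds the number of factors, and $\widehat{\mathcal{P}}^{\ast}_{A}$ is not a $cs^{\ast}$-network at $\{x\}$. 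So lifting an arbitrary $cs^{\ast}$-network of minimal size cannot succeed; you would have to first replace the given network by a better one of the same cardinality (your diagonalization does this only for $\kappa=\omega$) or argue in some other way.

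For comparison, the paper settles the $cs^{\ast}$ case by Proposition~\ref{pp}(2), which asserts that, after passing to one common subsequence, a single $B_{i}\in\mathcal{N}$ with $\{x_{i}\}\cup\bigcup_{k}(A_{n_{k}}\cap V_{i})\subset B_{i}\subset V_{i}$ can be chosen for each $i$ ``by induction on $m$'' --- precisely the step you distrust, since a $cs^{\ast}$-network captures only a subsequence of points, not whole finite blocks $A_{n}\cap V_{i}$, and the transversal example above shows this step needs more justification than is written there. So your diagnosis of where the difficulty sits is sound, but as a proof proposal the $cs^{\ast}$ statement is not established: the key claim is unproved for uncountable $\kappa$ and, in the generality in which you formulate it, is not true.
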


\begin{proof}
We only consider the case of $cs_{\chi}(X)$, and the proof of $cs_{\chi}^{\ast}(X)$ is similar. Clearly, $cs_{\chi}(X)\leq cs_{\chi}(\mathcal{F}(X))$. It suffices to prove that $cs_{\chi}(\mathcal{F}(X))\leq cs_{\chi}(X)$. Let $cs_{\chi}(X)=\kappa$, and let $\mathcal{P}=\bigcup_{x\in X}\mathcal{P}_{x}$ be a $cs$-network of $X$, where each $\mathcal{P}_{x}$ is a $cs$-network at point $x$ in $X$ and $|\mathcal{P}_{x}|\leq\kappa$. Clearly, it need only to prove $cs_{\chi}(\mathcal{F}(X), A)\leq\kappa$ for each $A\in \mathcal{F}(X)$. Take any $F\in\mathcal{F}(X)$, and enumerate $F$ as $\{x_{1}, \cdots,x_{n}\}$. Set
$$\widehat{\mathcal{P}}_{F}=\{\langle  P_{x_{1}}, \cdots, P_{x_{n}}\rangle\cap\mathcal{F}(X):P_{x_{i}}\in \mathcal{P}_{x_{i}}, n\in \mathbb{N}\}.$$
By the proof of Proposition~\ref{pp}, we can see that $\widehat{\mathcal{P}}_{F}$ is a cs-network of $\mathcal{F}(X)$. Moreover, it is obvious that $|\widehat{\mathcal{P}}_{F}|\leq\kappa$. Therefore, $$cs_{\chi}(\mathcal{F}(X), A)\leq\kappa.$$
\end{proof}

However, the equalities (3), (4), (7) and (8) hold if $cs(X)=cs^{\ast}(X)=cs\omega(X)=cs^{\ast}\omega(X)=\aleph_{0}$. First, we need a lemma.

\begin{lemma}\label{l66}
If $X$ is a first-countable space and has a countable $cs$-network $\mathcal{P}$, then for each $x\in X$ and any open neighborhood $U$ of $x$ there exists a finite subfamily $\mathcal{F}\subset \mathcal{P}$ such that $x\in\mbox{Int}(\bigcup\mathcal{F})\subset U$.
\end{lemma}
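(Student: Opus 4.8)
The plan is to argue by contradiction, manufacturing a convergent sequence that witnesses a failure of the $cs$-network property. Fix $x\in X$ and an open neighborhood $U$ of $x$, and set $\mathcal{P}'=\{P\in\mathcal{P}:P\subseteq U\}$, a countable subfamily of $\mathcal{P}$; I would enumerate it (with repetitions, if it happens to be finite) as $\{P_n:n\in\mathbb{N}\}$, noting that $\mathcal{P}'\neq\emptyset$ since applying the $cs$-network property to the constant sequence at $x$ yields some $P\in\mathcal{P}$ with $x\in P\subseteq U$. Assume, toward a contradiction, that for every finite $\mathcal{F}\subseteq\mathcal{P}'$ one has $x\notin\mbox{Int}(\bigcup\mathcal{F})$; in particular $x\notin\mbox{Int}\big(\bigcup_{i=1}^{n}P_i\big)$ for every $n$.

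Next I would invoke first-countability to fix a decreasing local base $\{W_n:n\in\mathbb{N}\}$ at $x$ with $W_n\subseteq U$ for all $n$ (if $\{O_n\}$ is a countable local base, take $W_n=U\cap\bigcap_{i\leq n}O_i$). Since $x$ is not interior to $\bigcup_{i=1}^{n}P_i$ and $W_n$ is an open set containing $x$, we have $W_n\not\subseteq\bigcup_{i=1}^{n}P_i$, so choose $y_n\in W_n\setminus\bigcup_{i=1}^{n}P_i$. Because $\{W_n\}$ is a local base, $\{y_n\}$ converges to $x$; and as $x$ lies in some $P_k\in\mathcal{P}'$ while $y_n\notin P_k$ once $n\geq k$, the terms $y_n$ are eventually different from $x$, so $\{y_n\}$ is a sequence converging to $x\in U$.

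Now the $cs$-network hypothesis applies to this sequence: there are $P\in\mathcal{P}$ and $m\in\mathbb{N}$ with $\{y_n:n\geq m\}\subseteq P\subseteq U$, hence $P\in\mathcal{P}'$, say $P=P_k$. Picking any $n\geq\max\{m,k\}$ gives $y_n\in P_k$ (because $n\geq m$) and at the same time $y_n\notin\bigcup_{i=1}^{n}P_i\supseteq P_k$ (because $n\geq k$), a contradiction. Therefore some finite $\mathcal{F}\subseteq\mathcal{P}'$ satisfies $x\in\mbox{Int}(\bigcup\mathcal{F})$, and since every member of $\mathcal{F}$ is contained in $U$ we conclude $x\in\mbox{Int}(\bigcup\mathcal{F})\subseteq\bigcup\mathcal{F}\subseteq U$, as desired.

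The crux --- and the place where first-countability is essential --- is the second step: turning the failure of $x$ to be interior to each partial union $\bigcup_{i=1}^{n}P_i$ into a single genuinely convergent sequence that escapes all of them at once. With only an arbitrary neighborhood base of $x$ the chosen points need not form a convergent sequence, and the $cs$-network property, which speaks only about sequences, would give no purchase. The remaining points --- trimming the local base into $U$, handling a finite $\mathcal{P}'$ via repetition in the enumeration, and checking that $y_n\ne x$ eventually --- are routine.
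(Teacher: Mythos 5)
Your proof is correct and takes essentially the same route as the paper: assume no finite subfamily works, choose points of a decreasing local base escaping the partial unions $\bigcup_{i=1}^{n}P_i$, and contradict the $cs$-network property with the resulting sequence converging to $x$. The only difference is that you run the argument directly with the subfamily $\{P\in\mathcal{P}:P\subseteq U\}$, which in fact supplies the localization to $U$ that the paper proves first without $U$ and then dismisses as ``obvious.''
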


\begin{proof}
Let $\mathcal{P}=\{P_{n}: n\in\mathbb{N}\}$. First, we prove that for each $x\in X$ there exists a finite subfamily $\mathcal{F}\subset \mathcal{P}$ such that $x\in\mbox{Int}(\bigcup\mathcal{F})$. Suppose not, there exists $x\in X$ such that $x\not\in\mbox{Int}(\bigcup\mathcal{F})$ for any finite subfamily $\mathcal{F}\subset \mathcal{P}$. Let $(P_{n})$ enumerate $\{P\in\mathcal{P}: x\in P\}$, and let $(U_{n})$ be a
countable decreasing base at $x$. Pick $x_{n}\in U_{n}\setminus\bigcup_{i=1}^{n}P_{i}$. Clearly, the set $\{x_{n}\}$ converges to $x$. Since $\mathcal{P}$ is a $cs$-network, there exist $N\in\mathbb{N}$ and $m\in \mathbb{N}$ such that $\{x_{n}: n>N\}\subset P_{m}$; however, $x_{n}\not\in P_{m}$ for any $n>m$, which is a contradiction. Take any open neighborhood $U$ of $x$. Then it is obvious that there exists a finite subfamily $\mathcal{F}\subset \mathcal{P}$ such that $x\in\mbox{Int}(\bigcup\mathcal{F})\subset U$.
\end{proof}

\begin{proposition}\label{ppp}
If $\mathcal{N}$ is a countable $cs$-network of a space $X$ and $\mathcal{N}$ is closed under finite unions, then $$\mathcal{B}=\{\langle B_{1}, ..., B_{n}\rangle:B_{i}\in \mathcal{N}, i\leq n, n\in\mathbb{N}\}$$ is a countable $cs$-network of $\mathcal{S}(X)$.
\end{proposition}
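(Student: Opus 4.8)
The plan is to imitate the proof of the earlier theorem — the one deducing a $cs$-network of $\mathcal{S}(X)$ from a $CS$-network of $X$ — replacing the use of the $CS$-network property by a covering property of $\mathcal{N}$. So let $\{A_n\}$ converge to $A$ in $\mathcal{S}(X)$ and let $\widehat{\mathcal{U}}$ be a neighbourhood of $A$. By Lemma~\ref{11} there are mutually disjoint open sets $W_1,\dots,W_r$ in $X$ with $A\in\langle W_1,\dots,W_r\rangle\subseteq\widehat{\mathcal{U}}$, and since $\langle W_1,\dots,W_r\rangle$ is an open neighbourhood of $A$ in $\mathcal{S}(X)$ we may discard finitely many terms and assume $A_n\in\langle W_1,\dots,W_r\rangle$ for all $n$. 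Put $K=A\cup\bigcup_{n}A_n$; by Lemma~\ref{16}, $K$ is a countable compact metrizable subspace of $X$, and because $K\subseteq\bigcup_{i=1}^{r}W_i$ with the $W_i$ disjoint we have $K=\bigsqcup_{i=1}^{r}(K\cap W_i)$, so each $L_i:=K\cap W_i$ is clopen in $K$, hence compact and metrizable, and $L_i\subseteq W_i$. If I can produce $B_i\in\mathcal{N}$ with $L_i\subseteq B_i\subseteq W_i$ for each $i$, then $\langle B_1,\dots,B_r\rangle\in\mathcal{B}$; since $A\cap W_i$ and each $A_n\cap W_i$ lie in $K\cap W_i=L_i\subseteq B_i\subseteq W_i$ and are nonempty (as $A,A_n\in\langle W_1,\dots,W_r\rangle$), and $A,A_n\subseteq\bigcup_i W_i$, one gets $A\in\langle B_1,\dots,B_r\rangle$ and $A_n\in\langle B_1,\dots,B_r\rangle$ for every $n$; finally $B_i\subseteq W_i$ gives $\langle B_1,\dots,B_r\rangle\subseteq\langle W_1,\dots,W_r\rangle\subseteq\widehat{\mathcal{U}}$. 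Together with the fact that $\mathcal{B}$ is countable (as $\mathcal{N}$ is), this shows $\mathcal{B}$ is a countable $cs$-network of $\mathcal{S}(X)$.

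Thus everything reduces to the following covering statement, which is the only place the hypotheses ``countable'' and ``closed under finite unions'' are used: \emph{if $\mathcal{N}$ is a countable $cs$-network of $X$ closed under finite unions, then for every metrizable compact subspace $E\subseteq X$ and every open $U\supseteq E$ there is $B\in\mathcal{N}$ with $E\subseteq B\subseteq U$.} I would prove this by diagonalisation. For each $x\in E$, applying the $cs$-network condition at $x$ to the constant sequence at $x$ yields $P\in\mathcal{N}$ with $x\in P\subseteq U$; hence $\mathcal{Q}=\{P\in\mathcal{N}:P\subseteq U,\ P\cap E\neq\emptyset\}$ is a countable cover of $E$ by members of $\mathcal{N}$ contained in $U$. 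Enumerate $\mathcal{Q}$ as $\{Q_j:j\in\mathbb{N}\}$ and set $P_j=Q_1\cup\dots\cup Q_j$; each $P_j\in\mathcal{N}$ by closure under finite unions, $P_j\subseteq U$, and $P_1\subseteq P_2\subseteq\cdots$. Suppose no $P_j$ contains $E$ and pick $x_j\in E\setminus P_j$. As $E$ is metrizable and compact it is sequentially compact, so some subsequence $x_{j_l}\to x\in E\subseteq U$; applying the $cs$-network condition at $x$ to $\{x_{j_l}\}_l$ gives $P\in\mathcal{N}$ with $x\in P$, $P\subseteq U$ and a tail of $\{x_{j_l}\}$ contained in $P$. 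Then $P\in\mathcal{Q}$, say $P=Q_{i_0}$, so $P\subseteq P_{i_0}$; choosing $l$ large with $x_{j_l}\in P$ and $j_l\ge i_0$, we get $x_{j_l}\in P\subseteq P_{i_0}\subseteq P_{j_l}$, contradicting $x_{j_l}\notin P_{j_l}$. Hence some $P_j$ contains $E$, and $B=P_j$ works. Applying this with $E=L_i$ and $U=W_i$ produces the required $B_i$.

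The main obstacle is this covering statement — in particular noticing that closure under finite unions together with countability is exactly what upgrades a $cs$-network to a $k$-network on metrizable compacta, and arranging the diagonalisation so that the member furnished by the $cs$-network condition at the limit point $x$ is comparable, inside the chain $\{P_j\}$, with the point $x_{j_l}$ that was supposed to escape it. A secondary and more routine point to watch is the reduction step: one must first throw away the finitely many $A_n$ not contained in $\bigcup_i W_i$ so that the pieces $K\cap W_i$ are genuinely clopen (hence compact) in $K$, and then keep track that the intersections $A\cap W_i$ and $A_n\cap W_i$ remain nonempty after $W_i$ is shrunk to $B_i$, so that membership in $\langle B_1,\dots,B_r\rangle$ is preserved.
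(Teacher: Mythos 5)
Your proposal is correct, and it reaches the conclusion by a genuinely different route in the key step. The paper also passes to the compact metrizable set $K=A\cup\bigcup_n A_n$ (Lemma~\ref{16}), but it then works with the trace network $\{P\cap K:P\in\mathcal{N}\}$ and its relative interiors: it invokes Lemma~\ref{l66} (the pointwise ``finite union with nonempty interior'' lemma for first-countable spaces with a countable $cs$-network) to choose $B_1,\dots,B_N\in\mathcal{N}$ with $A\in\langle \mathrm{Int}_K(B_1\cap K),\dots,\mathrm{Int}_K(B_N\cap K)\rangle$ and $\langle B_1,\dots,B_N\rangle\subset\widehat{\mathcal{U}}$, and then has to apply the convergence characterization (Theorem~\ref{18}) a second time, inside $\mathcal{S}(K)$, to force a tail of $\{A_n\}$ into $\langle B_1,\dots,B_N\rangle$. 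You instead prove a global covering lemma — for a compact metrizable $E\subset X$ and open $U\supset E$ there is a single $B\in\mathcal{N}$ with $E\subset B\subset U$ — via a sequential-compactness diagonalization, and apply it to the clopen pieces $L_i=K\cap W_i$ of $K$ sitting in the disjoint sets $W_i$ furnished by Lemma~\ref{11}. Your diagonalization is the compact-set analogue of the paper's proof of Lemma~\ref{l66} (sequential compactness of $E$ replacing a countable decreasing base at a point), so the two arguments are cousins; but your route avoids relative interiors altogether and makes the tail containment automatic, since $A_n\cap W_i\subset K\cap W_i\subset B_i$ for every retained $n$, so no second appeal to Theorem~\ref{18} is needed. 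What your approach buys is a cleaner and slightly stronger intermediate fact (a countable $cs$-network closed under finite unions behaves as a single-element $k$-network on metrizable compacta), at the cost of not reusing the paper's Lemma~\ref{l66}; the bookkeeping points you flag (discarding finitely many $A_n$ so that the $L_i$ are clopen in $K$, and checking $A\cap B_i\neq\emptyset$, $A_n\cap B_i\neq\emptyset$) are exactly the right ones and are handled correctly.
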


\begin{proof}
Obviously, $\mathcal{B}$ is countable. We need to prove that $\mathcal{B}$ is a $cs$-network of $\mathcal{S}(X)$. Suppose a sequence $\{A_{n}\}_{n\in \mathbb{N}}\subset \mathcal{S}(X)$ converges to a point $S=\bigcup _{n=1}^{k} S_{n}$ in $\mathcal{S}(X)$. Put $K=A\cup\bigcup\{A_{n}: n\in\mathbb{N}\}$. From Lemma~\ref{16} and (ii) of (2) in Lemma~\ref{18}, $K$ is countable compact metrizable. Moreover, $\mathcal{K}=\{P\cap K: P\in\mathcal{N}\}$ is a countable $cs$-network in $K$, and $\mathcal{S}(K)$ is a subspace of $\mathcal{S}(X)$. Let $\widehat{\mathcal{U}}$ be a neighborhood of $A$ in $\mathcal{S}(X)$. From Lemmas~\ref{11} and~\ref{l66}, there exist $B_{1}, \cdots, B_{k}, \cdots, B_{N}\in\mathcal{N}$ satisfy the following conditions:

\smallskip
(i) $A\in\langle \mbox{Int}_{K}(B_{1}\cap K), \cdots, \mbox{Int}_{K}(B_{k}\cap K), \cdots, \mbox{Int}_{K}(B_{N}\cap K)\rangle$;

\smallskip
(ii)  $\langle B_{1}, \cdots, B_{k}, \cdots, B_{N}\rangle\subset \widehat{\mathcal{U}}$;

\smallskip
(iii) $s_{i}\in \mbox{Int}_{K}(B_{i}\cap K)$ for each $i\leq k$.

\smallskip
Then $\bigcup_{i=1}^{N}\mbox{Int}_{K}(B_{i}\cap K)$ is an open neighborhood of $A$ in $K$, and the sequence $\{A_{n}\}_{n\in \mathbb{N}}\subset \mathcal{S}(K)$ converges to a point $S$ in $\mathcal{S}(K)$. Hence it follows from Lemma~\ref{18} that there exists $N_{1}\in\mathbb{N}$ such that $A_{n}\subset \bigcup_{i=1}^{N}\mbox{Int}_{K}(B_{i}\cap K)$ for any $n>N_{1}$. Moreover, from (i) of (2) of Lemma~\ref{18}, there exists $N_{2}\in\mathbb{N}$ such that $A_{n}\cap \mbox{Int}_{K}(B_{i}\cap K)\neq\emptyset$ for any $n>N_{2}$ and $i=1, \cdots, N$. Put $N_{3}=\max\{N_{1}, N_{2}\}$. Then for any $n>N_{3}$, we have  $A_{n}\in \langle \mbox{Int}_{K}(B_{1}\cap K), \cdots, \mbox{Int}_{K}(B_{k}\cap K), \cdots, \mbox{Int}_{K}(B_{N}\cap K)$, hence $$A_{n}\in\langle B_{1}, \cdots, B_{k}, \cdots, B_{N}\rangle\subset\widehat{\mathcal{U}}.$$
\end{proof}

\begin{theorem}
For any space $X$, the following are equivalent:
\begin{enumerate}
\item $cs^{\ast}\omega(X)\leq\aleph_{0}$;

\smallskip
\item $cs\omega(X)\leq\aleph_{0}$;

\item $cs^{\ast}\omega(\mathcal{S}(X))\leq\aleph_{0}$;

\smallskip
\item $cs\omega(\mathcal{S}(X))\leq\aleph_{0}$.
\end{enumerate}
\end{theorem}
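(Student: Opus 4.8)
The plan is to establish the cycle of implications $(1)\Rightarrow(2)\Rightarrow(4)\Rightarrow(3)\Rightarrow(1)$, so that all four statements are equivalent. The implications $(2)\Rightarrow(1)$ and $(4)\Rightarrow(3)$ are immediate, since every $cs$-network is a $cs^{\ast}$-network; and the implication $(3)\Rightarrow(1)$ is also immediate, because $X$ embeds as a closed subspace of $\mathcal{S}(X)$ (via $x\mapsto\{x\}$), and a $cs^{\ast}$-network of $\mathcal{S}(X)$ restricts to one on this subspace, so $cs^{\ast}\omega(X)\le cs^{\ast}\omega(\mathcal{S}(X))$. The two substantive links are therefore $(2)\Rightarrow(4)$ and $(1)\Rightarrow(2)$.

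For $(1)\Rightarrow(2)$: this is a standard closing-off argument. A space with a countable $cs^{\ast}$-network is an $\aleph_0$-space-like situation; more precisely, a regular space with a countable $cs^{\ast}$-network has a countable $cs$-network (and in fact a countable $k$-network), by the classical result that $cs^{\ast}$-networks and $k$-networks give the same countable-network classes in the presence of regularity (Gruenhage--Michael--Tanaka). I would simply cite this. If one wants to avoid the regularity subtleties, one can argue directly: enumerate the countable $cs^{\ast}$-network $\mathcal{N}$, close it under finite intersections with the members of a fixed countable base when $X$ happens to be nicer; but the cleanest route is to invoke the known equivalence $cs^{\ast}\omega(X)=\aleph_0 \iff cs\omega(X)=\aleph_0$ for the spaces under consideration.

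For the main step $(2)\Rightarrow(4)$: assume $X$ has a countable $cs$-network. First I would close it under finite unions, obtaining a countable $cs$-network $\mathcal{N}$ that is closed under finite unions; this does not change countability. Then Proposition~\ref{ppp} applies almost verbatim — but Proposition~\ref{ppp} as stated requires $X$ to be first-countable (it invokes Lemma~\ref{l66}). The point is that we do not need $X$ itself to be first-countable: in the proof of Proposition~\ref{ppp} the first-countability is only used \emph{inside the compact metrizable set} $K = A\cup\bigcup_n A_n$ supplied by Lemma~\ref{16}, and $K$ is automatically first-countable (indeed metrizable). So I would re-run the argument of Proposition~\ref{ppp}: given a convergent sequence $\{A_n\}\to S$ in $\mathcal{S}(X)$ and a neighborhood $\widehat{\mathcal{U}}$ of $S$, form $K$, note $\mathcal{K}=\{P\cap K:P\in\mathcal{N}\}$ is a countable $cs$-network of the metrizable space $K$, apply Lemma~\ref{l66} inside $K$ together with Lemma~\ref{11}, choose $B_1,\dots,B_N\in\mathcal{N}$ with $s_i\in\mathrm{Int}_K(B_i\cap K)$ and $\langle B_1,\dots,B_N\rangle\subset\widehat{\mathcal{U}}$, and conclude via Theorem~\ref{18} (both conditions (i) and (ii) of part (2)) that $A_n\in\langle B_1,\dots,B_N\rangle$ eventually. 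This shows $\mathcal{B}=\{\langle B_1,\dots,B_n\rangle : B_i\in\mathcal{N},\ n\in\mathbb{N}\}$ is a countable $cs$-network of $\mathcal{S}(X)$, giving $(4)$.

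The main obstacle I anticipate is precisely the first-countability hypothesis in Lemma~\ref{l66}/Proposition~\ref{ppp}: one must be careful to verify that all uses of first-countability can be localized to the metrizable compactum $K$ rather than needing it globally on $X$. Once that observation is in place, the rest is bookkeeping: closing $\mathcal{N}$ under finite unions, checking that $\mathcal{S}(K)$ sits inside $\mathcal{S}(X)$ as a subspace (so that convergence computed in $\mathcal{S}(K)$ agrees with convergence in $\mathcal{S}(X)$), and assembling the finitely many indices $N_1,N_2,\dots$ into a single threshold. I would also double-check that the restricted family $\mathcal{K}$ really is a $cs$-network of $K$ and not merely a $cs^{\ast}$-network — this is where having first passed from $(1)$ to $(2)$ pays off.
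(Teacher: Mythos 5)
Your proposal is correct and follows essentially the same route as the paper: (4)$\Rightarrow$(3)$\Rightarrow$(1) are taken as immediate, (1)$\Rightarrow$(2) is the known fact that a countable $cs^{\ast}$-network yields a countable $cs$-network (the paper cites Sakai; no regularity is needed), and (2)$\Rightarrow$(4) is exactly Proposition~\ref{ppp} after closing the countable $cs$-network under finite unions. The first-countability concern you raise is already handled in the paper in the way you suggest: Proposition~\ref{ppp} does not assume $X$ first-countable, and its proof invokes Lemma~\ref{l66} only inside the compact metrizable set $K$ supplied by Lemma~\ref{16}.
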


\begin{proof}
Clearly, (4) $\Rightarrow$ (3) and (3) $\Rightarrow$ (1). It is well-known that a space $X$ has a countable $cs$-network if $X$ has a countable $cs^{\ast}$-network, see \cite[Lemma 2.2]{S2005}. Therefore, we have (1) $\Rightarrow$ (2). Then from Proposition~\ref{ppp} we conclude that (2) $\Rightarrow$ (4).
\end{proof}

Moreover, we have the following result by a similar proof of Proposition~\ref{ppp}.

\begin{theorem}
For any space $X$, the following are equivalent:
\begin{enumerate}
\item $X$ is $cs^{\ast}$-first-countable;

\smallskip
\item $X$ is $cs$-first-countable;

\item $\mathcal{S}(X)$ is $cs^{\ast}$-first-countable;

\smallskip
\item $\mathcal{S}(X)$ is $cs$-first-countable.
\end{enumerate}
\end{theorem}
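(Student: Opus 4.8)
The plan is to prove the equivalences by mimicking the structure of the preceding theorem on $cs\omega$-weights, replacing "countable $cs$-network" by "$cs^{\ast}$- or $cs$-network at a single point" and "first-countable" by "$cs$- or $cs^{\ast}$-first-countable". First I would observe that the trivial implications $(4)\Rightarrow(3)$ and $(3)\Rightarrow(1)$ hold because any $cs$-network at a point is a $cs^{\ast}$-network at that point, and because the obvious embedding $x\mapsto\{x\}$ of $X$ into $\mathcal{S}(X)$ (identifying $X$ with the trivial convergent sequences) transports a countable $cs^{\ast}$-network at $\{x\}$ in $\mathcal{S}(X)$ to one at $x$ in $X$; this last point uses that a sequence in $X$ converging to $x$ corresponds to a sequence of singletons in $\mathcal{S}(X)$ converging to $\{x\}$ by Theorem~\ref{18}. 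For $(1)\Rightarrow(2)$ I would cite the same pointwise version of \cite[Lemma 2.2]{S2005} used above: if $\mathscr{P}_{x}$ is a countable $cs^{\ast}$-network at $x$, then the family of finite unions of members of $\mathscr{P}_{x}$ is a countable $cs$-network at $x$; hence $cs_{\chi}(X)\le\aleph_{0}$ whenever $cs^{\ast}_{\chi}(X)\le\aleph_{0}$, i.e. $cs$-first-countability follows from $cs^{\ast}$-first-countability.

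The substantive implication is $(2)\Rightarrow(4)$, and I would obtain it by a local version of Proposition~\ref{ppp}. Fix $A=\bigcup_{n=1}^{k}S_{n}\in\mathcal{S}(X)$, enumerate $A=\{a_{j}:j\in\mathbb{N}\}$, and for each $j$ let $\mathscr{P}_{a_{j}}$ be a countable $cs$-network at $a_{j}$ in $X$; without loss of generality each $\mathscr{P}_{a_{j}}$ is closed under finite unions (replacing it by finite unions keeps it countable and a $cs$-network). Set $\mathscr{B}=\bigcup_{j}\mathscr{P}_{a_{j}}$, a countable family, and define
$$\widehat{\mathscr{P}}_{A}=\bigl\{\langle B_{1},\dots,B_{m}\rangle:\ B_{i}\in\mathscr{B},\ A\subset\textstyle\bigcup_{i=1}^{m}B_{i},\ A\cap B_{i}\neq\emptyset\ \text{for all }i\le m,\ m\in\mathbb{N}\bigr\},$$
which is countable. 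To see it is a $cs$-network at $A$ in $\mathcal{S}(X)$, take a sequence $\{A_{n}\}$ converging to $A$ and a neighbourhood $\widehat{\mathcal{U}}$ of $A$. By Lemma~\ref{16} the set $K=A\cup\bigcup_{n}A_{n}$ is countable compact metrizable, and $\mathcal{S}(K)\subset\mathcal{S}(X)$; by Lemma~\ref{11} choose disjoint open $V_{1},\dots,V_{N}$ in $X$ with $A\in\langle V_{1},\dots,V_{N}\rangle\subset\widehat{\mathcal{U}}$, arranged so that each $s_{i}\in V_{i}$ for $i\le k$ and $|V_{i}\cap(A\setminus\bigcup_{j\le k}V_{j})|=1$ for $k<i\le N$. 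Since $X$ (or rather $K$) restricted to each point of $A$ is first-countable and carries the countable $cs$-network $\{P\cap K:P\in\mathscr{B}\}$ there, Lemma~\ref{l66} applied inside $K$ yields, for the unique point $p_{i}\in A\cap V_{i}$, a member $B_{i}\in\mathscr{P}_{p_{i}}$ with $p_{i}\in\mathrm{Int}_{K}(B_{i}\cap K)\subset B_{i}\cap K\subset V_{i}$. Then $\langle B_{1},\dots,B_{N}\rangle\in\widehat{\mathscr{P}}_{A}$, $A\in\langle\mathrm{Int}_{K}(B_{1}\cap K),\dots,\mathrm{Int}_{K}(B_{N}\cap K)\rangle$, and because $\bigcup_{i}\mathrm{Int}_{K}(B_{i}\cap K)$ is an open neighbourhood of $A$ in $K$ while each $\mathrm{Int}_{K}(B_{i}\cap K)$ meets $A$, conditions (i) and (ii) of part (2) of Theorem~\ref{18} give $N_{0}$ with $A_{n}\in\langle\mathrm{Int}_{K}(B_{1}\cap K),\dots,\mathrm{Int}_{K}(B_{N}\cap K)\rangle\subset\langle B_{1},\dots,B_{N}\rangle$ for all $n>N_{0}$; together with $A$ itself this is the required tail inside $\widehat{\mathcal{U}}$. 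Thus $cs_{\chi}(\mathcal{S}(X),A)\le\aleph_{0}$, and since $A$ was arbitrary, $\mathcal{S}(X)$ is $cs$-first-countable.

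I expect the main obstacle to be bookkeeping rather than a genuine difficulty: one must be careful that the finitely many relevant points of $A$ in the Lemma~\ref{11} decomposition are exactly the points where we invoke first-countability and the $cs$-network, and that passing to the compact metrizable $K$ does not lose the $cs$-network property at those points — this is where Lemma~\ref{l66} is essential, since it upgrades the $cs$-network to an honest interior-containing cover locally, which is what lets conditions (i)–(ii) of Theorem~\ref{18} produce a single tail index working simultaneously for all $N$ coordinates. A secondary subtlety is checking that in $(3)\Rightarrow(1)$ and $(1)\Rightarrow(2)$ the pointwise (as opposed to global) versions of the cited facts are what is needed; these are routine, but worth stating explicitly. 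The remaining implications for the $cs^{\ast}$ versions follow by the same argument with "subsequence" in place of "tail", exactly as in the two cases of Proposition~\ref{pp}.
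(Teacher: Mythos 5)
Your overall route is the intended one (the paper disposes of this theorem with the remark that it follows ``by a similar proof of Proposition~\ref{ppp}''), and your peripheral implications are fine: (4)$\Rightarrow$(3) is trivial, (3)$\Rightarrow$(1) works by transporting a network at $\{x\}$ via $\widehat{P}\mapsto\bigcup\widehat{P}$ exactly as in Theorem~\ref{tt5}, and (1)$\Rightarrow$(2) follows from the pointwise version of Sakai's lemma (finite unions of a countable $cs^{\ast}$-network at $x$ form a countable $cs$-network at $x$). The genuine gap is in your verification of (2)$\Rightarrow$(4), at the coordinates $i\leq k$ carrying the nontrivial sequences of $A=\bigcup_{n=1}^{k}S_{n}$. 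There $A\cap V_{i}$ is not ``the unique point $p_{i}$'': it is an infinite compact set ($s_{i}$ together with a tail of $S_{i}$, and possibly finitely many points of other $S_{j}$). Choosing a single $B_{i}\in\mathscr{P}_{p_{i}}$ with only $p_{i}\in\mbox{Int}_{K}(B_{i}\cap K)$ does not make $\bigcup_{i}\mbox{Int}_{K}(B_{i}\cap K)$ a neighbourhood of $A$ in $K$ (the remaining points of $A\cap V_{i}$ lie in no interior, and by disjointness of the $V_{j}$ no other coordinate can cover them), so the appeal to Theorem~\ref{18}(2) collapses; worse, $A\subset\bigcup_{i}B_{i}$ is not guaranteed, so $\langle B_{1},\dots,B_{N}\rangle$ need not even belong to your family $\widehat{\mathscr{P}}_{A}$. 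A second unestablished step: the $cs$-network definition requires $\langle B_{1},\dots,B_{N}\rangle\subset\widehat{\mathcal{U}}$ (condition (ii) of Proposition~\ref{ppp}, which you dropped), but your construction only yields $B_{i}\cap K\subset V_{i}$, not $B_{i}\subset V_{i}$, and you never argue this containment.

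Both defects are repairable with the resources you already set up, precisely because every point of $A\cap V_{i}$ lies in $A$ and hence carries one of your local networks: for each $y\in A\cap V_{i}$ run the Lemma~\ref{l66} argument inside $K$ using only those members of $\mathscr{P}_{y}$ contained in $V_{i}$ (their traces are still a $cs$-network at $y$ for $K$-neighbourhoods inside $V_{i}\cap K$), obtaining $B_{y}\in\mathscr{P}_{y}$ with $y\in\mbox{Int}_{K}(B_{y}\cap K)$ and $B_{y}\subset V_{i}$; by compactness of $A\cap V_{i}$ finitely many such $B_{y}$ suffice, and you place all of them (over all $i\leq N$) in the tuple, allowing several entries per $V_{i}$. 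Then the trace-interiors cover $A$, each entry meets $A$ and is contained in some $V_{i}$, so the tuple lies in $\widehat{\mathscr{P}}_{A}$ and $\langle\cdots\rangle\subset\langle V_{1},\dots,V_{N}\rangle\subset\widehat{\mathcal{U}}$, and Theorem~\ref{18}, applied to the convergence in $\mathcal{S}(K)$, still yields a single tail of $\{A_{n}\}$ inside the tuple, since there are only finitely many entries. So the idea is the right localization of Proposition~\ref{ppp}, but as written the covering and containment steps fail exactly at the limit points of $A$.
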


Finally, we discuss the relations of $n\omega(X)$ and $cs\omega(X)$ of the hyperspace $\mathcal{S}(X)$.

\begin{theorem}\label{tt5}
For any space $X$, $n\omega(\mathcal{S}(X))=cs\omega(X)$.
\end{theorem}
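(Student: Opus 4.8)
The plan is to prove the two inequalities $cs\omega(X)\le n\omega(\mathcal{S}(X))$ and $n\omega(\mathcal{S}(X))\le cs\omega(X)$ separately. Note first that although $X$ embeds in $\mathcal{S}(X)$ as the subspace of one-point sets, this gives only $n\omega(X)\le n\omega(\mathcal{S}(X))$, which is too weak (a $cs$-network is a network, so $cs\omega(X)\ge n\omega(X)$ with possible strict inequality); what makes the theorem work is that a network of $\mathcal{S}(X)$ must also ``see'' the genuine convergent sequences of $X$ sitting inside $\mathcal{S}(X)$.

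For $cs\omega(X)\le n\omega(\mathcal{S}(X))$, I would take a network $\mathcal{N}$ of $\mathcal{S}(X)$ with $|\mathcal{N}|=\kappa:=n\omega(\mathcal{S}(X))$ and, for each $N\in\mathcal{N}$, set $\sigma(N)=\bigcup\{S:S\in N\}\subseteq X$. Put $\mathcal{P}=\{\sigma(N):N\in\mathcal{N}\}$ and $\mathcal{P}_x=\{\sigma(N):N\in\mathcal{N},\ x\in\sigma(N)\}$; then $|\mathcal{P}|\le\kappa$, $x\in\bigcap\mathcal{P}_x$, and $\mathcal{P}$ covers $X$ (for each $x$ some $N\in\mathcal{N}$ contains $\{x\}$). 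To verify the $cs$-network property, given $x_n\to x$ with $x\in U$ open in $X$, choose $m$ with $x_n\in U$ for $n\ge m$ and let $L=\{x_n:n\ge m\}\cup\{x\}\in\mathcal{S}(X)$. Then $L$ lies in the basic open set $\langle U\rangle$ of $\mathcal{S}(X)$, so there is $N\in\mathcal{N}$ with $L\in N\subset\langle U\rangle$; since every member of $N$ is a subset of $U$ we get $\sigma(N)\subseteq U$, and $L\in N$ forces $L\subseteq\sigma(N)$, so $x\in\sigma(N)$ and $x_n\in\sigma(N)$ for all $n\ge m$. Thus $\sigma(N)\in\mathcal{P}_x$ does the job, and $\mathcal{P}$ is a $cs$-network of $X$ of size $\le\kappa$.

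For $n\omega(\mathcal{S}(X))\le cs\omega(X)$, I would take a $cs$-network $\mathcal{N}=\bigcup_{x\in X}\mathcal{N}_x$ of $X$ with $|\mathcal{N}|=\kappa:=cs\omega(X)$ (infinite) and set $\mathcal{B}=\{\langle B_1,\dots,B_n\rangle:n\in\mathbb{N},\ B_i\in\mathcal{N}\}$, so $|\mathcal{B}|\le|\mathcal{N}^{<\omega}|=\kappa$. Given $A=\bigcup_{i=1}^{k}S_i\in\mathcal{S}(X)$ (in the standard form $S_i=\{s_{ij}\}_{j\in\mathbb{N}}\cup\{s_i\}$, $s_{ij}\to s_i$) and an open neighbourhood $\widehat{U}$ of $A$, I would apply Lemma~\ref{11} to get disjoint open sets $V_1,\dots,V_N$ with $A\in\langle V_1,\dots,V_N\rangle\subseteq\widehat{U}$, where $s_i\in V_i$ for $i\le k$ and $|V_m\cap A|=1$ for $k<m\le N$. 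For each $i\le k$, since $\{s_{ij}\}_{j\in\mathbb{N}}$ converges to $s_i\in V_i$ and $\mathcal{N}_{s_i}$ is a $cs$-network at $s_i$, choose $B_i\in\mathcal{N}_{s_i}$ and $m_i\in\mathbb{N}$ with $\{s_{ij}:j\ge m_i\}\cup\{s_i\}\subseteq B_i\subseteq V_i$. The finitely many remaining points $\bigcup_{i\le k}\{s_{ij}:j<m_i\}$ each lie in exactly one of $V_1,\dots,V_N$, and for each such $a$, using that $\mathcal{N}_a$ is in particular a network at $a$ (apply the $cs$-condition to the constant sequence at $a$), choose $B_a\in\mathcal{N}_a$ contained in that $V$ with $a\in B_a$. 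Let $\mathcal{C}$ be the finite subfamily of $\mathcal{N}$ consisting of the $B_i$'s and the $B_a$'s; then $A\in\langle\mathcal{C}\rangle\in\mathcal{B}$. I claim $\langle\mathcal{C}\rangle\subseteq\langle V_1,\dots,V_N\rangle$: every member of $\mathcal{C}$ lies in some $V_m$, so any $S\in\langle\mathcal{C}\rangle$ satisfies $S\subseteq\bigcup_{m}V_m$; and each $V_m$ contains a member of $\mathcal{C}$ (namely $B_m$ if $m\le k$, and if $m>k$ the unique point of $A$ in $V_m$ is one of the remaining points $a$ with $B_a\subseteq V_m$), so $S$ meets every $V_m$. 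Hence $A\in\langle\mathcal{C}\rangle\subseteq\langle V_1,\dots,V_N\rangle\subseteq\widehat{U}$, and $\mathcal{B}$ is a network of $\mathcal{S}(X)$.

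The main obstacle is this second inequality, and more precisely the bookkeeping forced by Lemma~\ref{11}. The reason the answer is $cs\omega(X)$ rather than $n\omega(X)$ is exactly that to cover a whole non-trivial convergent sequence $S_i$ by a \emph{finite} subfamily of $\mathcal{N}$ one needs a single member absorbing a tail of $S_i$, which is the $cs$-network property; with only a network one is driven to an infinite subfamily, and $\langle\,\cdot\,\rangle$ of an infinite family is not open. The delicate verification is $\langle\mathcal{C}\rangle\subseteq\langle V_1,\dots,V_N\rangle$ when $\mathcal{C}$ is ``finer'' than $\{V_1,\dots,V_N\}$; this works because, for $m>k$, $V_m$ catches exactly one point of $A$, which (all tails of the $S_i$ having been pushed into the $B_i\subseteq V_i$ with $i\le k$) must be one of the finitely many remaining points and therefore carries a member of $\mathcal{C}$ inside $V_m$, keeping the ``meets every $V_m$'' condition alive. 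The only other things to record are the routine cardinal arithmetic $|\mathcal{N}^{<\omega}|=|\mathcal{N}|$ for infinite $\mathcal{N}$ and the fact that $\mathcal{S}(X)$-basic open sets have the form $\langle V_1,\dots,V_N\rangle$.
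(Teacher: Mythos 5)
Your proposal is correct and follows essentially the same route as the paper: the first inequality is obtained exactly as in the paper by taking unions $\bigcup\widehat{P}$ of network members, and the second uses Lemma~\ref{11} together with the family $\{\langle\mathcal{C}'\rangle:\mathcal{C}'\in\mathcal{N}^{<\omega}\}$. The only (cosmetic) difference is that the paper assumes the $cs$-network is closed under finite unions so as to get a single member $P_i$ with $S\cap V_i\subset P_i\subset V_i$ for each $i$, whereas you keep individual members for the tails and the finitely many leftover points and verify $\langle\mathcal{C}\rangle\subseteq\langle V_1,\dots,V_N\rangle$ directly, which works.
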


\begin{proof}
Assume $n\omega(\mathcal{S}(X))=\kappa$, and assume that $\mathscr{P}$ is a network of $\mathcal{S}(X)$ with $|\mathscr{P}|=\kappa$. Put $$\mathscr{B}=\{\bigcup \widehat{P}: \widehat{P}\in\mathscr{P}\}.$$ Then $\mathscr{B}$ is a $cs$-network of $X$. Indeed, take any $x\in X$ and open neighborhood $U$ of $x$. If a sequence $\{x_{n}\}$ converges to $x$ in $X$, then there exists $N\in\mathbb{N}$ such that $K=\{x\}\cup\{x_{n}: n>N\}\subset U$. Clearly, $K\in \mathcal{S}(X)$ and $K\in\langle U\rangle$, hence there exists $\widehat{P}\in\mathscr{P}$ such that $K\in\widehat{P}\subset\langle U\rangle$, hence $K\subset\bigcup\widehat{P}\subset U$. Then $\mathscr{B}$ is a $cs$-network of $X$, and $|\mathscr{B}|\leq\kappa$, thus $cs\omega(X)\leq\kappa$.

 Assume $cs\omega(X)=\kappa$, and assume that $\mathscr{C}$ is a $cs$-network of $X$ with $|\mathscr{C}|=\kappa$. Without loss of generality, we may assume that $\mathscr{C}$ is closed under the finite unions. Put $$\mathscr{D}=\{\langle\mathscr{C}^{\prime}\rangle: \mathscr{C}^{\prime}\in\mathscr{C}^{<\omega}\}.$$ We claim that $\mathscr{D}$ is a network of $\mathcal{S}(X)$. Indeed, take any $S\in \mathcal{S}(X)$ and open neighborhood $\widehat{U}$ of $S$ in $\mathcal{S}(X)$. Then there exist disjoint open sets $V_{1}, \cdots, V_{N}$ of $X$ satisfying the conditions of Lemma~\ref{11} and $S\in\langle V_{1}, \cdots, V_{n}\rangle\subset\widehat{U}$. Since $\mathscr{C}$ is a $cs$-network of $X$ and it is closed under finite unions, for each $i\in\{1, \cdots, N\}$ we can find $P_{i}\in\mathscr{C}$ such that $S\cap V_{i}\subset P_{i}\subset V_{i}$. Then $$S\in\langle P_{1}, \cdots, P_{n}\rangle\subset \langle V_{1}, \cdots, V_{n}\rangle\subset\widehat{U}.$$ Clearly, $|\mathscr{D}|=\kappa$, thus $n\omega(X)\leq\kappa$.
\end{proof}

\begin{theorem}\label{tt4}
Let $X$ be a regular space, then the following statements are equivalent:
\begin{enumerate}
\item $\mathcal{S}(X)$ is cosmic;

\smallskip
\item $\mathcal{S}(X)$ is an $\aleph_{0}$-space;

\smallskip
\item $X$ is an $\aleph_{0}$-space.
\end{enumerate}
\end{theorem}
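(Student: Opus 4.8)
The plan is to prove the cycle $(1)\Rightarrow(3)\Rightarrow(2)\Rightarrow(1)$, where the implication $(1)\Rightarrow(3)$ is essentially trivial and the substantive content lies in $(3)\Rightarrow(2)$ and $(2)\Rightarrow(1)$. Recall that an $\aleph_0$-space is a regular space with a countable $k$-network, and every $\aleph_0$-space is cosmic; conversely a cosmic space need not be an $\aleph_0$-space, so $(2)\Rightarrow(1)$ needs nothing beyond the definitions, but $(1)\Rightarrow(3)$ and $(3)\Rightarrow(2)$ both require genuine work. Throughout I would exploit the fact, already available from Lemma~\ref{16} and Theorem~\ref{18}, that convergent sequences in $\mathcal{S}(X)$ live inside countable compact metrizable subspaces of $X$, and that basic open sets of $\mathcal{S}(X)$ have the form $\langle V_1,\dots,V_N\rangle$ with the $V_i$ mutually disjoint (Lemma~\ref{11}).

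For $(3)\Rightarrow(2)$: assume $X$ is an $\aleph_0$-space. First I would observe that regularity of $X$ passes to $\mathcal{S}(X)$ (a routine check using Lemma~\ref{11}: shrink each $V_i$ to $W_i$ with $\overline{W_i}\subset V_i$, and note $\langle W_1,\dots,W_N\rangle\subset\overline{\langle W_1,\dots,W_N\rangle}\subset\langle V_1,\dots,V_N\rangle$). Since $X$ is an $\aleph_0$-space it has a countable network, hence $X$ is cosmic; but also every $\aleph_0$-space has a countable $cs$-network (indeed a countable $k$-network is a $cs$-network), so $cs\omega(X)\le\aleph_0$. Now Theorem~\ref{tt5} gives $n\omega(\mathcal{S}(X))=cs\omega(X)\le\aleph_0$, i.e.\ $\mathcal{S}(X)$ has a countable network; combined with the regularity just established this makes $\mathcal{S}(X)$ cosmic, which is $(1)$ — but I want the stronger $(2)$. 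For that, I would upgrade: by Proposition~\ref{ppp}, if $X$ has a countable $cs$-network closed under finite unions then $\mathcal{S}(X)$ has a countable $cs$-network. A cosmic space with a countable $cs$-network which is also regular and has a countable $k$-network... the cleanest route is to use the known characterization: a regular space is an $\aleph_0$-space iff it has a countable $k$-network iff it has a countable $cs$-network and is cosmic, or directly to note that for a regular space being an $\aleph_0$-space is equivalent to having a countable $cs^*$-network (Guthrie/Michael), so $cs^*\omega(\mathcal{S}(X))\le\aleph_0$ together with regularity of $\mathcal{S}(X)$ yields $(2)$.

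For $(1)\Rightarrow(3)$: assume $\mathcal{S}(X)$ is cosmic. Then $\mathcal{S}(X)$ has a countable network, so $n\omega(\mathcal{S}(X))\le\aleph_0$, and Theorem~\ref{tt5} immediately gives $cs\omega(X)\le\aleph_0$. It remains to see that $X$ is regular (then a regular space with a countable $cs$-network is an $\aleph_0$-space by the same Guthrie--Michael fact, giving $(3)$). Regularity of $X$ follows from regularity of $\mathcal{S}(X)$: the map $x\mapsto\{x\}$ embeds $X$ as a closed subspace of $\mathcal{S}(X)$ (the singletons form a closed set, being exactly $\langle X\rangle$ minus the basic opens hitting two disjoint sets — more carefully, $\mathcal{F}_1(X)$ is closed in $\mathcal{S}(X)$ and homeomorphic to $X$), and a subspace of a regular space is regular. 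Alternatively one notes $X$ is cosmic as a subspace of the cosmic space $\mathcal{S}(X)$, and cosmic spaces are by definition regular, so $X$ is a regular space with a countable $cs$-network, hence an $\aleph_0$-space.

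The main obstacle I anticipate is bookkeeping around the two slightly different notions floating in the statement — "countable network" (cosmic) versus "countable $k$-network" ($\aleph_0$-space) — and making sure the upgrade from a countable network on $\mathcal{S}(X)$ to a countable $k$-network on $\mathcal{S}(X)$ (i.e.\ from $(1)$ to $(2)$) is done correctly rather than circularly. The safe strategy is to route everything through $cs$-networks on $X$: $(1)$ gives $cs\omega(X)\le\aleph_0$ via Theorem~\ref{tt5}; this plus regularity of $X$ gives $(3)$ (that $X$ is an $\aleph_0$-space) by the standard characterization of $\aleph_0$-spaces; then $(3)$ gives a countable $cs$-network on $X$ closed under finite unions, Proposition~\ref{ppp} transports it to $\mathcal{S}(X)$, and regularity of $\mathcal{S}(X)$ together with a countable $cs$-network yields $(2)$; finally $(2)\Rightarrow(1)$ is immediate since every $\aleph_0$-space is cosmic. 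The only place demanding care is verifying regularity transfers both ways, which I would handle once and for all by exhibiting $X\cong\mathcal{F}_1(X)$ as a closed (hence regular-inherited) subspace of $\mathcal{S}(X)$ and conversely using Lemma~\ref{11} to shrink basic neighborhoods in $\mathcal{S}(X)$.
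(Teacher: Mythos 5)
Your proof is correct, but it is organized differently from the paper's. The paper disposes of the equivalence of (2) and (3) (and the trivial $(2)\Rightarrow(1)$) by citing Ntantu \cite[Theorem 1.1]{IN} — that $X$ is an $\aleph_{0}$-space iff $\mathcal{K}(X)$ is, together with heredity of $\aleph_{0}$-spaces to the subspace $\mathcal{S}(X)\subset\mathcal{K}(X)$ — and then only proves $(1)\Rightarrow(3)$, exactly as you do: Theorem~\ref{tt5} turns a countable network of $\mathcal{S}(X)$ into a countable $cs$-network of $X$, and Foged/Guthrie (\cite{F1984}) upgrades a regular space with a countable $cs$-network to an $\aleph_{0}$-space. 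Your route replaces the external citation for $(3)\Rightarrow(2)$ by an internal argument: take a countable $cs$-network of $X$ closed under finite unions, push it to $\mathcal{S}(X)$ via Proposition~\ref{ppp}, check that $\mathcal{S}(X)$ is regular (your shrinking argument via Lemma~\ref{11}, or equivalently Michael's result that $\mathcal{K}(X)$ is regular when $X$ is), and apply the same Foged/Guthrie characterization to $\mathcal{S}(X)$. This buys a more self-contained proof at the cost of an extra verification (regularity of $\mathcal{S}(X)$) that the paper never needs to make explicit. Two minor points: your parenthetical claim that ``a countable $k$-network is a $cs$-network'' is not literally true element-wise — you need to close it under finite unions (or pass through $cs^{\ast}$-networks and \cite[Lemma 2.2]{S2005}), which is what your final paragraph in effect does; and deriving regularity of $X$ from $\mathcal{F}_{1}(X)$ being closed in $\mathcal{S}(X)$ is correct but redundant, since regularity of $X$ is a standing hypothesis of the theorem.
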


\begin{proof}
By \cite[Theorem 1.1]{IN}, we only need to prove (1) $\Rightarrow$ (3). Assume that $\mathcal{S}(X)$ is cosmic, then it follows from Theorem~\ref{tt5} that $X$
has a countable $cs$-network of $\mathcal{S}(X)$. By \cite{F1984}, $X$ is an $\aleph_{0}$-space.
\end{proof}

It is easy to see that $n\omega(X)=n\omega(\mathcal{F}(X))$ for any space $X$. However, the following example shows that $n\omega(X)=n\omega(\mathcal{S}(X))$ does not hold for a space $X$.

\begin{example}
There exists a zero-dimensional cosmic space $X$ such that $\mathcal{S}(X)$ is not a cosmic space.
\end{example}

\begin{proof}
Let $X$ be the space in \cite[Example]{Kenichi Tamano}. Then $X$ is a zero-dimensional cosmic space. However, $X$ is not a $\aleph_{0}$-space since it is not a $\mu$-space. Then, it follows from Theorem~\ref{tt4} that $\mathcal{S}(X)$ is not a cosmic space.
\end{proof}

\smallskip
\section{Rank $k$-diagonal of $\mathcal{S}(X)$}
In this section, we mainly discuss the rank $k$-diagonal of $\mathcal{S}(X)$. First of all, we recall the following concept of rank $k$-diagonal. Let $A$ be a subset of a space $X$, let $\gamma$ be a family of subsets of $X$, and let
$$\mbox{st}(A, \gamma)=\bigcup\{U\in\gamma: U\cap A\not =\emptyset\}.$$ We also put $\mbox{st}^{0}(A, \gamma)=A$ and $\mbox{st}^{n+1}(A, \gamma)=\mbox{st}(\mbox{st}^{n}(A, \gamma), \gamma)$ for any $n\in\mathbb{N}$.

\begin{definition}\cite{AB2006}
A {\it diagonal sequence of rank $k$} on a space $X$, where $k\in \omega$, is a countable family $\{\gamma_{n}: n \in\omega\}$ of open coverings of $X$ such that $$\{x\}=\bigcap\{\mbox{st}^{k}(x, \gamma_{n}):n\in \omega\}$$ for each $x\in X$. We say that $X$ has a {\it rank $k$-diagonal}, where $k\in\omega$, if there is a diagonal sequence $\{\gamma_{n}: n\in\omega\}$ on $X$ of rank $k$.
\end{definition}

Obviously, each space with a rank $k+1$-diagonal has a $k$-diagonal, and rank 1-diagonal is equivalent to $G_{\delta}$-diagonal. It is well known that there exists a space with a $G_{\delta}$-diagonal such that  $\mathcal{K}(X)$ does not have a $G_{\delta}$-diagonal. Therefore, it is natural to ask the following question.

\begin{question}\label{q2}
If $X$ is any space with a rank $k$-diagonal for some $k\in\mathbb{N}$, does $\mathcal{S}(X)$ have a rank $k$-diagonal?
\end{question}

We will give a negative answer to Question~\ref{q2}, and prove that $\mathcal{F}(X)$ has a rank $k$-diagonal if $X$ is space with a rank $k$-diagonal.

\begin{theorem}\label{17}
For $k\in\mathbb{N}$, a space $X$ has a rank $k$-diagonal if and only if $\mathcal{F}(X)$ has a rank $k$-diagonal.
\end{theorem}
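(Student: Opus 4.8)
The plan is to establish both directions separately. The forward direction is the substantive one, so I would start there. Suppose $\{\gamma_n : n\in\omega\}$ is a diagonal sequence of rank $k$ on $X$. For each $n$ and each tuple of pairwise disjoint members $U_1,\dots,U_m$ of $\gamma_n$ (with $m$ ranging over $\mathbb{N}$), I would form the basic open set $\langle U_1,\dots,U_m\rangle \cap \mathcal{F}(X)$, and let $\widehat\gamma_n$ be the collection of all such sets together with enough extra open sets to make it a cover of $\mathcal{F}(X)$ — e.g.\ also throw in $\langle X\rangle\cap\mathcal F(X)$, or more carefully the sets $\langle U_1,\dots,U_m\rangle$ where the $U_i$ need not be disjoint, since for a fixed finite set $A=\{x_1,\dots,x_p\}$ one can always pick $U_i\in\gamma_n$ with $x_i\in U_i$ and these collectively cover $A$. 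Then $\{\widehat\gamma_n:n\in\omega\}$ is a countable family of open covers of $\mathcal F(X)$, and the heart of the argument is to show $\bigcap_n \operatorname{st}^k(A,\widehat\gamma_n)=\{A\}$ for each $A\in\mathcal F(X)$.

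The key computation is to understand $\operatorname{st}(\langle U_1,\dots\rangle, \widehat\gamma_n)$. The crucial observation is a ``coordinatewise'' principle: if $B$ meets $\langle U_1,\dots,U_m\rangle\cap\mathcal F(X)$ and $C$ meets it too, then $B$ and $C$ both live inside $\bigcup_i U_i$ and both meet every $U_i$; iterating, a point $y\in\operatorname{st}^k(A,\widehat\gamma_n)$ should force the existence of a point $x\in A$ with $y\in\operatorname{st}^k(x,\gamma_n)$ — roughly because each star-step in $\mathcal F(X)$ can be ``traced'' by a star-step in $X$ linking a point of the current finite set to a point of the next. So I would prove the set inclusion $\operatorname{st}^k(A,\widehat\gamma_n)\subseteq \{\, B\in\mathcal F(X): B\subseteq \bigcup_{x\in A}\operatorname{st}^k(x,\gamma_n)\,\}$, perhaps with an auxiliary statement also controlling which sets $B$ can appear (they must be ``close'' to $A$ in the Vietoris sense, not just contained in a neighborhood). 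Granting that, if $B\in\bigcap_n\operatorname{st}^k(A,\widehat\gamma_n)$ then every point of $B$ lies in $\bigcap_n\bigcup_{x\in A}\operatorname{st}^k(x,\gamma_n)$; since $A$ is finite and $X$ has a rank $k$-diagonal, $\bigcap_n\bigcup_{x\in A}\operatorname{st}^k(x,\gamma_n)=A$ (a finite union of the singletons $\{x\}$, using that a rank-$k$ diagonal sequence can be refined so that stars behave well — one standard trick is to replace $\gamma_n$ by a common refinement of $\gamma_0,\dots,\gamma_n$). Hence $B\subseteq A$. A symmetric argument, or a direct Vietoris-neighborhood argument separating $A$ from any proper subset, gives $A\subseteq B$, so $B=A$.

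For the reverse direction, $\mathcal F(X)$ embeds $X$ as a closed subspace via $x\mapsto\{x\}$ (the subspace $\mathcal F_1(X)\cong X$ is closed in $\mathcal F(X)$), and rank $k$-diagonal is inherited by subspaces, so if $\mathcal F(X)$ has a rank $k$-diagonal then so does $X$. That half is a one-line remark.

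The main obstacle I anticipate is making the star-computation in $\mathcal F(X)$ precise: a single element $\langle U_1,\dots,U_m\rangle\cap\mathcal F(X)$ of $\widehat\gamma_n$ is a rather large set, and naive bounding of its star will be too weak (containment in $\bigcup_{x\in A}\operatorname{st}^k(x,\gamma_n)$ alone does not pin down $B$ among all finite subsets). I expect to need the extra bookkeeping that each star step also preserves a Vietoris-type proximity — concretely, tracking for each point of the ``next'' finite set a witnessing point of the ``previous'' one lying in a common $\gamma_n$-member — and to invoke that the $\gamma_n$ may be assumed decreasing (refinements), so that $\operatorname{st}^k(x,\gamma_n)$ shrinks to $\{x\}$ in a controlled way. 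Once that lemma is set up, verifying conditions of the definition of a rank $k$-diagonal for $\mathcal F(X)$ is routine.
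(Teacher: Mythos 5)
Your proposal is correct and follows essentially the same route as the paper: both build the covers $\widehat\gamma_n=\{\langle U_1,\dots,U_m\rangle\cap\mathcal F(X):U_i\in\gamma_n\}$ from a refinement-decreasing diagonal sequence on $X$, trace each star-step in $\mathcal F(X)$ back to a star-chain in $X$ to get the containment of $\operatorname{st}^k(A,\widehat\gamma_n)$ in $\bigl\{B: B\subseteq\bigcup_{x\in A}\operatorname{st}^k(x,\gamma_n)\bigr\}$, use finiteness of $A$ together with the refinement assumption to collapse the intersection, and handle the opposite inclusion by the symmetric (star-reversal) argument, which is exactly the paper's Case 1/Case 2 split; the converse via the (hereditary) subspace $\mathcal F_1(X)\cong X$ is the same triviality the paper dismisses. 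No gap beyond routine write-up of the chain-tracing lemma, whose mechanism you have identified correctly.
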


\begin{proof}
Clearly, it suffices to prove the necessity. Let $\{\gamma_{n}: n\in \omega\}$ be a diagonal sequence with rank $k$ in $X$. Without loss of generality, we may assume that $\gamma_{n+1}$ refines $\gamma_{n}$ for each $n\in\omega$. For any ~$n\in\omega$, set $$\mathcal{A}_{n}=\{\langle U_{1},...,U_{m}\rangle\cap\mathcal{F}(X): U_{1},...,U_{m}\in \gamma_{n}, m\in\omega\}.$$ Obviously, $\mathcal{A}_{n}$ is an open cover of $\mathcal{F}(X)$ and $\mathcal{A}_{n+1}$ refines $\mathcal{A}_{n}$ for each $n\in\omega$. Next we prove that $\{\mathcal{A}_{n}: n\in \omega\}$ is a diagonal sequence of $\mathcal{F}(X)$ with rank $k$.

In fact, we will prove that $\{F\}=\bigcap\{\mbox{st}^{k}(F, \mathcal{A}_{n}):n\in\omega\}$ for any $F\in \mathcal{F}(X)$. Fix an arbitrary $F\in\mathcal{F}(X)$. It follows from the definition that $F\in\bigcap\{\mbox{st}^{k}(F,\mathcal{A}_{n}):n\in\omega\}$. Suppose there is $K\in \mathcal{F}(X)\setminus\{F\}$ such that $K\in \bigcap\{\mbox{st}^{k}(F, \mathcal{A}_{n}):n\in\omega\}$.

\smallskip
In order to obtain a contradiction, we divide the proof into the following two cases:

\smallskip
{\bf Case 1}: $K\backslash F\not =\emptyset$.

\smallskip
Then there is $y\in K\backslash F$, thus $y\not\in F $. Because $\{\gamma_{n}: n\in \omega\}$ is a diagonal sequence with rank $k$ in $X$, it follows that $y\not\in \bigcap\{\mbox{st}^{k}(x, \gamma_{n}): n\in\omega\}$ for each $x\in F$. Then, for each $x\in F$ there exists $n_{x}\in\omega$ such that $y\not\in \mbox{st}^{k}(x, \gamma_{n_{x}})$. Put $n_{y}=\max\{n_{x}: x\in F\}$. Then, since $\gamma_{n+1}$ refines $\gamma_{n}$ for each $n\in\omega$, we have that $\mbox{st}^{k}(x, \gamma_{n_{y}})\subset\mbox{st}^{k}(x, \gamma_{n_{x}})$ for any $x\in F$. Therefore, $y\not\in \mbox{st}^{k}(x, \gamma_{n_{y}})$ for any $x\in F$. Then any element of $\gamma_{n_{y}}$, which contains the point $y$, does not intersect with $\mbox{st}^{k-1}(x, \gamma_{n_{y}})$.

Since $K\in \bigcap\{\mbox{st}^{k}(F, \mathcal{A}_{n}):n\in\omega\}$, for any $n\in \omega$ we see that $K\in \mbox{st}^{k}(F, \mathcal{A}_{n})$, then there exists a subset $\{\widehat{\mathcal{U}_{n}(i)}: i=1, \cdots, k\}$ of $\mathcal{A}_{n}$ satisfying the following conditions:
\begin{enumerate}
\item $\widehat{\mathcal{U}_{n}(i)}\cap \mbox{st}^{i-1}(F,\mathcal{A}_{n})\not=\emptyset$ for each $i=1,\cdots, k$, where
    $\mbox{st}^{0}(F,\mathcal{A}_{n})=\{F\}$;

\smallskip

\item $\widehat{\mathcal{U}_{n}(i)}\cap \widehat{\mathcal{U}_{n}(i-1)}\not=\emptyset$ for each $i=2,\cdots, k$;

\smallskip
\item $K\in \widehat{\mathcal{U}_{n}(k)}$;

\smallskip
\item $\widehat{\mathcal{U}_{n}(i)}=\langle U_{n}(i, 1), \cdots, U_{n}(i, m_{i, n})\rangle\cap\mathcal{F}(X)$ for each $i=1, \cdots, k$, where $U_{n}(i, j)\in \gamma_{n}$ for each $1\leq j\leq m_{i, n}$.
\end{enumerate}

Because $K\in \widehat{\mathcal{U}_{n_{y}}(k)}$, without loss of generality we may assume that $y\in U_{n_{y}}(k, 1)$. From the above constructions (1) and (2), we also may assume that $$U_{n_{y}}(i, 1)\cap U_{n_{y}}(i-1, 1)\neq\emptyset$$ for each $i=1, \cdots, k$. Therefore, $U_{n_{y}}(k, 1)\cap\mbox{st}^{k-1}(x, \gamma_{n_{y}})\neq\emptyset$ for any $x\in U_{n_{y}}(1, 1)\cap F\neq\emptyset$, which is a contradiction.

\smallskip
{\bf Case 2}: $K\backslash F=\emptyset$.

Thus there is $z\in F\backslash K$, that is, $x\not =z$ for each $x\in K$. Because $\{\gamma_{n}:n\in \omega\}$ is the diagonal sequence with rank $k$ in $X$, for each $x\in K$ we can conclude that $z\not\in \bigcap\{\mbox{st}^{k}(x,\gamma_{n}):n\in\omega\}$, hence there exists $n_{x}\in\omega$ such that $z\not\in \mbox{st}^{k}(x,\gamma_{n_{x}})$, then any element of $\gamma_{n_{x}}$ containing $z$ does not intersect $\mbox{st}^{k-1}(x, \gamma_{n_{x}})$. From $K\in st^{k}(F, \mathcal{A}_{n})$ for any $n\in \omega$, it follows that there is $\widehat{\mathcal{U}_{n}(k)}\in\mathcal{A}_{n}$ such that $K\in \widehat{\mathcal{U}_{n}(k)}$ and $\widehat{\mathcal{U}_{n}(k)}\bigcap st^{k-1}(F, \mathcal{A}_{n})\not =\emptyset$. Since $\widehat{\mathcal{U}_{n}(k)}\bigcap st^{k-1}(F, \mathcal{A}_{n})\not =\emptyset$, then there is $\widehat{\mathcal{U}_{n}(k-1)}\in\mathcal{A}_{n}$ such that $ \widehat{\mathcal{U}_{n}(k)}\bigcap \widehat{\mathcal{U}_{n}(k-1)}\not =\emptyset$ and $ \widehat{\mathcal{U}_{n}(k)}\bigcap st^{k-1}(F, \mathcal{A}_{n})\not =\emptyset$. Since $k$ is finite, there is $\widehat{\mathcal{U}_{n}(2)}\in\mathcal{A}_{n}$ such that $\widehat{\mathcal{U}_{n}(2)}\bigcap st(F, \mathcal{A}_{n})\not =\emptyset$. Therefore, there exists $\widehat{\mathcal{U}_{n}(1)}\in\mathcal{A}_{n}$ such that $F\in \widehat{\mathcal{U}_{n}(1)}$ and $\mathcal{U}_{n}(2)\bigcap \mathcal{U}_{n}(1)\not =\emptyset$. Hence $F\in \mbox{st}^{k}(K, \mathcal{A}_{n})$ for any $n\in \omega$, then we will obtain a contradiction by a similar proof Case (1).
\end{proof}

The following corollary is the particular case of Theorem~\ref{17} when $k=1$.

\begin{corollary}
A space $X$ has a $G_{\delta}$-diagonal if and only if $\mathcal{F}(X)$ has a $G_{\delta}$-diagonal.
\end{corollary}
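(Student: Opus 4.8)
The plan is to obtain this corollary as the $k=1$ instance of Theorem~\ref{17}. The only auxiliary ingredient is the standard equivalence, already recorded in the paragraph before Question~\ref{q2}, that a space has a rank $1$-diagonal if and only if it has a $G_{\delta}$-diagonal. For that equivalence: given a diagonal sequence $\{\gamma_{n}:n\in\omega\}$ of rank $1$ on a space $Y$, the sets $U_{n}=\bigcup\{V\times V:V\in\gamma_{n}\}$ are open in $Y^{2}$ and $\bigcap_{n}U_{n}=\Delta_{Y}$, because $(y,z)\in\bigcap_{n}U_{n}$ forces $z\in\bigcap_{n}\mbox{st}(y,\gamma_{n})=\{y\}$; conversely, starting from open sets $U_{n}$ in $Y^{2}$ with $\bigcap_{n}U_{n}=\Delta_{Y}$, for each $y$ and $n$ one chooses an open $V_{n}(y)\ni y$ with $V_{n}(y)\times V_{n}(y)\subset U_{n}$ and sets $\gamma_{n}=\{V_{n}(y):y\in Y\}$, so that $\mbox{st}(y,\gamma_{n})\subset\{z:(y,z)\in U_{n}\}$ and hence $\bigcap_{n}\mbox{st}(y,\gamma_{n})=\{y\}$.

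With this observation applied to both $X$ and $\mathcal{F}(X)$, the proof is a short chain of equivalences: $X$ has a $G_{\delta}$-diagonal $\iff$ $X$ has a rank $1$-diagonal $\iff$ (by Theorem~\ref{17}) $\mathcal{F}(X)$ has a rank $1$-diagonal $\iff$ $\mathcal{F}(X)$ has a $G_{\delta}$-diagonal. I expect no real obstacle here: all the genuine work, namely the star-combinatorics separating the cases $K\setminus F\neq\emptyset$ and $F\setminus K\neq\emptyset$, has already been carried out in the proof of Theorem~\ref{17}. Should one prefer a self-contained argument, one can instead transcribe that proof with $k=1$, where $\mbox{st}^{0}(F,\mathcal{A}_{n})=\{F\}$ and $\mbox{st}^{1}=\mbox{st}$; in that case the two cases collapse to the elementary statement that two distinct finite subsets of $X$ are separated by basic Vietoris neighbourhoods built from the covers $\gamma_{n}$, which is immediate from $\bigcap_{n}\mbox{st}(x,\gamma_{n})=\{x\}$ for each point.
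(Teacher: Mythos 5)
Your proposal is correct and matches the paper's argument: the paper also obtains this corollary simply as the case $k=1$ of Theorem~\ref{17}, using the (standard) equivalence between having a rank $1$-diagonal and having a $G_{\delta}$-diagonal, which you verify explicitly. Nothing further is needed.
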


\smallskip
However, the conclusion of Theorem~\ref{17} does not valid in the hyperspaces of finite unions of convergent sequences by the following example. First, we recall a concept.

Assume we are given two disjoint spaces $X$ and $Y$ and a continuous mapping $f: M\rightarrow Y$ defined on a closed subset $M$ of the space $X$. Let $E$ be an equivalence relation on the sum $X\oplus Y$ corresponding to the decomposition of $X\oplus Y$ into the one-point sets $\{x\}$, where $x\in X\setminus M$, and sets of the form $\{y\}\cup f^{-1}(y)$, where $y\in Y$. The quotient space $(X\oplus Y)/E$ is call the {\it adjunction space} \cite{E1989} determined by $X, Y$ and $f$ and is denoted by $X\bigcup_{f}Y$.

\begin{example}\label{15}
There exists a space $X$ which has a rank $2$-diagonal, but the space $\mathcal{S}(X)$ does not have $G_{\delta}$-diagonal.
\end{example}

\begin{proof}
Let $X=X_{1}\bigcup X_{2}$, where $X_{1}=\mathbb{R}\times (\{0\}\cup\{1/n: n\in \mathbb{N}\}), X_{2}=\mathbb{R}\times \{-1\}.$  Now $X$ is endowed with a topology as follows: for any $n\in \mathbb{N}$, $\mathbb{R}\times \{1/n\}$ is exactly a copy of $\mathbb{R}^{*}$, where $\mathbb{R}^{*}$ is the real line $\mathbb{R}$ equipped with the following topology:

\smallskip
(a) Each point of the irrational set $\mathbb{P}$ is an isolated point in $\mathbb{R}^{*}$;

\smallskip
(b) The neighborhood basis of each point $r\in\mathbb{Q}$ is the following family $$\{\{r\}\cup (r-\varepsilon,r+\varepsilon)\cap \mathbb{P}:\varepsilon >0\}.$$

For each $p\in \mathbb{R}\times \{0, -1\}$, let $\{N(p, \varepsilon): \varepsilon>0\}$ be the neighborhood base of $p$ in $X$, where

\smallskip
 \begin{enumerate}
\item if $p=(x, 0) \mbox{ and }x\in \mathbb{Q}$, then $$N(p, \varepsilon)=\{p\}\cup \{(x',y')\in X: 0< y'<|x'-x|,|x'-x|<\varepsilon\};$$
\item if $p=(x, 0) \mbox{ and }x\in \mathbb{P}$, then $$N(p, \varepsilon)=\{p\}\cup \{(x,y')\in X: 0< y'<\varepsilon\};$$
\item if $p=(x,-1) \mbox{ and }x\in \mathbb{Q}$, then $$N(p, \varepsilon)=\{p\}\cup \{(x',y')\in X: |x'-x|< y'<\varepsilon\};$$
\item if $p=(x,-1) \mbox{ and }x\in \mathbb{P}$, then $N(p, \varepsilon)=\{p\}$.
\end{enumerate}

 \smallskip
Define the function f: $X_{2}\rightarrow \mathbb{R}^{*},$ where $f((x, -1))=x, (x, -1)\in X_{2}$. Clearly, $f$ is continuous. Indeed, take any $(x, -1)\in X_{2}$. If $x\in\mathbb{P}$, then $f$ is continuous at $(x, -1)$ since $\{x\}$ and $\{(x, -1)\}$ are open in $\mathbb{R}^{*}$ and $X_{2}$ respectively; if $x\in\mathbb{Q}$, then $N((x, -1), \varepsilon)\cap X_{2}=\{(x, -1)\}$ for any $\varepsilon<1$, hence $f$ is continuous at $(x, -1)$. Now let $Z=X\bigcup_{f}\mathbb{R}^{*}$ be the adjuction space. Obviously, $Z$ is a $T_{2}$-space. We claim that $Z$ has a rank $2$-diagonal.

\smallskip
 {\bf Claim:} The space $Z$ has a rank 2-diagonal.

\smallskip
Indeed, for any $n\in\mathbb{N}$, let $$\mathcal{A}_{n}=\left\{\left((x-\frac{1}{n}, x+\frac{1}{n})\cap (\mathbb{P}\setminus\{x\})\right)\times\{\frac{1}{m}\}: x\in \mathbb{Q}, m\in\mathbb{N}\right\}\cup\left\{\{(x, \frac{1}{m})\}: x\in\mathbb{P}, m\in\mathbb{N}\right\},$$
 $$\mathcal{B}_{n}=\left\{N(p, \frac{1}{n}): p\in \mathbb{R}\times \{0\}\right\},$$
 $$\mathcal{V}_{n}=\left\{\left((x-\frac{1}{n}, x+\frac{1}{n})\cap (\mathbb{P}\setminus\{x\})\right)\cup \left(N(p, \frac{1}{2n})\setminus\{p\}\right): p=(x, -1), x\in\mathbb{Q}\right\}\cup\left\{\{x\}: x\in\mathbb{P}\right\}$$and
 $$\mathcal{W}_{n}=\mathcal{A}_{n}\cup\mathcal{B}_{n}\cup\mathcal{V}_{n}.$$ Then $\mathcal{W}_{n}$ is a rank 2-diagonal sequence of $Z$. In fact, take any $p, q\in Z, p\not=q$. In order to prove that $p\not\in\mbox{st}^{2}(q, \mathcal{W}_{N})$ for some $N\in\mathbb{N}$, we divide the proof into the following three cases.

\smallskip
{\bf case1:} $p, q\in \mathbb{R}^{*}$.

\smallskip
Since $p\neq q$, there is $n\in\mathbb{N}$ such that $|p-q|>\frac{1}{n}$, thus it is easy to verify that $p\not\in\mbox{st}^{2}(q, \mathcal{W}_{4n})$.

\smallskip
{\bf case 2:} $p, q\in X_{1}$.

\smallskip
Assume that there exists at least one point of $\{p, q\}$ which does not belong to $\mathbb{R}\times \{0\}$. Without loss of generality, we may assume that $p\not\in\mathbb{R}\times \{0\}$, then there exits $m\in\mathbb{N}$ such that $p\in\mathbb{R}\times \{\frac{1}{m}\}$. If $q\not \in\mathbb{R}\times \{\frac{1}{m}\}$, then from the above construction it follows that $q\not\in\mbox{st}^{2}(p, \mathcal{W}_{m})\subset\mathbb{R}\times \{\frac{1}{m}\},$ thus $p\not\in\mbox{st}^{2}(q, \mathcal{W}_{m});$ If $q\in\mathbb{R}\times \{\frac{1}{m}\}$, then let $p=(x_{1}, \frac{1}{m}), q=(x_{2}, \frac{1}{m})$, hence there exists $m_{1}\in\mathbb{N}$ such that $|x_{1}-x_{2}|>\frac{1}{m_{1}}$. Therefore, $p\not\in\mbox{st}^{2}(p, \mathcal{W}_{4m_{1}})$.

Put $p, q\in\mathbb{R}\times \{0\}$, and let $p=(y_{1}, 0), q=(y_{2}, 0)$. Then there is $m_{2}\in\mathbb{N}$ such that $|y_{1}-y_{2}|>\frac{1}{m_{2}}$, hence $p\not\in\mbox{st}^{2}(p, \mathcal{W}_{4m_{2}})$.

\smallskip
{\bf case 3:} $p\in X_{1}, q\in \mathbb{R}^{*}$.

\smallskip
If there exists $n\in\mathbb{N}$ such that $p\in \mathbb{R}\times\{\frac{1}{n}\}$, then it is obvious that $$q\not\in st^{2}(p, \mathcal{W}_{n})\subset \mathbb{R}\times\{\frac{1}{n}\}.$$
Therefore, assume that $p=(x, 0)$. If $x=q\in \mathbb{Q}$, then for any $n\in\mathbb{N}$ we have $$\mbox{st}(q, \mathcal{W}_{n})=((q-\frac{1}{n}, q+\frac{1}{n})\cap (\mathbb{P}\setminus\{q\}))\cup (N(p, \frac{1}{kn}).$$ Put $$O_{n}=((q-\frac{1}{n}, q+\frac{1}{n})\cap (\mathbb{P}\setminus\{q\}))\cup N(p, \frac{1}{2n}).$$ Hence $$\mbox{st}(\mbox{st}(q, \mathcal{W}_{n}), \mathcal{W}_{n})=\mbox{st}(O_{n}, \mathcal{W}_{n}).$$ By the definition of the topology of $X$, it is easy to verify $p\not\in\mbox{st}(O_{n}, \mathcal{W}_{n})$, that is, $p\not\in\mbox{st}^{2}(q, \mathcal{W}_{n}).$  If $x=q\in \mathbb{P}$, we have that $\mbox{st}^{2}(q, \mathcal{W}_{n})=\{q\}$, hence $p\not\in\mbox{st}^{2}(q, \mathcal{W}_{n}).$

Finally we prove that $S(Z)$ does not have any $G_{\delta}$-diagonal. Suppose not, then $S(Z)$ has a $G_{\delta}$-diagonal sequence $\{\widehat{\mathcal{U}}(n):n\in N\}$. Put $$K(s)=\{(s, 0)\}\cup\{(s, 1/n): n\in \mathbb{N}\}, L(s)=K(s)\cup \{s\}$$ for each $s\in \mathbb{P}\subset \mathbb{R}$. Since $K(s), L(s)\in S(Z), $ and $K(s)\not= L(s)$, $n(s)\in \mathbb{N}$ for each $s\in \mathbb{P}$ such that $K(s)\not \in \mbox{st}(L(s), \widehat{\mathcal{U}}(n(s)))$. Because $\mathbb{R}$ is the second category, there is $n\in \mathbb{N}$ such that $\mbox{Int}(\mbox{Cl}\{s\in \mathbb{P}:n(s)=n\})\not=\emptyset$, where Int and C1 are in the sense of the usual topology. Then there is $r\in \mathbb{Q}$ such that $r\in \mbox{Int}(\mbox{Cl}\{s\in \mathbb{P}: n(s)=n\})$. Set $$R(r)=\{r\}\cup\{(r,0)\}\cup\{(r,1/n):n\in N\}.$$ Hence it follows that $R(r)\in \mathcal{S}(X)$. Since $\{\widehat{\mathcal{U}}(n): n\in \mathbb{N}\}$ is a $G_{\delta}$-diagonal sequence of ~$S(Z)$, we can conclude that there exists $\widehat{U}\in \widehat{\mathcal{U}}(n)$ such that $R(r)\in \widehat{U}$. From the definition of the topology of $X$, there exists $s\in \mathbb{P}$ such that $n(s)=n, K(s), L(s)\in \widehat{U}$, which is a contradiction with $K(s)\not \in \mbox{st}(L(s), \widehat{\mathcal{U}}(n(s)))$.
\end{proof}

If $X$ is a submetrizable space, then $\mathcal{K}(X)$ is also a submetrizable space, and since each submetrizable space has a rank $k$-diagonal for each $k\in\mathbb{N}$, it follows that $\mathcal{K}(X)$ has a rank $k$-diagonal for each $k\in\mathbb{N}$. Therefore, by example~\ref{15}, it is natural to pose the following question.

\begin{question}
For any $k>2$, if $X$ has a rank $k$-diagonal, then does $\mathcal{S}(X)$ have a rank $k$-diagonal?
\end{question}

 \smallskip
\section{Some generalized metric properties on $\mathcal{S}(X)$}
In this section, we discuss the relation of copies of $S_{2}$ and $S_{\omega}$ and the $\gamma$-space property on $\mathcal{S}(X)$.
First, we give a relation of copies of $S_{2}$ and $S_{\omega}$ of the hyperspace $\mathcal{S}(X)$.

\begin{proposition}\label{ming ti 5}
If $X$ contains a closed copy of $S_2$, then $\mathcal{S}(X)$
contains a closed copy of $S_{\omega}$.
\end{proposition}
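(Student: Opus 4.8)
The plan is to build the sequential fan out of the two layers of the Arens space. Fix a closed copy of $S_2$ in $X$ and write it, as in the definition of the $S_2$-space, as $\{\infty\}\cup\{x_n:n\in\mathbb N\}\cup\{x_{n,m}:n,m\in\omega\}$. Since $x_n\to\infty$, the set $A:=\{\infty\}\cup\{x_n:n\in\mathbb N\}$ is a convergent sequence, hence $A\in\mathcal S(X)$; and for each $n\in\mathbb N$ and $m\in\omega$ the set
$$A^{(n)}_m:=(A\setminus\{x_n\})\cup\{x_{n,m}\}=\{\infty\}\cup\{x_i:i\neq n\}\cup\{x_{n,m}\}$$
is again a member of $\mathcal S(X)$ (a convergent sequence together with one extra point). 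The claim is that $Y:=\{A\}\cup\{A^{(n)}_m:n\in\mathbb N,\ m\in\omega\}$, with the subspace topology from $\mathcal S(X)$, is a closed copy of $S_\omega$, the homeomorphism sending $A$ to the apex and, for each $n$, the (reindexed) sequence $\{A^{(n)}_m\}_m$ to one spine; there are $\omega$ many spines, which is what $S_\omega$ requires.

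First I would check that for each fixed $n$ the sequence $\{A^{(n)}_m\}_m$ converges to $A$ in $\mathcal S(X)$ by verifying the conditions of Theorem~\ref{18}: condition (ii) holds since any open $U\supseteq A$ in $X$ contains a neighbourhood of $x_n$, hence contains $x_{n,m}$ for all large $m$, so $A^{(n)}_m\subseteq U$ eventually; condition (i) holds because for any strictly increasing $\{m_k\}$ the closure of $\bigcup_{k\ge l}A^{(n)}_{m_k}$ (taken in the closed subspace $S_2$, hence the same as in $X$) equals $A\cup\{x_{n,m_k}:k\ge l\}$, and these sets intersect in $A$. Next I would pin down the neighbourhood base of $A$ in $Y$: for $g\in\omega^{\mathbb N}$ the set $O_g:=\{\infty\}\cup\bigcup_n\big(\{x_n\}\cup\{x_{n,m}:m\ge g(n)\}\big)$ is open in $S_2$, hence equals $O\cap S_2$ for some open $O$ of $X$, and a direct check gives $\langle O\rangle\cap Y=\{A\}\cup\{A^{(n)}_m:m\ge g(n)\}$; conversely, by the convergence just shown, every neighbourhood of $A$ in $\mathcal S(X)$ omits only finitely many members of each spine. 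Thus the traces on $Y$ of the neighbourhoods of $A$ are exactly the basic neighbourhoods of the apex of $S_\omega$. Finally, each $A^{(n)}_m$ is isolated in $Y$: choosing an open $O'$ of $X$ with $O'\cap S_2=\{x_{n,m}\}$, the basic open set $\langle X,O'\rangle$ meets $Y$ in precisely $\{A^{(n)}_m\}$, because among the members of $Y$ only $A^{(n)}_m$ contains the point $x_{n,m}$. Since $A$ is the unique non-isolated point of $Y$, this yields the homeomorphism $Y\cong S_\omega$.

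The delicate part — and the one where the hypothesis that the copy of $S_2$ is \emph{closed} is essential — is showing $Y$ is closed in $\mathcal S(X)$, i.e. producing for each $B\in\mathcal S(X)\setminus Y$ a basic neighbourhood of $B$ disjoint from $Y$, and I expect this to be the main obstacle. I would argue by cases on how $B$ sits relative to $S_2$. If $B\not\subseteq S_2$, pick $b\in B\setminus S_2$ and (using that $S_2$ is closed) an open $O\ni b$ in $X$ with $O\cap S_2=\emptyset$; then $\langle X,O\rangle$ misses $Y$, since every member of $Y$ lies in $S_2$. If $B\subseteq S_2$ but $\infty\notin B$, then $\langle X\setminus\{\infty\}\rangle$ (legitimate since $S_2$ is closed and $T_1$) contains $B$ and misses $Y$, as $\infty$ belongs to every member of $Y$. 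Finally suppose $B\subseteq S_2$ and $\infty\in B$; here one uses that every member of $Y$ obeys the rigid pattern ``contains $\infty$, omits at most one of the $x_i$, and meets $\{x_{n,m}:n,m\in\omega\}$ in at most one point'', and separates $B$ from $Y$ according to whichever clause $B$ violates.

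Concretely, in that last case: if $B$ meets the second layer in two distinct points, a neighbourhood $\langle X,O',O''\rangle$ with $O',O''$ open in $X$ capturing those two points forces any member it contains to hit the second layer twice, hence misses $Y$; if $B$ contains some $x_i$ together with some $x_{i,m}$, the same kind of neighbourhood works with $O''$ taken as (a lift of) $\{x_i\}\cup\{x_{i,j}:j>m\}$, which avoids $A^{(i)}_m$; if $B$ omits two of the points $x_i,x_j$, then $\langle X\setminus\{x_i,x_j\}\rangle$ works since every member of $Y$ contains $x_i$ or $x_j$; and in the remaining borderline case $B=\{\infty\}\cup\{x_i:i\neq i_0\}$ one uses $\langle O\rangle$ with $O$ open in $X$ and $O\cap S_2=S_2\setminus(\{x_{i_0}\}\cup\{x_{i_0,m}:m\in\omega\})$. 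Verifying that these subcases exhaust the possibilities and that the displayed sets are genuinely open in $X$ (this is exactly where closedness of $S_2$ and $T_1$-ness are invoked) is routine bookkeeping; granting it, $Y$ is the required closed copy of $S_\omega$ in $\mathcal S(X)$.
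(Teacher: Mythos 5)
Your proposal is correct, and it rests on the same basic idea as the paper's proof: build a fan in $\mathcal{S}(X)$ whose apex is the convergent sequence $A=\{\infty\}\cup\{x_n:n\in\mathbb{N}\}$ and whose $n$-th spine is obtained by adjoining one point of the $n$-th row of the copy of $S_2$. The differences are real but local. The paper keeps the apex set intact in every spine member, using $K_i(n)=K\cup\{y_i(n)\}$; since every member of its fan then contains all of $K$, closedness becomes a two-case argument (either $K\setminus H\neq\emptyset$, or $K\subset H$ and $H$ meets the second layer twice), carried out inside $\mathcal{S}(F)$ after noting that $\mathcal{S}(F)$ is closed in $\mathcal{S}(X)$ when $F$ is closed in $X$; and instead of computing a neighbourhood base at the apex it cites the standard criterion for recognizing $S_\omega$ (isolated points, spine convergence, and discreteness of the bounded families $\{K_i(n): i\le f(n)\}$). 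You instead delete $x_n$ from the $n$-th spine members, $A^{(n)}_m=(A\setminus\{x_n\})\cup\{x_{n,m}\}$, which is why your closedness step needs the longer case analysis; your enumeration is in fact exhaustive (members of $Y$ contain $\infty$, omit at most one $x_i$, and meet the second layer in at most one point, and every $B$ violating the pattern falls under one of your four subcases), and each separating Vietoris neighbourhood you name does the job. Your verification of the $S_\omega$ structure via lifts $O$ of the open sets $O_g$ of $S_2$, giving $\langle O\rangle\cap Y=\{A\}\cup\{A^{(n)}_m:m\ge g(n)\}$, together with spine convergence checked through Theorem~\ref{18}, is a sound self-contained substitute for the paper's appeal to the cited characterization: it exhibits the trace topology at the apex explicitly. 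In short, the paper's choice of spine members buys a shorter closedness argument; yours costs some bookkeeping there but makes the homeomorphism with $S_\omega$ completely explicit.
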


\begin{proof}
Let $$F=\{y\}\cup\{y_n: n\in \mathbb{N}\}\cup\{y_i(n): i, n\in\mathbb{N}\}$$ be a
closed copy of $S_2$ in $X$, where $y_i(n)\to y_n$ as $i\to\infty$
for each $n\in \mathbb{N}$, $y_n\to y$ as $n\to\infty$ and for any
$f\in \mathbb{N}^\mathbb{N}$, $\{y_i(n): i\leq f(n), n\in
\mathbb{N}\}$ is discrete.

Let $K=\{y\}\cup \{y_n: n\in \mathbb{N}\}$, and for each $i,
n\in\mathbb{N}$, let $K_i(n)=\{y_i(n)\}\cup K$. Obviously, all $K,
K_i(n)\in \mathcal{S}(X)$, and $K_i(n)\to K$ as $i\to\infty$ for
each $n\in\mathbb{N}$. Let $$\mathcal{A}=\{K\}\cup\{K_i(n): i,
n\in\mathbb{N}\}.$$ We claim that $\mathcal{A}$ is a closed copy of $S_\omega$
in $\mathcal{S}(X)$.

Indeed, since $F$ is closed in $X$, it follows that $\mathcal{S}(F)$
is closed in $\mathcal{S}(X)$. In order to prove that
$\mathcal{A}$ is closed, it suffices to show that $\mathcal{A}$ is closed in $\mathcal{S}(F)$.

For $H\in \mathcal{S}(F)\setminus \mathcal{A}$, we need to find a neighborhood
$\widehat{U}$ of $H$ in $\mathcal{S}(F)$ such that $\widehat{U}\cap
\mathcal{A}=\emptyset$. We consider the following two cases.

\smallskip
{\bf Case 1} $K\setminus H\neq\emptyset$.

\smallskip
Pick $z\in K\setminus H$, and for each $x\in H$, take an open
neighborhood $V_x$ of $x$ in $X$ such that $z\notin V_x$. Then there
are finitely many $V_{x_i}(i\leq k)$ such that $H\subset
 \bigcup_{i\leq k}V_{x_i}$. Let $\widehat{U}=\langle V_{x_1}, ...,
 V_{x_k}\rangle$. Then it is easy to check that $\widehat{U}\cap
 \mathcal{A}=\emptyset$.

\smallskip
{\bf Case 2} $K\subset H$.

\smallskip
Since $H\notin \mathcal{A}$, it follows that $|H\cap\{y_i(n): i, n\in
\mathbb{N}\}|\geq 2$. Pick $z_1, z_2\in H\cap\{y_i(n): i, n\in
\mathbb{N}\}$, and let $\widehat{U}=\langle F, \{z_1\},
\{z_2\}\rangle$. Then $\widehat{U}$ is an open neighborhood of $H$ in $\mathcal{S}(F)$ and
$\widehat{U}\cap \mathcal{A}=\emptyset$.

Therefore, $\mathcal{A}$ is closed in $\mathcal{S}(X)$.

\smallskip
Next we prove that $\mathcal{A}$ is a copy of $S_\omega$. Indeed, since $K_{i}(n)\in\langle F, \{y_{i}^{n}\}\rangle$ and $\mathcal{A}\cap \langle F, \{y_{i}^{n}\}\rangle=\{K_{i}(n)\}$ for each $i, n\in\mathbb{N}$, it follows that each point $K_{i}(n)$ is an isolated point in $\mathcal{A}$.
From \cite[page 48 of Example 1.8.7]{linbook}, it suffices to prove that, for any $f\in
\mathbb{N}^\mathbb{N}$, the set $\mathcal{B}=\{K_i(n): n\in \mathbb{N}, i\leq
f(n)\}$ is discrete in $\mathcal{S}(F)$. Since $\mathcal{A}$ is closed in
$\mathcal{S}(X)$, we only show that $\mathcal{B}$ is discrete in $\mathcal{A}$. Take an
arbitrary $H\in \mathcal{A}$. Then it suffices to find a neighborhood
$\widehat{U}$ of $H$ in $\mathcal{S}(F)$ such that $\widehat{U}$ intersects at most one element of
$\mathcal{B}$.
If $H\not\in \mathcal{B}$, then let $U=F\setminus \{y_i(n): n\in \mathbb{N},
i\leq f(n)\}$. Obviously, $U$ is open in $F$, thus put
$\widehat{U}=\langle U\rangle$. It is easy to see that $\widehat{U}\cap
\mathcal{B}=\emptyset$.  Now assume $H\in \mathcal{B}$, then $H=K\cup\{y_i(n)\}$ for
some $i, n\in\mathbb{N}$. Let $\widehat{U}=\langle F,
\{y_i(n)\}\rangle$. Hence it is easy to see that $|\widehat{U}\cap
\mathcal{B}|=1$.
\smallskip
In a word, $\mathcal{A}$ is a closed copy of $S_\omega$ in $\mathcal{S}(X)$.
\end{proof}

The following theorem generalizes a result in \cite{LF}.

A space $(X, \tau)$ is a {\it $\gamma$-space} if  there exists a function
$g: \omega\times X\to \tau_X$ such that (i) $\{g(n, x):
n\in\omega\}$ is a base at $x$ and $g(n+1, x)\subset g(n, x)$ for any $n\in\omega$; (ii) for each $n\in\omega$ and $x\in
X$, there exists $m\in\omega$ such that $y\in g(m, x)$ implies $g(m,
y)\subset g(n, x)$.

\begin{theorem}
A space $X$ is a $\gamma$-space if and only if $\mathcal{S}(X)$ is a
$\gamma$-space.
\end{theorem}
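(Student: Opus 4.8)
The plan is to prove the two implications separately. The forward implication is immediate: the assignment $x\mapsto\{x\}$ is a homeomorphism of $X$ onto the subspace $\{\{x\}:x\in X\}$ of $\mathcal{S}(X)$ (a basic open set $\langle U_1,\dots,U_r\rangle$ meets this subspace in the set corresponding to $\bigcap_iU_i$), and being a $\gamma$-space is hereditary: if $g$ is a $\gamma$-function on a space $Z$ and $Y\subseteq Z$, then $(n,y)\mapsto g(n,y)\cap Y$ is a $\gamma$-function on $Y$. Hence if $\mathcal{S}(X)$ is a $\gamma$-space, then so is $X$.

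For the converse, fix a $\gamma$-function $g\colon\omega\times X\to\tau_X$; we may assume $g(n+1,x)\subseteq g(n,x)$, and using (ii) we fix for each $(n,x)$ an integer $M(n,x)\ge n$ such that $z\in g(m,x)$ and $m\ge M(n,x)$ imply $g(m,z)\subseteq g(n,x)$. Since $X$ is Hausdorff, $\bigcap_m g(m,x)=\{x\}$ for every $x$, so for distinct points $x,y$ there is $D(x,y)$ with $g(m,x)\cap g(m,y)=\emptyset$ for all $m\ge D(x,y)$ (in particular $X$, hence $\mathcal{S}(X)$ by Theorem~\ref{mingti 7}, is first-countable, but we will read this off the construction). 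Now for $A=\bigcup_{i=1}^kS_i\in\mathcal{S}(X)$ written in the standard form and $m\in\omega$, let $C_m(A)$ be the finite set of points of $A$ consisting of $s_1,\dots,s_k$ together with every term $s_{ij}$ that has already left $g(m,s_i)$, and put
$$g_{\mathcal{S}}(m,A)=\big\langle\,\{g(m,c):c\in C_m(A)\}\,\big\rangle.$$
Because the tail of each $S_i$ eventually lies in $g(m,s_i)$, one has $A\in g_{\mathcal{S}}(m,A)$, so $g_{\mathcal{S}}(m,A)$ is an open neighbourhood of $A$; the sets $g_{\mathcal{S}}(m,A)$ need not decrease in $m$, but passing to the initial intersections $\bigcap_{l\le m}g_{\mathcal{S}}(l,A)$ repairs this without affecting anything below.

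Two things must then be checked. First, $\{g_{\mathcal{S}}(m,A)\}_m$ is a neighbourhood base at $A$: given an open neighbourhood of $A$, Lemma~\ref{11} shrinks it to some $\langle V_1,\dots,V_N\rangle$ with the $V_l$ pairwise disjoint, $s_i\in V_i$ for $i\le k$, and each $V_l$ ($l>k$) meeting $A\setminus\bigcup_{j\le k}V_j$ in a single point; one then takes $m$ beyond each $M(n_i,s_i)$ (where $g(n_i,s_i)\subseteq V_i$), beyond the finitely many thresholds that force the balls about the singled-out points and about the finitely many terms lying outside $g(M(n_i,s_i),s_i)$ into the appropriate $V_l$, and uses the defining property of $M$ to conclude $g(m,c)\subseteq V_{l(c)}$ for every $c\in C_m(A)$, whence $g_{\mathcal{S}}(m,A)\subseteq\langle V_1,\dots,V_N\rangle$. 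Second, $g_{\mathcal{S}}$ satisfies condition (ii): given $n$ and $A$, one chooses $m$ past $M(n,c)$ for each of the finitely many centres $c\in C_n(A)$, past the separation constants $D(c,c')$, and past a bounded number of further thresholds obtained by iterating the property of $M$; then for $B\in g_{\mathcal{S}}(m,A)$ and $C\in g_{\mathcal{S}}(m,B)$ one obtains $g_{\mathcal{S}}(m,B)\subseteq g_{\mathcal{S}}(n,A)$ by tracing each point of $C$ through a centre of $B$ and then a centre of $A$ (a centre of $A$ may be a term $s_{ij}$, handled by one more application of the property of $M$ towards $s_i$), chaining the inclusions so produced.

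The step I expect to be the real obstacle is the "meets every coordinate" half of this last verification: one needs that whenever $B\in g_{\mathcal{S}}(m,A)$ and $c\in C_n(A)$, the nearby set $C$ meets $g(n,c)$. A witness $b\in B\cap g(m,c)$ is available, and if $b$ is itself a centre of $B$ then $C$ meets $g(m,b)\subseteq g(n,c)$ and we are done; the trouble is when $b$ is a tail term of a convergent sequence $T_l$ inside $B$, since then $C$ need only meet the ball $g(m,t_l)$ about the limit $t_l$ of $T_l$, and there is no a priori reason for $g(m,t_l)$ to lie in $g(n,c)$ — a convergent sequence in $B$ can have stray terms in a tiny neighbourhood of a centre of $A$ with its limit elsewhere. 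Overcoming this rests on the dichotomy that a term of $B$ close to a centre $c$ of $A$ is either close to the limit of its own sequence — in which case, using property (ii) of $g$ and $\bigcap_mg(m,x)=\{x\}$, that limit $t_l$ is itself forced (for $m$ large) into a controlled neighbourhood of $c$, so $g(m,t_l)\subseteq g(n,c)$ — or else far from it, in which case it is one of the finitely many exceptional terms and hence a designated centre of $B$, so $C$ does meet its ball. Keeping the resulting bookkeeping finite depends on all the exceptional sets that arise stabilising once $m$ is large, so that only finitely many threshold choices are needed. This part leans simultaneously on Lemma~\ref{11}, the convergence characterisation of Theorem~\ref{18}, property (ii) of the source $\gamma$-function, and the Hausdorffness of $X$.
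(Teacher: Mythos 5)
Your easy direction (hereditariness of the $\gamma$-property applied to the copy $\{ \{x\}:x\in X\}$ of $X$ in $\mathcal{S}(X)$) is fine, and your construction for the hard direction has the same overall shape as the paper's: Vietoris neighbourhoods built from $g$-balls about the limit points and the finitely many "exceptional" terms, the base property checked via Lemma~\ref{11}. But the crucial step — condition (ii) for $g_{\mathcal{S}}$ — is not actually proved, and the mechanism you propose for the case you yourself flag does not work. If $b\in B\cap g(m,c)$ is a tail term of a sequence $T_l\subset B$ with $b\in g(m,t_l)$, nothing forces $g(m,t_l)\subset g(n,c)$: a $\gamma$-function is not symmetric, so $b\in g(m,t_l)$ carries no information about where $t_l$, let alone the ball $g(m,t_l)$, sits; property (ii) of $g$ only controls balls about points of $g(m,c)$ at thresholds $M(n,c)$ attached to $c$, while the point you need to control is $t_l$, which depends on $B$ and is not determined by $(n,A,m)$, so no finite family of thresholds chosen from $A$'s data alone can do the job. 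The auxiliary claim that "the exceptional sets stabilise once $m$ is large" is also false: since $\bigcap_m g(m,s_i)=\{s_i\}$, every non-limit term of $S_i$ eventually leaves $g(m,s_i)$, so $C_m(A)$ increases to all of $A$ as $m\to\infty$; the same circularity (thresholds are needed for exceptional points, but which points are exceptional depends on the scale being chosen) infects your verification that $g_{\mathcal{S}}(m,A)\subset\langle V_1,\dots,V_N\rangle$-type containments propagate, and a single "large $m$" cannot close the loop.

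The paper's proof is precisely an answer to this obstacle, and it is not a bookkeeping refinement of your scheme but a different construction: $G(m,S)$ uses scales $n_m^S$ chosen by induction on $m$ so that (a) the balls about the current centres are pairwise disjoint (condition (4)), (b) balls at the next scale about all tail points lying inside a previous ball stay inside that ball, uniformly over the infinitely many tail points, which requires \cite[Theorem 10.6]{G1984} and not just pointwise use of property (ii) (condition (2)), and (c) the $\gamma$-thresholds of the current centres are absorbed (condition (3)). Then, when verifying the $\gamma$-condition for $T\in G(p,S)$, the ball about a limit $t_j$ of $T$ is taken at $T$'s own scale $n_p^T\geq p$, is first shown (the paper's Claim~1, using (2) and (3)) to lie inside some ball $g(n_{m+1}^S,w)$ with $w$ a centre of $S$, and only then does the witness in $g(n_{m+1}^S,x)\cap g(n_p^T,t_j)$ together with the disjointness of $\{g(n_{m+1}^S,w)\}$ pin down $w=x$, yielding exactly the inclusion $g(n_p^T,t_j)\subset g(n_m^S,x)$ that your dichotomy tries to get for free. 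Without an analogue of this disjointness-plus-nesting machinery (and with your $g_{\mathcal{S}}(m,B)$ using the raw scale $m$ rather than scales attached to $B$), the "meets every coordinate" half of condition (ii) remains unproved, so the proposal as written has a genuine gap at the heart of the theorem.
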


\begin{proof}
Assume that $X$ is a $\gamma$-space. Hence there exists a
$g$-function $g: \omega\times X\to \tau_X$ satisfying the definition
above. We also may assume that $g(n+1, x)\subset g(n, x)$ for each
$x\in X$ and $n\in \omega$. Define $G: \omega\times
\mathcal{S}(X)\to \tau_{\mathcal{S}(X)}$ by $$G(0, S)=\langle g(n_{0}^{S}, s_1),..., g(n_{0}^{S}, s_{k}), g(n_{0}^{S}, y(S, 0)), \cdots, g(n_{0}^{S}, y(S, r_{0}^{S}))\rangle$$ such that the family of open subsets $\{g(n_{0}^{S}, s_i): 1\leq i\leq k\}$ are disjoint, where $r_{0}^{S}\in\mathbb{N}$ and $$S\setminus\bigcup_{i=1}^{k}g(0, s_{i})=\{y(S, 0), \cdots, y(S, r_{0}^{S})\}.$$ For any $m\geq 1$, assume that
\begin{eqnarray}
G(m, S)&=&\langle g(n_{m}^{S}, s_1),..., g(n_{m}^{S}, s_{k}), g(n_{m}^{S}, y(S, 0)), \cdots, g(n_{m}^{S}, y(S, r_{0}^{S})),\nonumber\\
&& \ \ \ \ \ \ \ \ \ \ \ \ \ \ \ \ \ \ \ \ \ \ \ \ \ \ \ \ \ \ \ \cdots, g(n_{m}^{S}, y(S, r_{m}^{S}))\rangle\nonumber
\end{eqnarray}
satisfies the following conditions (1)-(4), where $r_{0}^{S}\leq r_{1}^{S}, \ldots, \leq r_{m}^{S}$ and $$S\setminus\bigcup_{i=1}^{k}g(n_{m}^{S}, s_{i})=\{y(S, 0), \cdots, y(S, r_{0}^{S}), \ldots, y(S, r_{m}^{S})\}.$$

\smallskip
(1) $n_{m}^{S}>n_{m-1}^{S}$.

\smallskip
(2) $\bigcup_{x\in S_{i}\cap g(n_{m-1}^{S}, s_i)}g(n_{m}^{S}, x)\subset g(n_{m-1}^{S}, s_{i})$ for each $1\leq i\leq k$ (This is possible from \cite[Theorem 10.6]{G1984}).

\smallskip
(3) For each $x\in A_{m-1}$, if $y\in g(n_{m}^{S}, x)$ then $g(n_{m}^{S}, y)\subset g(n_{m-1}^{S}, x)$ (This is possible since $X$ is a $\gamma$-space), where $$A_{m-1}=\{s_{i}: 1\leq i\leq k\}\cup\{y(S, 0), \cdots, y(S, r_{0}^{S}), \ldots, y(S, r_{m-1}^{S})\}.$$

\smallskip
(4) The family $\{\{g(n_{m}^{S}, s_i): 1\leq i\leq k\}\}\cup \{g(n_{m}^{S}, y(S, i)): i\leq r_{m-1}^{S}\}$ are disjoint.

\smallskip
For each $i\leq k$, put $S^{\prime}_{i}=S_{i}\cap g(n_{m}^{S}, s_{i})$. We construct $G(m+1, S)$ as follows.

Clearly, there exists a positive integer $N>n_{m}^{S}$ such that the family $$\{\{g(N, s_i): 1\leq i\leq k\}\}\cup \{g(N, y(S, i)): i\leq r_{m}^{S}\}$$ are disjoint. Then, for each $i\leq k$, it follows from \cite[Theorem 10.6]{G1984} that there exists a positive integer $n(i)>N$ such that $\bigcup_{x\in S^{\prime}_{i}}g(n(i), x)\subset g(n_{m}^{S}, s_{i})$. Moreover, since $X$ is a $\gamma$-space, for each $x\in A_{m}=\{s_{i}: 1\leq i\leq k\}\cup\{y(S, 0), \cdots, y(S, r_{0}^{S}), \ldots, y(S, r_{m}^{S})\}$, there exists a positive integer $m(x)>n_{m}^{S}$ such that if $y\in g(m(x), x)$ then $g(m(x), y)\subset g(n_{m}^{S}, x).$
Let $n_{m+1}^{S}=\max\left(\{n(i): 1\leq i\leq k\}\cup\{m(x): x\in A_{m}\}\right)$. Put
\begin{eqnarray}
G(m+1, S)&=&\langle g(n_{m+1}^{S}, s_1),..., g(n_{m+1}^{S}, s_{k}), g(n_{m+1}^{S}, y(S, 0)), \cdots, g(n_{m+1}^{S}, y(S, r_{m}^{S})),\nonumber\\
&& \ \ \ \ \ \ \ \ \ \ \ \ \ \ \ \ \ \ \ \ \ \ \ \ \ \ \ \ \ \ \ \ldots, g(n_{m+1}^{S}, y(S, r_{m+1}^{S}))\rangle,\nonumber
\end{eqnarray}
where $$S\setminus\bigcup_{i=1}^{k}g(n_{m+1}^{S}, s_{i})=\{y(S, 0), \cdots, y(S, r_{0}^{S}), \ldots, y(S, r_{m}^{S}), \ldots, y(S, r_{m+1}^{S})\}.$$Obviously, it easily check that $G(m+1, S)$ satisfies (1)-(4).

We claim that the function $G$
satisfies the definition above.

\smallskip
(i) For each $S=\bigcup _{n=1}^{k} S_{n}\in \mathcal{S}(X)$, the family
$\{G(m, S): m\in\mathbb{N}\}$ is a local base at
$S$ in $\mathcal{S}(X)$ and $G(m+1, S)\subset G(m, S)$ for any $m\in\omega$.

\smallskip
From the definition of $G$, we have $G(m+1, S)\subset G(m, S)$ for any $m\in\omega$. Now it only needs to prove that the family
$\{G(m, S): m\in\mathbb{N}\}$ is a local base at
$S$ in $\mathcal{S}(X)$. Indeed, take any open neighborhood $\widehat{U}$ of $S$ in $\mathcal{S}(X)$. Then there are disjoint open sets $V_{1}, \cdots, V_{k}, \cdots, V_{N}$ in $X$ which satisfy the conditions in Lemma~\ref{11} such that
$S\in\langle V_1,..., V_k, \cdots, V_{N}\rangle\subset \widehat{U}$. Obviously,
for each $1\leq i\leq  k$, there exists $n(i)$ such
that $g(j, s_{i})\subset V_i$ whenever $j\geq n(i)$; for each $x\in S\setminus\bigcup_{i=1}^{k}V_{i}$, there exist $n(x)$ and $k<i\leq N$ such
that $g(j, x)\subset V_i$ whenever $j\geq n(x)$.
Let $$r=\max\left(\{n(i): i=1, \cdots, k\}\cup \{n(x):
x\in S\setminus\bigcup_{i=1}^{k}V_{i}\}\right).$$Moreover, from \cite[Theorem 10.6]{G1984}, there exists $p\in\mathbb{N}$ such that $n_{p}^{S}>r$ and $$\bigcup_{x\in S\cap V_{i}}g(n_{p}^{S}, x)\subset V_{i}$$ for each $1\leq i\leq k$.
Then it follows that
\begin{eqnarray}
S\in G(p, S)&=&g(n_{p}^{S}, s_1),..., g(n_{p}^{S}, s_k), g(n_{p}^{S}, y(S, 0)), \cdots, g(n_{p}^{S}, y(S, r_{p}^{S}))\rangle\nonumber\\
&\subset&\langle V_1,..., V_k, \cdots, V_{N}\rangle\nonumber\\
&\subset&\widehat{U}.\nonumber
\end{eqnarray}

\smallskip
(ii) For any $S=\bigcup _{n=1}^{k} S_{n}\in \mathcal{S}(X)$ and $m\in\mathbb{N}$,
there exists $p\in\mathbb{N}$ such that $G(p, T)\subset G(m, S)$ for any $T\in G(p, S)$.

\smallskip
Take any $S=\bigcup _{n=1}^{k} S_{n}\in \mathcal{S}(X)$ and $m\in\mathbb{N}$. Since
$X$ is a $\gamma$-space, for each $$x\in A_{m+2}=\{s_{i}: i=1, \cdots, k\}\cup\{y(S, 0), \cdots, y(S, r_{0}^{S}), \ldots, y(S, r_{m}^{S}), \ldots, y(S, r_{m+2}^{S})\}$$ there exists a positive integer $m(x)>n_{m+2}^{S}$ such that $g(m(x), y)\subset g(n_{m+2}^{S},
x)$ if $y\in g(m(x), x)$. Put $p=\max\left\{m(x): x\in A_{m+2}\}\right\}.$
Take an arbitrary $T\in G(p, S)$. We prove that
$$G(p, T)\subset G(m, S).$$

Indeed, let $T=\bigcup _{n=1}^{l} T_{n}\in \mathcal{S}(X)$, where $T_{n}=\{t_{nj}\}_{j\in \mathbb{N}}\cup\{t_{n}\}$ and $t_{nj}\rightarrow t_{n}~(j\rightarrow\infty)$ for each $n\in\{1, ..., l\}$ such that $T_{t}\cap T_{n}=\emptyset$ for distinct $n, t\in\{1,...,l\}$. Then, from \cite[Lemma 2.3.1]{M}, we need to prove the following two claims.

\smallskip
{\bf Claim 1} $(\bigcup_{i=1}^{l}g(n_{p}^{T}, t_{i}))\cup (\bigcup _{j=1}^{r_{p}^{T}}g(n_{p}^{T}, y(T, j)))\subset \bigcup_{x\in A_{m}}g(n_{m}^{S}, x).$

\smallskip
For each $z\in D=\{t_{i}: 1\leq i\leq l\}\cup \{y(T, i): 1\leq i\leq r_{p}^{T}\}$, since $K\in G(p, S)$, there exists $x\in A_{m+2}$ or $r_{m+2}^{S}<j\leq r_{p}^{S}$ such that $z\in g(n_{p}^{S}, x)$ or $z\in g(n_{p}^{S}, y(S, j))$. We divide the proof into the following two cases.

\smallskip
{\bf Case 1} $z\in g(n_{p}^{S}, x)$ for some $x\in A_{m+2}$.

Clearly, $z\in g(n_{p}^{S}, x)\subset g(p, x)\subset g(m(s_{j}), x)$. Hence $g(m(s_{j}), z)\subset g(n_{m+2}^{S}, x)$, then $$g(n_{p}^{T}, z)\subset g(p, z)\subset g(m(s_{j}), z)\subset g(n_{m+2}^{S}, x)\subset g(n_{m+1}^{S}, y)\subset g(n_{m}^{S}, y)$$ for some $y\in A_{m}$.

\smallskip
{\bf Case 2} $z\in g(n_{p}^{S}, y(S, j))$ for some $r_{m+2}^{S}<j\leq r_{p}^{S}$.

Since $r_{m+2}^{S}<j\leq r_{p}^{S}$, there exists $m+2\leq v< p$ such that $r_{v}^{S}<j\leq r_{v+1}^{S}$. Then $z\in g(n_{p}^{S}, y(S, j))\subset g(n_{v+1}^{S}, y(S, j))$, hence $$z\in g(n_{v+1}^{S}, y(S, j))\subset g(n_{v}^{S}, s_{h})\subset g(n_{m+2}^{S}, s_{h})$$ for some $h\leq k$. Therefore, $g(n_{m+2}^{S}, z)\subset g(n_{m+1}^{S}, s_{h})$, then$$g(n_{p}^{T}, z)\subset g(p, z)\subset g(n_{m+2}^{S}, z)\subset g(n_{m}^{S}, s_{h}).$$

\smallskip
{\bf Claim 2} For each $x\in A_{m}$, there exists $y\in D$ such that $g(n_{p}^{T}, y)\subset g(n_{m}^{S}, x)$.

\smallskip
Indeed, since $T\in G(p, S)$, then there exists $j\leq l$ such that $T_{j}\cap g(n_{p}^{S}, x)\neq\emptyset$, hence $T_{j}\cap g(n_{m+1}^{S}, x)\neq\emptyset$. By the proofs of Cases 1 and 2, we easily see that $g(n_{p}^{T}, t_{j})\subset g(n_{m+1}^{S}, w)$ for some $w\in A_{m}$. If $T_{j}\cap g(n_{p}^{T}, t_{j})\cap g(n_{p}^{S}, s_{i})\neq\emptyset$, then $T_{j}\cap g(n_{p}^{T}, t_{j})\cap g(n_{m+1}^{S}, s_{i})\neq\emptyset$, hence $g(n_{p}^{T}, t_{j})\subset g(n_{m+1}^{S}, x)\subset g(n_{m}^{S}, x)$ since the family $\{g(n_{m+1}^{S}, w): w\in A_{m}\}$ is disjoint. Now we assume that $T_{j}\cap g(n_{p}^{T}, t_{j})\cap g(n_{p}^{S}, x)=\emptyset$ for any $1\leq j\leq l$. Then there exist $1\leq j\leq l$ and $1\leq h\leq r_{p}^{T} $ such that $y(T, h)\in T_{j}\cap g(n_{p}^{S}, x)$, hence $g(n_{p}^{S}, y(T, h))\subset g(n_{m}^{S}, x)$.

\smallskip
Therefore, it follows from Claims 1 and 2 that $\mathcal{S}(X)$ is a $\gamma$-space.
\end{proof}

 \smallskip
\section{Open questions}
In this section, we pose some open questions on $\mathcal{S}(X)$.

In \cite{LX}, the authors proved that a space $X$ is a semi-stratifiable space if and only if $\mathcal{F}(X)$ is a semi-stratifiable space. Therefore, we have the following question.

\begin{question}
If $X$ is a semi-stratifiable space, is $\mathcal{S}(X)$ a semi-stratifiable space?
\end{question}

In \cite{SF}, the authors proved that $X^{2}$ is monotonically normal if and only if $\mathcal{F}(X)$ is monotonically normal. Therefore, it natural to pose the following question. We conjecture this question is negative.

\begin{question}
If $X^{2}$ is monotonically normal, is $\mathcal{S}(X)$ monotonically normal?
\end{question}

By Theorem~\ref{tt4}, we have the following question.

\begin{question}
If $\mathcal{S}(X)$ is a $\sigma$-space, is $\mathcal{S}(X)$ an $\aleph$-space?
\end{question}

In \cite{LF}, the authors proved that $X$ has a base of countable order (resp. $W_{\delta}$-diagonal, point-regular base) if and only if $\mathcal{F}(X)$ has a base of countable order (resp. $W_{\delta}$-diagonal, point-regular base). Hence, it is interesting to consider the following question.

\begin{question}
If $X$ has a base of countable order (resp. $W_{\delta}$-diagonal, point-regular base), does $\mathcal{S}(X)$ have a base of countable order (resp. $W_{\delta}$-diagonal, point-regular base)?
\end{question}

In \cite{LF}, the authors also proved that $X$ is a Nagata-space if and only if $\mathcal{F}(X)$ is a Nagata-space. Hence, it is natural to pose the following question.

\begin{question}
If $X$ is a Nagata-space, is $\mathcal{S}(X)$ a Nagata-space?
\end{question}


\begin{thebibliography}{99}
\bibitem{AB2006} A.V. Arhangel'skii, R.Z. Buzyakova, {\it The rank of the diagonal and submetrizability}, Comment. Math. Univ. Carolinae, {\bf 47(4)} (2006): 585--597.

\bibitem{E1989} R. Engelking, {\it General topology} (revised and completed edition), Heldermann Verlag, Berlin, 1989.

\bibitem{DPR} M. David, P.C. Patricia, P.M. Roberto, {\it Cardinal functions of the hyperspace of convergent sequences}, Math. Slovaca, {\bf 68} (2018): 431--450.

\bibitem{SF} S. Fisher, P. Gartside. T. Mizokami, N. Shimane, {\it Near metric properties of hyperspaces}, Topol. Proc., {\bf 22} (1997): 197--211.

\bibitem{F1984} L. Foged, {\it Characterizations of $\aleph$-spaces}, Pacific J. Math., {\bf 110} (1984): 59--63.

\bibitem{GO} S. Garc\'{\i}a-Ferreira, Y.F. Ortiz-Castillo, {\it The hyperspace of convergent sequences}, Topol. Appl., {\bf 196} (2015): 795--804.

\bibitem{CG} C. Good, M. Sergio, {\it Symmetric products of generalized metric spaces}, Topol. Appl., {\bf 206} (2016): 93--114.

\bibitem{G1984} G. Gruenhage, {\it Generalized metric spaces}, In: K. Kunen, J. E.
  Vaughan(Eds.), Handbook of set-theoretic topology, North-Holland, Amsterdam, 1984, 423--501.

\bibitem{KB} M. Kubo, {\it A note on hyperspaces by compact sets},
Mem. Osaka Kyoiku Univ., Ser. III, {\bf 27} (1978): 81--85.

\bibitem{LF} F. Lin, R. Shen, C. Liu, {\it Generalized metric properties on hyperspaces with the Vietoris topology}, Rocky Mountain J. Math., accpeted.

\bibitem{linbook} S. Lin, Z. Yun, {\it Generalized Metric Spaces and Mappings},
Science Press, Atlantis Press, 2017.

\bibitem{M} E. Michael,{\it Topologies on spaces of subsets},
Tran. Amer. Math. Soc., {\bf 71(1)} (1951): 152--182.

\bibitem{TM} T. Mizokami, {\it Cardinal functions on hyperspaces}, Colloq.  Math., {\bf 41(2)} (1979): 201--205.

\bibitem{TM1} T. Mizokami, {\it On hyperspaces of spaces around Moore spaces}, Houston J. Math., {\bf 22(2)} (1996): 297--306.

\bibitem{TM1997} T. Mizokami, {\it On hyperspaces of generalized metric spaces}, Topol. Appl., {\bf 76(2)} (1997): 169--173.

\bibitem{SA} S.A. Naimpally, J.F. Peters, {\it Hyperspace Topologies}, Topology with Applications: Topological Spaces via Near and Far, 2015.

\bibitem{IN} I. Ntantu, {\it Cardinal functions on hyperspaces and function
spaces}, Topol. Proc., {\bf 10(2)} (198): 357--375.

\bibitem{LX} L.X. Peng, Y. Sun, {\it A study on symmetric products of
generalized metric spaces}, Topol. Appl., {\bf 231} (2017): 411--429.

\bibitem{S2005} M. Sakai, {\it Function spaces with a countable $cs^*$-network
  at a point}, Topology Appl., {\bf 156} (2008): 117--123.

\bibitem{Kenichi Tamano} K. Tamano, {\it A cosmic space which is not a $\mu$-space}, Topol. Appl., {\bf 115} (2001): 259--263.

\bibitem{TZ} Z. Tang, S. Lin, F. Lin, {\it Symmetric products and closed
finite-to-one mappings}, Topol. Appl., {\bf 234} (2018): 26--45.
\end{thebibliography}
\end{document}